\newtheorem{Th}{Theorem}[section]
\newtheorem{Prop}[Th]{Proposition}
\newtheorem{Lm}[Th]{Lemma}
\newtheorem{Co}[Th]{Corollary}
\theoremstyle{definition}
\newtheorem{Def}[Th]{Definition}
\newtheorem{Rem}{Remark}[section]
\numberwithin{equation}{section}
\newcommand{\Tab}{\operatorname{Tab}}
\newcommand{\Aut}{\operatorname{Aut}}
\newcommand{\supp}{\operatorname{supp}}
\newcommand{\nat}{\operatorname{nat}}
\newcommand{\sgn}{\operatorname{sgn}}
\newcommand{\Tr}{\operatorname{Tr}}
\newcommand{\tr}{\operatorname{tr}}
\newcommand{\id}{\operatorname{id}}
\newcommand{\exchar}{\operatorname{ex\,Char}}
\newcommand{\Div}{\operatorname{Div}}
\date{}
\title{Indecomposable characters on direct limit of symmetric groups with diagonal embeddings}
\author{N.~Nessonov,
	\and
	N.~T.~S.~Ngo}
\begin{document}
	
	\maketitle
	
	\begin{abstract}
		In this paper we obtain the complete description of all indecomposable characters (central positive-definite functions) of inductive limits of the symmetric groups under block diagonal embedding.
		As a corollary we obtain the full classification of the isomorphism classes of these inductive limits.
		
		\medskip
		
		\emph{Keywords:} Infinite symmetric group; Character; Factor representation.
	\end{abstract}
	
	\section{Introduction}\label{intro}
	Consider the space $X=[0,1)$ with the standard Lebesgue measure $\nu$. Denote by $\Aut_0(X,\nu)$ the group of automorphisms of the space $(X,\nu)$ which preserve the measure $\nu$. In particular, one can consider finite subgroups of this group, which correspond to the so called \emph{rational rearrangements} of a $X$.
	
	Namely, each symmetric group $\mathfrak{S}_{N}$ regarded as group of bijections of the set $\mathbb{X}_N=\{0,1,2,\ldots,N-1\}$ can be embedded into $\Aut_0(X,\mu)$ in the following way. For each $\sigma\in\mathfrak{S}_{N}$ define automorphism
	$T_N(\sigma)\in\Aut_0(X,\mu)$ via the formula
	\begin{equation*}
		T_N(\sigma)(x)=\frac{Nx-[Nx]+\sigma([Nx])}{N},\;x\in[0,1).
	\end{equation*}
	In other words, the map $T_N(\sigma)$ acts on half-closed intervals $[\frac{k-1}{N},\frac{k}{N})$, $k\in\overline{1,N}$\    \ \footnote{Here we use the notation $\overline{p,q}$ for the set $\left\{p+1,p+2,\ldots q \right\}\subset \mathbb{Z}$.} via the permutation $\sigma$. It is easy to verify that $T_N$ is an injective homomorphism.
	
	In order to understand which automorphism in $T_{n_1n_2}\left(\mathfrak{S}_{n_1n_2} \right)$ coincides with $T_{n_1}(\sigma)$, let us represent each element $y$ of $\mathbb{X}_{n_1n_2}$ as $y=x_1+n_1x_2$, where $x_1\in\mathbb{X}_{n_1}$ and $x_2\in\mathbb{X}_{n_2}$. Then
	\begin{eqnarray*}
		T_{n_1}(\sigma)\left[\frac{k}{n_1}+\frac{i}{n_1n_2},\frac{k}{n_1}+\frac{i+1}{n_1n_2}\right)=
		\left[\frac{\sigma(k)}{n_1}+\frac{i}{n_1n_2},\frac{\sigma(k)}{n_1}+\frac{i+1}{n_1n_2}\right),
	\end{eqnarray*}
	where $k\in\overline{0,n_1-1}$, $i\in\overline{0,n_2-1}$. It means that $T_{n_1}(\sigma)=T_{n_1n_2}(\mathfrak{i}(\sigma))$ where the permutation $\mathfrak{i}(\sigma)\in\mathfrak{S}_{n_1n_2}$ acts as follows
	\begin{eqnarray*}
		\mathfrak{i}(\sigma)(x_1+n_1x_2)=\sigma(x_1)+n_1x_2,~\text{where}~ x_1\in\mathbb{X}_{n_1}, x_2\in\mathbb{X}_{n_2}.
	\end{eqnarray*}
	In this way we obtain a natural embedding of the symmetric group $\mathfrak{S}_{n_1}$ into $\mathfrak{S}_{n_1n_2}$ which corresponds to the inclusion $T_{n_1}(\mathfrak{S}_{n_1})\subset T_{n_1n_2}(\mathfrak{S}_{n_1n_2})$.
	
	If we identify $\mathbb{X}_{n_1n_2}$ with $\mathbb{X}_{n_1}\times\mathbb{X}_{n_2}$ using the correspondence $(x,y)\leftrightarrow x+n_1y$, then  $\mathfrak{i}(\sigma)$ acts as follows:
	\begin{equation*}
		\mathfrak{i}(\sigma)((x,y))=(\sigma(x),y),~x\in\mathbb{X}_{n_1}, y\in\mathbb{X}_{n_2}.
	\end{equation*}	
	Continuing this process and letting $N_k=n_1n_2\cdots n_k$ we obtain the infinite chain of subgroups
	\begin{equation*}
		T_{N_1}\left( \mathfrak{S}_{N_1} \right)\subset T_{N_2}\left( \mathfrak{S}_{N_2} \right)\subset \ldots\subset T_{N_k}\left( \mathfrak{S}_{N_k} \right)\subset\ldots\subset \Aut_0(X,\nu).
	\end{equation*}
	Their union $\bigcup\limits_k T_{N_k}\left(\mathfrak{S}_{N_k}\right)$ is a countable subgroup in $\Aut_0(X,\nu)$. This subgroup of $\Aut_0(X,\nu)$ is naturally isomorphic to the inductive limit $\varinjlim\mathfrak{S}_{N_k}$ which corresponds to the embedding $\mathfrak{i}(\sigma)$. Note that, in general, different sequences $\left\{N_k \right\}$ define non-isomorphic inductive limits.
	In particular, they might be simple as well as contain a nontrivial normal subgroup. In the present paper we obtain the complete description of pairs of sequences $\left\{N_k'\right\}$ and $\left\{N_k''\right\}$ for which the corresponding inductive limits are isomorphic (see Theorem \ref{isomorphclass}).
	
	For each prime number $p$ denote by $\deg_p(N_k)$ the degree of $p$ in the prime factorization $N_k=\prod_p p^{\deg_p(N_k)}$. In case when for each prime $p$ the sequence $\left\{\deg_p(N_k)\right\}_{k=1}^\infty$ is unbounded, i.e. $\lim\limits_{k\to\infty}\deg_p(N_k)=\infty$, the group $\bigcup\limits_k T_{N_k}\left(\mathfrak{S}_{N_k}\right)$ is called \emph{the group of rational rearrangements of a segment}. Denote this group as $\mathfrak{S}_\mathbb{Q}$. In particular, $\mathfrak{S}_\mathbb{Q}$ is a simple group.
	In \cite{Goryachko} the full description of {\it indecomposable characters} on $\mathfrak{S}_\mathbb{Q}$ was obtained. Recall that positive definite function $\chi$ on group $G$ is called \emph{central} or \emph{character} if it satisfies the following condition
	\begin{eqnarray*}
		\chi(gh)=\chi(hg)~\text{for all}~g,h\in G.
	\end{eqnarray*}
	In the present paper we consider only {\it normalized } characters, i.e. which equal to 1 on the identity element of $G$.
	A character $\chi$ is called \emph{indecomposable} if the unitary representation $\Pi_\chi$ of the group $G$, constructed via $\chi$ according to the Gelfand-Naimark-Segal (GNS) construction, is a factor representation. Namely, in this case the operators $\Pi_\chi(G)$ generate a factor of type ${\rm II}_1$ \cite{Takesaki_1}. This definition is equivalent to the following property: indecomposable characters are the extreme points of the simplex of all characters.
	
	An important special case of the group $\varinjlim\mathfrak{S}_{N_k}$, where $N_k=2^{M_k}$, was studied by A. Dudko \cite{Dudko}. This group is also a simple group and in \cite{Dudko} all indecomposable characters of this group were found. In  \cite{Dudko_MED} the full description of indecomposable characters was given for more general symmetric groups which act on the paths of the Bratelli's diagram. However, the results of papers \cite{Dudko}, \cite{Dudko_MED} and \cite{Goryachko} did not cover the case of an arbitrary sequence $\left\{N_k\right\}$. In the present paper we obtain the description of all characters on groups $\varinjlim\mathfrak{S}_{N_k}$ without any additional conditions on the sequences  $\left\{N_k\right\}$.
	
	\subsection{The inductive limit of symmetric groups}\label{group_def}
	In this subsection we define the group $\mathfrak{S}_{\widehat{\mathbf{n}}}$ as an inductive limit of symmetric groups with the diagonal embedding.
	
	We regard the group $\mathfrak{S}_N$ as the group of all bijections (symmetries) of the set $\mathbb{X}_N=\{0,1,2,\ldots,N-1\}$. We identify $\mathbb{X}_{NM}$ with $\mathbb{X}_{N}\times\mathbb{X}_{M}$ via the isomorphism $\mathbb{X}_{NM}\ni x_1+Nx_2\leftrightarrow (x_1,x_2)\in\mathbb{X}_{N}\times\mathbb{X}_{M}$. Denote by $\mathfrak{i}$ the embedding $\mathfrak{S}_{N}$ into $\mathfrak{S}_{NM}$ defined as follows
	\begin{eqnarray}\label{embedding}
		\mathfrak{i}(\sigma)(x_1,x_2)=(\sigma(x_1),x_2).
	\end{eqnarray}
	In order to define formally the inductive limit of groups $\mathfrak{S}_N$ which correspond to the embedding \eqref{embedding}, consider the sequence $\widehat{\mathbf{n}}=\left\{n_k \right\}_{k=1}^\infty$ of positive integers, where $n_k>1$ for all $k$.
	
	Suppose that $j>k$. Put $N_k=\prod_{i=1}^{k}n_i$, $^j\!\!N_k=\frac{N_j}{N_k}$. Then $\mathbb{X}_{N_j}=\mathbb{X}_{N_k}\otimes\mathbb{X}_{(\!^j\!\!N_k)}$. Denote by $\mathfrak{i}_{k,j}$ the embedding of the group $\mathfrak{S}_{N_k}$ into $\mathfrak{S}_{N_j}$, defined as in \eqref{embedding}.
	Define the group $\mathfrak{S}_{\widehat{\mathbf{n}}}=\bigcup\limits_{k=1}^\infty\mathfrak{S}_{N_k}$ by identifying elements $\mathfrak{S}_{N_k}$ with their images under the maps $\mathfrak{i}_{k,j}$. In other words, $\mathfrak{S}_{\widehat{\mathbf{n}}}$ is the inductive limit $\varinjlim\mathfrak{S}_{N_k}$ that corresponds to the embeddings $\mathfrak{i}_{k,j}$. Denote by $\id$ the identity element of the group $\mathfrak{S}_{\widehat{\mathbf{n}}}$.
	
	\begin{Rem}\label{aut_subgroup}
		The group $\mathfrak{S}_{\widehat{\mathbf{n}}}$ is isomorphic to the subgroup $\bigcup\limits_k T_{N_k}\left(\mathfrak{S}_{N_k}\right)$ of the group $\Aut_0(X,\nu)$.
	\end{Rem}
	
	Suppose that $\sigma\in\mathfrak{S}_{N_k}$ belongs to the conjugacy class $\mathfrak{C}_{\,^k\!\mathfrak{m}}$ in $\mathfrak{S}_{N_k}$ consisting of permutations of the cycle type $\,^k\!\mathfrak{m}=(\,^k\!m_1, \,^k\!m_2,\ldots, \,^k\!m_l)$, where $\,^k\!m_i$  is the number of cycles of the length $i$ in the decomposition of $\sigma$ into disjoint cycles (see definitions in Subsection \ref{minimal_element}). Then  $\mathfrak{i}_{k,j}\left(\mathfrak{C}_{\,^k\!\mathfrak{m}}\right)\subset \mathfrak{C}_{\,^j\!\mathfrak{m}}$, where \begin{eqnarray}\label{cycle_type_evolution}
		\,^j\!\mathfrak{m}=\left(\,^j\!m_1,\,^j\!m_2,\ldots,\,^j\!m_l  \right)=\left(  \,^k\!m_1\frac{N_j}{N_k},  \,^k\!m_2\frac{N_j}{N_k},\ldots,  \,^k\!m_l\frac{N_j}{N_k} \right).
	\end{eqnarray}
	
	For each $\sigma\in\mathfrak{S}_{N_k}$ define $\supp_{_{N_k}}\,\sigma=\left\{x\in\mathbb{X}_{N_k}:\sigma x\neq x \right\}$.
	Note that if $\sigma\in \mathfrak{S}_{N_k}$, then
	\begin{eqnarray}\label{coherence_supp}
		\#\left(\supp_{_{N_j}}\,\mathfrak{i}_{k,j}(\sigma)\right)=\#\left(\supp_{_{N_k}}\,\sigma\right)\cdot \frac{N_j}{N_k}.
	\end{eqnarray}
	
	Define the multiplicative character $\sgn_{N_k}: \mathfrak{S}_{N_k}\rightarrow\{-1,1\}$ via the formula $\sgn_{N_k}\,(\sigma)=(-1)^{\kappa_{_{N_k}}(\sigma)}$. Here by $\kappa_{_{N_k}}(\sigma)$ we denote the minimal number of factors in the decomposition of $\sigma$ into the product of transpositions. It is known that if $\sigma\in\mathfrak{C}_{\,^k\!\mathfrak{m}}$, then $\kappa_{_{N_k}}(\sigma)=N_k-\sum_{p=1}^l\,^k\!m_p$. Note that
	\begin{eqnarray*}
		\kappa_{_{N_j}}(\sigma)=\frac{N_j}{N_k} \kappa_{_{N_k}}(\sigma).
	\end{eqnarray*}
	
	Therefore, $\sgn_{N_j}\,(\sigma)=\left(\sgn_{N_k}\,(\sigma)\right)^{N_j/N_k}$. This implies that for each $s\in \mathfrak{S}_{\widehat{\mathbf{n}}}$ there exists $M(s)$ such that
	\begin{eqnarray*}
		\sgn_{N_j}(s)=\sgn_{N_i}(s) ~\text{for all}~i,j>M(s).
	\end{eqnarray*}
	Thus, there exists a limit $\sgn_\infty(s)=\lim\limits_{j\to\infty}\sgn_{N_j}(s)$. The function $\sgn_\infty$ is a multiplicative character on the group $\mathfrak{S}_{\widehat{\mathbf{n}}}$.
	
	
	The following statement is immediate.
	\begin{Prop}\label{simple_subgroup}
		Denote by $\mathfrak{A}_{\widehat{\mathbf{n}}}$ the subgroup $\left\{g\in\mathfrak{S}_{\widehat{\mathbf{n}}}: \sgn_\infty(g)=1\right\}$. Then
		\begin{itemize}
			\item[{\rm (a)}] $\mathfrak{A}_{\widehat{\mathbf{n}}}$ is a simple group;
			\item[{\rm (b)}] if the sequence $\widehat{\mathbf{n}}=\left\{n_k \right\}_{k=1}^\infty$ contains infinitely many even numbers, then $\mathfrak{S}_{\widehat{\mathbf{n}}}$ is a simple group.
		\end{itemize}
	\end{Prop}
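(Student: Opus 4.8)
The plan is to realize the subgroup $\mathfrak{A}_{\widehat{\mathbf{n}}}$ as an increasing union of ordinary finite alternating groups and then apply the classical simplicity of $\mathfrak{A}_N$ for $N\geq 5$ via the standard argument that an increasing union of simple groups is simple. The only tool needed beyond this is the level-changing relation $\sgn_{N_j}(\sigma)=\left(\sgn_{N_k}(\sigma)\right)^{N_j/N_k}$ for $\sigma\in\mathfrak{S}_{N_k}$, which was established just before the statement.

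First I would prove the key identification
\begin{equation*}
\mathfrak{A}_{\widehat{\mathbf{n}}}=\bigcup_{j}\mathfrak{A}_{N_j},
\end{equation*}
where $\mathfrak{A}_{N_j}\subset\mathfrak{S}_{N_j}$ is the usual alternating group, viewed inside $\mathfrak{S}_{\widehat{\mathbf{n}}}$ through the identifications $\mathfrak{i}_{k,j}$. The inclusion $\supseteq$ is immediate: if $\sgn_{N_j}(g)=1$, then $\sgn_{N_i}(g)=1^{N_i/N_j}=1$ for every $i>j$, so $\sgn_\infty(g)=1$. For $\subseteq$, take $g\in\mathfrak{A}_{\widehat{\mathbf{n}}}$; it lies in some $\mathfrak{S}_{N_k}$, and since $\sgn_\infty(g)=\lim_{i\to\infty}\sgn_{N_i}(g)=1$ there is $j\geq k$ with $\sgn_{N_j}(g)=1$, i.e. $g\in\mathfrak{A}_{N_j}$. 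The same sign relation gives $\mathfrak{i}_{j,j+1}\left(\mathfrak{A}_{N_j}\right)\subseteq\mathfrak{A}_{N_{j+1}}$, so the union is increasing. Since $n_k>1$ for all $k$ we have $N_j\geq 2^j\to\infty$, hence $N_j\geq 5$ and $\mathfrak{A}_{N_j}$ is simple for all sufficiently large $j$.

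For part (a) I would then run the usual exhaustion argument. Let $\{\id\}\neq\mathcal{N}\trianglelefteq\mathfrak{A}_{\widehat{\mathbf{n}}}$ and pick $g\neq\id$ in $\mathcal{N}$. Choosing $j$ large enough that $g\in\mathfrak{A}_{N_j}$ and $N_j\geq 5$, the intersection $\mathcal{N}\cap\mathfrak{A}_{N_j}$ is a nontrivial normal subgroup of the simple group $\mathfrak{A}_{N_j}$, hence equals $\mathfrak{A}_{N_j}$, so $\mathfrak{A}_{N_j}\subseteq\mathcal{N}$. For any larger $i$, the subgroup $\mathcal{N}\cap\mathfrak{A}_{N_i}$ contains the nontrivial group $\mathfrak{A}_{N_j}$ and is normal in the simple group $\mathfrak{A}_{N_i}$, so it equals $\mathfrak{A}_{N_i}$. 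Thus $\mathfrak{A}_{N_i}\subseteq\mathcal{N}$ for all large $i$, and taking the union yields $\mathcal{N}=\mathfrak{A}_{\widehat{\mathbf{n}}}$.

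Part (b) reduces to (a): if $\widehat{\mathbf{n}}$ contains infinitely many even numbers, then for any $s\in\mathfrak{S}_{N_k}$ one may pick $j>k$ with $N_j/N_k$ even, whence $N_i/N_k$ is even and $\sgn_{N_i}(s)=1$ for all $i\geq j$, so $\sgn_\infty(s)=1$. Therefore $\sgn_\infty\equiv 1$, which forces $\mathfrak{A}_{\widehat{\mathbf{n}}}=\mathfrak{S}_{\widehat{\mathbf{n}}}$, and the latter is simple by (a). The only genuinely delicate point is the first step, namely verifying that the kernel $\mathfrak{A}_{\widehat{\mathbf{n}}}$ of $\sgn_\infty$ is exactly exhausted by the finite alternating groups $\mathfrak{A}_{N_j}$ under the diagonal identifications; once the level-changing relation for $\sgn$ is applied correctly there, the remaining simplicity argument is entirely routine.
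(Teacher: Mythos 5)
Your proof is correct. The paper gives no argument at all---it simply declares the proposition ``immediate''---and your write-up supplies precisely the standard argument the authors evidently intend: the identification $\mathfrak{A}_{\widehat{\mathbf{n}}}=\bigcup_j \mathfrak{A}_{N_j}$ via the level-changing relation $\sgn_{N_j}(\sigma)=\left(\sgn_{N_k}(\sigma)\right)^{N_j/N_k}$, the exhaustion argument using simplicity of $\mathfrak{A}_{N}$ for $N\geq 5$, and the reduction of (b) to (a) through $\sgn_\infty\equiv 1$ when infinitely many $n_k$ are even (which is exactly the paper's later Remark \ref{sgn_inf_rem}).
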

	
	\subsection{The main result}
	
	Let $\mathbb{M}_N(\mathbb{C})$ be the algebra of the complex $N\times N$ matrices, and let $I_N$ be the identity $N\times N$ matrix. Denote by $\Tr_N$ an ordinary trace on $\mathbb{M}_N(\mathbb{C})$. Set $\tr_N(A)=\frac{1}{N}\Tr_N(A)$, where $A\in\mathbb{M}_N(\mathbb{C})$. Define on $\mathbb{M}_N(\mathbb{C})$ an inner product $\left< A,B\right>_N=\tr_N(B^*A)$,  $A,B\in\mathbb{M}_N(\mathbb{C})$. The elements of $\sigma\in\mathfrak{S}_N$ are realized as $\{0,1\}$-matrices $\Sigma_\sigma=\left[\delta_{i\sigma(j)} \right]$ in $\mathbb{M}_N(\mathbb{C})$, where $\delta_{ij}=\left\{
	\begin{array}{rl}
		1, \text{ if } i=j,
		\\
		0,\text{ if } i\neq j.
	\end{array}
	\right.$. The operators of the left multiplication by $\Sigma_\sigma$ define on $\mathbb{M}_N(\mathbb{C})$ a unitary representation $\mathfrak{L}_N$ of the group $\mathfrak{S}_N$:
	\begin{equation*}
		\mathfrak{L}(\sigma)A=\Sigma_\sigma\cdot A,~A\in\mathbb{M}_N(\mathbb{C}).
	\end{equation*}
	Put $\varphi_N(\sigma)=\left<\mathfrak{L}(\sigma)I_N,I_N \right>=\tr_N(\Sigma_\sigma)$. Clearly, $\varphi_N$ is a character on $\mathfrak{S}_N$ and
	\begin{eqnarray}
		\varphi_N(\sigma)=1-\frac{\#\supp_N\,\sigma}{N}.
	\end{eqnarray}
	Note that if $\sigma\in\mathfrak{S}_{N_k}$, then for $j>k$ from \eqref{coherence_supp} we have
	\begin{eqnarray*}
		\varphi_{N_j}\left(\,\mathfrak{i}_{k,j}(\sigma) \right)= \varphi_{N_k}\left(\sigma \right).
	\end{eqnarray*}
	Hence, the sequence $\left\{\varphi_{N_k}\right\}$ defines a character $\chi_{\nat}$ on the inductive limit $\mathfrak{S}_{\widehat{\mathbf{n}}}=\varinjlim\mathfrak{S}_{N_k}$. In other words, for $\sigma\in\mathfrak{S}_{N_k}\subset \mathfrak{S}_{\widehat{\mathbf{n}}}$ we have
	\begin{equation}\label{chi_nat}
		\chi_{\nat}(\sigma)=\varphi_{N_k}(\sigma)=1-\frac{\#\supp_{N_k}\,\sigma}{N_k}.
	\end{equation}
	
	Our main result is the following theorem.
	
	\begin{Th}\label{main_theorem}
		Let the character $\chi^p_{\nat}$, where $p\in \mathbb{N}\cup\{0,\infty\}$, be defined as $\chi_{\nat}^p(\sigma)=\left(\chi_{\nat}(\sigma)\right)^p$ when $p\in  \mathbb{N}\cup\{0\}$ and $\chi_{\nat}^\infty(\sigma)=\left\{
		\begin{array}{rl}
			1, \text{ if } \sigma=\id,\\
			0, \text{ if } \sigma\neq\id.
		\end{array}
		\right.$
		If $\chi$ is an indecomposable character on $\mathfrak{S}_{\widehat{\mathbf{n}}}$, then there exists $p\in \mathbb{N}\cup\{0,\infty\}$ such that $\chi=\chi^p_{\nat}$ or $\chi=\sgn_\infty\cdot\chi^p_{\nat}$, where
		$\left(\sgn_\infty\cdot\chi^p_{\nat}\right)(\sigma)=\sgn_\infty(\sigma)\cdot\chi^p_{\nat}(\sigma)$, $\sigma\in\mathfrak{S}_{\widehat{\mathbf{n}}}$.
	\end{Th}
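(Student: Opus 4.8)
The first thing I would do is confirm that the functions listed in the statement really are indecomposable characters, so that the theorem is an exact classification rather than just an upper bound. For finite $p$, the identity $\chi_{\nat}(\sigma)=\tr_{N_k}(\Sigma_\sigma)$ gives $\chi^p_{\nat}(\sigma)=\tr_{N_k^p}\bigl(\Sigma_\sigma^{\otimes p}\bigr)$, so $\chi^p_{\nat}$ is the normalized trace of the $p$-th tensor power of the permutation representation; positive-definiteness and centrality are then automatic, and coherence under $\mathfrak{i}_{k,j}$ follows from \eqref{coherence_supp} exactly as it does for $\chi_{\nat}$ in \eqref{chi_nat}. For $p=\infty$ the function $\chi^\infty_{\nat}=\delta_{\id}$ is the canonical trace, whose GNS representation is the group von Neumann algebra of $\mathfrak{S}_{\widehat{\mathbf{n}}}$; since the simple subgroup $\mathfrak{A}_{\widehat{\mathbf{n}}}$ of Proposition \ref{simple_subgroup} has infinite conjugacy classes, this algebra is a type ${\rm II}_1$ factor, so $\chi^\infty_{\nat}$ is indecomposable. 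I would also record the elementary fact that since $\sgn_\infty$ is a one-dimensional (multiplicative) character, twisting by it preserves both the character and the factor property, which accounts for the second family. Thus all the listed functions are genuine points of the simplex of characters; their extremality I would deduce at the end, once they are matched to the boundary of the branching graph.

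For the converse, let $\chi$ be an arbitrary indecomposable character. Its restriction to each finite $\mathfrak{S}_{N_k}$ is a trace, hence a convex combination $\sum_{\lambda\vdash N_k}c^{(k)}_\lambda\,\chi^\lambda/\dim\lambda$ of normalized irreducible characters. Following the Vershik--Kerov ergodic method, I would realize $\chi$ as a limit along the branching graph whose vertices at level $k$ are the diagrams $\lambda\vdash N_k$ and whose edges record the decomposition of the restriction along the diagonal embedding $\mathfrak{i}_{k,k+1}$; extremality of $\chi$ then corresponds to an ergodic central (Martin-boundary) point of this graph. A convenient coordinate on conjugacy classes is the support density $s(g)=\#\supp_{N_k}g/N_k$, which by Remark \ref{aut_subgroup} is exactly the measure $\nu(\supp g)$ of the support of $g$ as a rearrangement of $X$; the scaling relation \eqref{cycle_type_evolution} shows that $s(g)$ and the finer cycle densities $\,^k\!m_i/N_k$ are invariant under the embedding, so they descend to well-defined functions on $\mathfrak{S}_{\widehat{\mathbf{n}}}$. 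The whole problem is thereby reduced to computing the boundary of this branching graph.

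The heart of the argument, and the step I expect to be the main obstacle, is a rigidity statement: for extreme $\chi$ the value $\chi(g)$ must depend only on $s(g)$, up to the factor $\sgn_\infty(g)$. I would try to prove this by exploiting the self-similarity peculiar to the diagonal embedding, namely that \eqref{cycle_type_evolution} exhibits every $g\in\mathfrak{S}_{N_k}$, viewed at level $j$, as a product $\prod_{y}g^{(y)}$ of $N_j/N_k$ mutually commuting, mutually conjugate copies supported on the disjoint blocks $\mathbb{X}_{N_k}\times\{y\}$. Combining the trace property with the triviality of the tail algebra of a factor representation (the asymptotic-independence consequence of extremality, applied as the blocks are pushed to infinity) should yield enough relations among the numbers $\chi\bigl(g^{(y_1)}\cdots g^{(y_m)}\bigr)$ to eliminate any dependence on the fine cycle structure and to collapse the would-be Thoma multi-parameters into a single one. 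In branching-graph language, the goal is to show that the only ergodic paths are those whose diagrams become asymptotically a single row — or, after the sign twist, a single column — with a bounded complementary part of size $p$, so that the boundary is precisely the half-line $\{0,1,2,\ldots,\infty\}$ doubled by $\sgn_\infty$. Controlling the asymptotics of the restriction (plethysm-type) multiplicities of the diagonal embedding well enough to rule out any wider diagram is the genuinely hard part.

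Once the boundary is identified, finishing is a matter of reading off the function. Writing $\chi(g)=\sgn_\infty(g)^{\varepsilon}F(s(g))$ with $\varepsilon\in\{0,1\}$ and $F$ determined by the boundary parameter $p$, I would compute the limit of the normalized characters along a path of parameter $p$ and recognize the result as $F(s)=(1-s)^p$; the additivity $s(g_1g_2)=s(g_1)+s(g_2)$ on disjoint-support products and the degeneration $(1-s)^p\to\delta_{s=0}$ as $p\to\infty$ confirm the identification and absorb the endpoint $p=\infty$ into the family. This gives $\chi=\chi^p_{\nat}$ or $\chi=\sgn_\infty\cdot\chi^p_{\nat}$ for some $p\in\mathbb{N}\cup\{0,\infty\}$, and, since every one of these was shown in the first step to lie in the simplex, the boundary identification simultaneously proves that each of them is indecomposable, completing the classification.
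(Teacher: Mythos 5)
Your opening step (verifying that the listed functions are characters via tensor powers of the permutation representation, and handling $p=\infty$ as the regular character of an ICC group) and your overall architecture (approximate an extreme $\chi$ by irreducible characters of $\mathfrak{S}_{N_k}$, then classify the possible limits) do match the paper: its Proposition \ref{approx_thm} is exactly the Vershik--Kerov-type approximation you invoke, proved there by conditional expectations and the factor property. But at the decisive point your proposal stops being a proof. You explicitly delegate the classification of limits to an unproven ``rigidity statement'' --- that extreme characters depend only on the support density $s(g)$ up to $\sgn_\infty$, and that the boundary of the branching graph consists only of asymptotically one-row (or one-column) diagrams with bounded complement --- and you yourself flag the plethysm asymptotics needed for this as ``the genuinely hard part.'' That is precisely the content the paper actually supplies, and it does so not by tail-algebra or boundary-ergodicity abstractions but by two concrete quantitative inputs: (i) the trichotomy of Lemma \ref{mu_cases} on the approximating partition sequences, with Roichman's bound (Proposition \ref{char_bound}) showing in Lemma \ref{inf_mu} that if $|\,^k\!\lambda\setminus(\,^k\!\lambda_1)|\to\infty$ along a subsequence then the limit is $\delta_{\id}$ (here the diagonal scaling $\#\supp_{N_j}=\#\supp_{N_k}\cdot N_j/N_k$ is what makes the exponent blow up); and (ii) the explicit matrix-element estimate of Lemma \ref{matrix_elem_formula} for minimal elements of conjugacy classes written as increasing products of Coxeter generators, combined with tableau counting, which in Lemma \ref{fin_mu} evaluates the limit as $(1-\alpha)^{|\mu|}=\chi_{\nat}(\sigma)^{|\mu|}$ when the complement stabilizes at $\mu$. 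Neither of these, nor any substitute for them, appears in your argument.

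Moreover, the specific mechanism you propose for the rigidity step is doubtful as stated. The copies $g^{(y)}$ of $g$ on the blocks $\mathbb{X}_{N_k}\times\{y\}$ cannot be ``pushed to infinity'' one at a time: the image $\mathfrak{i}_{k,j}(g)$ is always the simultaneous product of \emph{all} $N_j/N_k$ copies, the number of factors grows with $j$ while each factor has support density tending to $0$, so an asymptotic-independence (multiplicativity) argument would have to control an accumulating product of errors on elements of vanishing support density --- which is exactly what requires a uniform character bound of Roichman type, not merely triviality of the tail algebra. (The paper does use an asymptotic-multiplicativity argument of the flavor you describe, via Corollary \ref{densety_orbit}, but only in the \emph{easy} direction, to prove indecomposability of the listed characters in Lemma \ref{irr_lemma}, not to classify the limits.) So the gap is concrete: your proposal reduces the theorem to the boundary identification and then asserts rather than proves it; the paper's Lemmas \ref{inf_mu} and \ref{fin_mu} are the missing content.
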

	
	As a corollary, we also obtain the complete classification of the isomorphism classes of groups $\mathfrak{S}_{\widehat{\mathbf{n}}}$.
	
	\begin{Th}\label{isomorphclass}
		Let $\widehat{\mathbf{n}}'=\{n_k'\}_{k=1}^{\infty}$ and $\widehat{\mathbf{n}}''=\{n_k''\}_{k=1}^{\infty}$, where $n_k',n_k''>1$ for all $k$, be the sequences of positive integers. Put $N_k'=\prod_{i=1}^{k}n_i'$ and $N_k''=\prod_{i=1}^{k}n_i''$. Then, groups $\mathfrak{S}_{\widehat{\mathbf{n}}'}$ and $\mathfrak{S}_{\widehat{\mathbf{n}}''}$ (see Section \ref{group_def}) are isomorphic iff for each prime number $p$ the following condition holds:
		\begin{equation}\label{isomorph_condition}
			\lim_{k\to\infty}\deg_p(N_k')=\lim_{k\to\infty}\deg_p(N_k'').
		\end{equation}
		In other words, groups $\mathfrak{S}_{\widehat{\mathbf{n}}'}$ and $\mathfrak{S}_{\widehat{\mathbf{n}}''}$ are isomorphic iff for each prime $p$ either both sequences $\{\deg_p(N_k')\}_{k=1}^{\infty}$ and $\{\deg_p(N_k'')\}_{k=1}^{\infty}$ are unbounded, or there is a non-negative integer $d_p$ such that $\deg_p(N_k')=\deg_p(N_k'')=d_p$ for all sufficiently large $k$.
	\end{Th}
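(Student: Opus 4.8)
The plan is to prove both implications, using Theorem \ref{main_theorem} for necessity and a direct inductive-limit construction for sufficiency. Throughout I write $d_p'=\lim_{k\to\infty}\deg_p(N_k')$ and $d_p''=\lim_{k\to\infty}\deg_p(N_k'')$ (these limits exist in $\mathbb{N}\cup\{0,\infty\}$ because $\deg_p(N_k)$ is non-decreasing) and encode them in the supernatural numbers $\mathfrak{N}'=\prod_p p^{d_p'}$ and $\mathfrak{N}''=\prod_p p^{d_p''}$; condition \eqref{isomorph_condition} says exactly that $\mathfrak{N}'=\mathfrak{N}''$.

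\emph{Necessity.} Suppose $\phi\colon\mathfrak{S}_{\widehat{\mathbf{n}}'}\to\mathfrak{S}_{\widehat{\mathbf{n}}''}$ is an isomorphism. Since being a character, positive-definiteness, and indecomposability are all preserved under precomposition with a group isomorphism, the map $\chi\mapsto\chi\circ\phi$ is a bijection from the indecomposable characters of $\mathfrak{S}_{\widehat{\mathbf{n}}''}$ onto those of $\mathfrak{S}_{\widehat{\mathbf{n}}'}$, and by Theorem \ref{main_theorem} both sets are $\{\chi^p_{\nat},\ \sgn_\infty\cdot\chi^p_{\nat}\}_{p\in\mathbb{N}\cup\{0,\infty\}}$. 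First I would isolate the subfamily $\mathcal{C}=\{\chi^p_{\nat}\}_p$ as precisely the \emph{non-negative} indecomposable characters (when $\sgn_\infty$ is nontrivial the twisted characters attain negative values, and for $p=\infty$ the two families coincide), a property invariant under $\chi\mapsto\chi\circ\phi$; hence $\phi$ carries $\mathcal{C}''$ onto $\mathcal{C}'$. Next I would observe that $\mathcal{C}$ is closed under pointwise multiplication, with $\chi^p_{\nat}\cdot\chi^q_{\nat}=\chi^{p+q}_{\nat}$, so $(\mathcal{C},\cdot)\cong(\mathbb{N}\cup\{0,\infty\},+)$; since $\chi\mapsto\chi\circ\phi$ respects pointwise products it induces a monoid isomorphism $\mathcal{C}''\to\mathcal{C}'$. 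The monoid $(\mathbb{N}\cup\{0,\infty\},+)$ has trivial automorphism group (its unique indecomposable element $1$ must be fixed and generates the finite part, while $\infty$ is the unique absorbing element), so $\chi^p_{\nat}\circ\phi=\chi^p_{\nat}$ for every $p$; in particular $\chi_{\nat}\circ\phi=\chi_{\nat}$.

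By \eqref{chi_nat} this means $\phi$ preserves the support density $\rho(\sigma)=\#\supp_{N_k}\sigma/N_k=1-\chi_{\nat}(\sigma)$, so the set $D'$ of values that $\rho$ attains on $\mathfrak{S}_{\widehat{\mathbf{n}}'}$ equals the corresponding set $D''$. A short computation identifies $D'$: every integer $0\le a\le N_k'$ with $a\ne1$ is the support of some $\sigma\in\mathfrak{S}_{N_k'}$, while the one excluded value $1/N_k'$ is recovered inside $\mathfrak{S}_{N_{k+1}'}$, whence $D'=\{a/N_k':k\ge1,\ 0\le a\le N_k'\}$. Writing a rational in lowest terms as $u/v$, one gets $u/v\in D'$ iff $v\mid N_k'$ for some $k$, i.e. iff $v\mid\mathfrak{N}'$; consequently $d_p'=\sup\{m\ge0:1/p^m\in D'\}$, so $D'$ determines and is determined by $\mathfrak{N}'$. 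From $D'=D''$ I conclude $\mathfrak{N}'=\mathfrak{N}''$, which is \eqref{isomorph_condition}.

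\emph{Sufficiency.} Assume $\mathfrak{N}'=\mathfrak{N}''$. The key elementary fact is that for each $k$ there is a $j$ with $N_k'\mid N_j''$: the finitely many primes $p$ dividing $N_k'$ satisfy $\deg_p(N_k')\le d_p'=d_p''=\lim_j\deg_p(N_j'')$, so $\deg_p(N_j'')\ge\deg_p(N_k')$ for all large $j$. Since $N_k'\mid N_j''$, the diagonal rule \eqref{embedding} supplies an embedding $\mathfrak{S}_{N_k'}\hookrightarrow\mathfrak{S}_{N_j''}$, and symmetrically in the other direction. I would then build a zig-zag
\[
\mathfrak{S}_{N_{k_1}'}\hookrightarrow\mathfrak{S}_{N_{l_1}''}\hookrightarrow\mathfrak{S}_{N_{k_2}'}\hookrightarrow\mathfrak{S}_{N_{l_2}''}\hookrightarrow\cdots
\]
with $k_1<k_2<\cdots$ and $l_1<l_2<\cdots$ cofinal in the two chains, and pass to the inductive limit to obtain $\mathfrak{S}_{\widehat{\mathbf{n}}'}\cong\mathfrak{S}_{\widehat{\mathbf{n}}''}$. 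I expect the hardest step to be making this ladder commute exactly: each consecutive composite is again a diagonal embedding with the same multiplicity as the relevant structure map $\mathfrak{i}_{k_i,k_{i+1}}$ (resp. $\mathfrak{i}_{l_i,l_{i+1}}$), and since any two diagonal embeddings with fixed source and target are conjugate in the target, I would correct the successive maps by inner automorphisms in the style of an Elliott intertwining argument so that all triangles commute on the nose. This intertwining, together with the bookkeeping ensuring cofinality, is the main technical obstacle; by contrast the necessity direction is essentially forced once $\chi_{\nat}\circ\phi=\chi_{\nat}$ has been established.
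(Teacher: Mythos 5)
Your proof is correct, and it departs from the paper's argument in two genuine ways. For necessity both arguments share the same skeleton: use Theorem \ref{main_theorem} to see that $\chi\mapsto\chi\circ\phi$ permutes the indecomposable characters, show that this forces $\chi_{\nat}''\circ\phi=\chi_{\nat}'$, and then read off condition \eqref{isomorph_condition} from the value set $\left\{L/N_k\right\}$ via divisor-set arithmetic (the paper's Lemma \ref{equiv_cond_lemma} with $\Div(\widehat{\mathbf{n}}')=\Div(\widehat{\mathbf{n}}'')$; your supernatural numbers $\mathfrak{N}',\mathfrak{N}''$ encode exactly the same bookkeeping). The mechanism pinning down $\chi_{\nat}$ is, however, different: the paper characterizes it by maximality (Lemma \ref{max_char}: $\chi_{\nat}(\sigma)=\max_{\chi\in CH}|\chi(\sigma)|$ for $\sigma\neq\id$, after discarding the multiplicative characters $\chi^0$ and $\sgn_\infty$, which are preserved by pullback), whereas you first isolate $\exchar^+$ as the non-negative indecomposable characters and then invoke rigidity of the monoid $\left(\mathbb{N}\cup\{0,\infty\},+\right)$ under pointwise multiplication. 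Both are sound; your positivity step actually makes explicit a point the paper treats rather glibly (its assertion that $\alpha^*\left(\exchar^+\left(\mathfrak{S}_{\widehat{\mathbf{n}}''}\right)\right)=\exchar^+\left(\mathfrak{S}_{\widehat{\mathbf{n}}'}\right)$ ``by definition of $\alpha^*$'' is precisely your non-negativity observation, needed to rule out swapping twisted and untwisted families), while the monoid argument is slightly heavier machinery than the one-line maximality comparison. Second, you supply a sufficiency argument, which the paper's written proof omits entirely: the displayed proof of Theorem \ref{isomorphclass} establishes only that an isomorphism implies \eqref{isomorph_condition}, leaving the converse to the mutual-divisibility condition (b) of Lemma \ref{equiv_cond_lemma} and an unwritten interleaving construction. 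Your zig-zag with Elliott-style correction by inner automorphisms is the right way to fill this in: the composite of two embeddings of type \eqref{embedding} is again such an embedding with respect to the identification $\mathbb{X}_{Nm_1m_2}\cong\mathbb{X}_N\times\left(\mathbb{X}_{m_1}\times\mathbb{X}_{m_2}\right)$, any two such embeddings of equal multiplicity are conjugate by an element of the target group, and each successive map can be corrected by an inner automorphism without disturbing the triangles already fixed, so the ladder commutes exactly and the two limits are isomorphic along cofinal subsequences. Finally, your care about the excluded support size $1$ (the value $1/N_k'$ being unattainable in $\mathfrak{S}_{N_k'}$ but recovered in $\mathfrak{S}_{N_{k+1}'}$) is a detail the paper glosses over when it writes $\chi_{\nat}'\left(\mathfrak{S}_{\widehat{\mathbf{n}}'}\right)=\left\{L/N_k':0\le L\le N_k'\right\}$.
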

	\subsection{${\rm II}_1$-factor-representations of the group $\mathfrak{S}_{\widehat{\mathbf{n}}}$ and spherical representations of $\mathfrak{S}_{\widehat{\mathbf{n}}}\times \mathfrak{S}_{\widehat{\mathbf{n}}}$.}
Let $\chi$ be an arbitrary character on $\mathfrak{S}_{\widehat{\mathbf{n}}}$. Consider GNS-representation $(\pi_\chi,\mathcal{H}_\chi,\xi_\chi)$ of $\mathfrak{S}_{\widehat{\mathbf{n}}}$ corresponding  to $\chi$, where $\xi_\chi$ is the unit cyclic vector for $\pi_\chi\left( \mathfrak{S}_{\widehat{\mathbf{n}}}\right)$ in the Hilbert space $\mathcal{H}_\chi$ such that $\chi(\sigma)=\left<\pi_\chi(\sigma)\xi_\chi,\xi_\chi\right>$ for all $\sigma\in \mathfrak{S}_{\widehat{\mathbf{n}}}$.
Denote by $M$ $w^*$-algebra generated by a set of operators $\pi_\chi\left(\mathfrak{S}_{\widehat{\mathbf{n}}}\right)$.	Character $\chi$ defines a finite, normal faithful trace ${\rm tr}$ on $M$. Namely,
\begin{eqnarray*}
{\rm tr}\left(\pi_\chi(\sigma)\right)=\chi(\sigma), \sigma\in \mathfrak{S}_{\widehat{\mathbf{n}}}.
\end{eqnarray*}
Taking into account the definition of GNS-construction, we assume that $\mathcal{H}_\chi=L^2(M,{\rm tr})$, where inner product $\left<\cdot,\cdot \right>$ is defined as follows
\begin{eqnarray*}
\left<m_1,m_2 \right>={\rm tr}\left(m_2^*m_1\right),\;\;m_1,m_2\in M,
\end{eqnarray*}
and $\xi_\chi$ is an identity operator from $M$. Finally, we recall that the operators $\pi_\chi(\sigma)$, $\sigma\in  \mathfrak{S}_{\widehat{\mathbf{n}}}$ act on $L^2(M,{\rm tr})$ by left multiplication
\begin{eqnarray}\label{left_component}
L^2(M,{\rm tr})\ni v\stackrel{\pi_\chi(\sigma)}{\rightarrow} \pi_\chi(\sigma)\cdot v\in L^2(M,{\rm tr}).
\end{eqnarray}
Denote by $\mathcal{B}\left(\mathcal{H}_\chi\right)$ the set of all bounded linear operators on $\mathcal{H}_\chi$, and put $$M'=\left\{A\in\mathcal{B}\left(\mathcal{H}_\chi \right): AB=BA \text{ for all } B\in M\right\}.$$
Since ${\rm tr}$ is a central state on $M$; i. e. ${\rm tr}(m_1m_2)={\rm tr}(m_2m_1)$ for all $m_1,m_2\in M$, the mapping
\begin{eqnarray}\label{right_component_general_case}
L^2(M,{\rm tr})\ni v\stackrel{\pi'_\chi(\sigma)}{\rightarrow} v\cdot \pi_\chi(\sigma^{-1})\in L^2(M,{\rm tr})
\end{eqnarray}
define an unitary operator   $\pi'_\chi(\sigma)\in M'$ on $\mathcal{H}_\chi$.
Denote by $J$ the antilinear isometry which acts as follows: $L^2(M,{\rm tr})\ni m\stackrel{J}{\rightarrow}m^*$. It is clear that $J^2=I$.

It follows from the above that
\begin{eqnarray}\label{JMJ in M'}
JMJ\subset M' \text{  and  } M\subset JM'J.
\end{eqnarray}
Let us prove that
\begin{eqnarray}\label{M=JMJ'}
 M= JM'J.
\end{eqnarray}
Let operator $A$ belongs to $JM'J$. Since $\xi_\chi$ is cyclic for $M$; i. e. a set $M\xi_\chi$ is norm dense in $L^2\left(M,{\rm tr} \right)$, there exists a sequence $\left\{A_n \right\}_{n\in\mathbb{N}}\subset M$ such that
\begin{eqnarray}\label{approx_A}
\lim\limits_{n\to \infty}\left\|A\xi_\chi-A_n\xi_\chi\right\|_{L^2(M,{\rm tr})}=0.
\end{eqnarray}
Hence, applying centrality of $\chi$, we have
\begin{eqnarray*}\label{fundamental_sequence}
\lim\limits_{m,n\to \infty}\left\|A_m^*\xi_\chi-A_n^*\xi_\chi\right\|_{L^2(M,{\rm tr})}=0.
\end{eqnarray*}
It follows from this that sequence $\left\{A^*_n\xi_\chi \right\}$ converges in norm to $\eta\in L^2(M,{\rm tr})$. Hence for each $U'\in JMJ$ we  obtain  next chain of equalities:
\begin{eqnarray*}
\left<(U')^*\xi_\chi,\eta\right>=\lim\limits_{n\to\infty}\left<U')^*\xi_\chi,A_n^*\xi_\chi\right>\\
=\lim\limits_{n\to\infty}\left< A_n\xi_\chi,U'\xi_\chi\right>\stackrel{(\ref{approx_A})}{=}\left< A\xi_\chi,U'\xi_\chi\right>=\left< (U')^*\xi_\chi,A^*\xi_\chi\right>.
\end{eqnarray*}
Therefore, using the cyclicity of $\xi_\chi$ for $JMJ$, we have
\begin{eqnarray}\label{adjoint_convergence}
A^*\xi_\chi=\eta; ~\text{ i.e. }~ \lim\limits_{n\to\infty}\left\|A^*_n\xi_\chi -A^*\xi_\chi\right\|_{L^2(M,{\rm tr})}=0.
\end{eqnarray}
\begin{Lm}
Put $\omega(x)=\left<x\xi_\chi,\xi_\chi\right>$, where $x\in\mathcal{B}\left(\mathcal{H}_\eta\right)$. Then
 \begin{itemize}
   \item {\bf 1.} $omega$ is a central state on $M'$ and on $JM'J$; i. e.
\begin{eqnarray}
\begin{split}
\omega \left(A'B'\right)=\omega \left(B'A'\right)  \text{ for all } A',B'\in M',
\text{ and }\\
\omega \left(AB\right)=\omega \left(BA\right)  \text{ for all } A,B\in JM'J;
\end{split}
\end{eqnarray}
   \item {\bf 2.} $AB'=B'A$ for all $A\in JM'J$ and $B'\in M'$.
 \end{itemize}
 \end{Lm}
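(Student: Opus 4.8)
The plan is to reduce everything to two facts already in hand: the identity $M=JM'J$ just established in \eqref{M=JMJ'}, together with the trace property of ${\rm tr}$ on $M$. First I would record that $\omega$ is automatically a state, being the vector state of the unit vector $\xi_\chi$: indeed $\omega(I)=\langle\xi_\chi,\xi_\chi\rangle=1$ and $\omega(x^*x)=\|x\xi_\chi\|^2\ge 0$. Moreover, for $m\in M$ one has $\omega(m)=\langle m\xi_\chi,\xi_\chi\rangle=\langle m,I\rangle={\rm tr}(I^*m)={\rm tr}(m)$, since $\xi_\chi$ is the identity operator $I$; thus $\omega$ restricted to $M$ is exactly the central trace ${\rm tr}$.

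Next I would dispose of the two easy assertions. Applying $J$ to $M=JM'J$ and using $J^2=I$ gives $M'=JMJ$, hence also $JM'J=M$. Consequently the claim that $\omega$ is central on $JM'J$ is literally the statement that $\omega={\rm tr}$ is central on $M$, which is given. In the same way, part \textbf{2} is immediate: since $A\in JM'J=M$ and $B'\in M'$, the equality $AB'=B'A$ is nothing but the defining property of the commutant $M'$.

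The only genuine computation is the centrality of $\omega$ on $M'$. Here I would write arbitrary $A',B'\in M'=JMJ$ as $A'=JaJ$ and $B'=JbJ$ with $a,b\in M$, and use $J^2=I$ to get $A'B'=JabJ$. Since $\xi_\chi=I$ is self-adjoint we have $J\xi_\chi=\xi_\chi$, and tracking the antilinear action $Jm=m^*$ through the product yields $A'B'\xi_\chi=(ab)^*=b^*a^*$. Hence $\omega(A'B')=\langle b^*a^*,I\rangle={\rm tr}(b^*a^*)$, and symmetrically $\omega(B'A')={\rm tr}(a^*b^*)$; the trace property ${\rm tr}(b^*a^*)={\rm tr}(a^*b^*)$ closes the argument.

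The point requiring care—rather than a serious obstacle—is the antilinearity of $J$: one must keep the order of operations straight in $(JaJ)(JbJ)=JabJ$ and use that $\xi_\chi$ is fixed by $J$. Once $M'=JMJ$ is in place, the whole lemma is essentially a transport of the trace property of ${\rm tr}$ across conjugation by $J$, combined with the tautological commutation of $M$ with its commutant.
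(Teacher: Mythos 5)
Your proposal is circular within the paper's logical structure. At the point where this Lemma is stated, the paper has in hand only the soft inclusions \eqref{JMJ in M'}, namely $JMJ\subset M'$ and $M\subset JM'J$, together with the adjoint convergence \eqref{adjoint_convergence}. The equality $M=JM'J$ of \eqref{M=JMJ'} is merely \emph{announced} before the Lemma (``Let us prove that\dots''); its proof is completed only at the end of the Lemma's own proof, whose last sentence reads ``In particular, this establishes equality (\ref{M=JMJ'})'' --- Property 2 gives $JM'J\subset (M')'=M''=M$, which combined with $M\subset JM'J$ yields the equality. So you cannot treat $M=JM'J$ (equivalently $M'=JMJ$) as ``just established'': granted that equality, Property 2 is indeed the tautological commutation of $M$ with $M'$, centrality on $JM'J=M$ is just the trace property of ${\rm tr}$, and your $J$-conjugation computation $\omega(A'B')={\rm tr}(b^*a^*)={\rm tr}(a^*b^*)=\omega(B'A')$ for $A'=JaJ$, $B'=JbJ$ is correct --- but all of this assumes the conclusion. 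Note in particular that your treatment of centrality on $M'$ needs the hard inclusion $M'\subset JMJ$, precisely the direction \emph{not} covered by \eqref{JMJ in M'}.

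The paper's actual proof avoids the commutation theorem entirely: for $A,B\in JM'J$ it approximates $A\xi_\chi$ and $B\xi_\chi$ by $A_n\xi_\chi$, $B_n\xi_\chi$ with $A_n,B_n\in M$ (cyclicity of $\xi_\chi$ for $M$), upgrades this to convergence of the adjoints via \eqref{adjoint_convergence} --- which itself rests on the centrality of $\chi$ --- and passes ${\rm tr}(A_nB_n)={\rm tr}(B_nA_n)$ to the limit; Property 2 is proved by a similar approximation using only $JMJ\subset M'$, testing against vectors $U'\xi_\chi$, $V\xi_\chi$ with $U'\in JMJ$, $V\in M$. To repair your argument you must either reproduce such an approximation scheme or explicitly cite the commutation theorem for finite von Neumann algebras from the literature; in the latter case the entire passage of the paper, whose purpose is to establish that theorem in this setting, would be bypassed rather than proved.
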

\begin{proof}
{\bf Property 1.}
Since $\xi_\chi$ is cyclic vector for $M$, there exists the sequences $\left\{A_n \right\}$, $\left\{B_n \right\}$ in $M$ such that
\begin{eqnarray*}
\lim\limits_{n\to\infty}\left\|A\xi_\eta-A_n\xi_\eta \right\|_{L^2(M,{\rm tr})}=\lim\limits_{n\to\infty}\left\|B\xi_\eta-B_n\xi_\eta \right\|_{L^2(M,{\rm tr})}=0.
\end{eqnarray*}
Hence, using (\ref{adjoint_convergence}), we obtain
\begin{eqnarray*}
\lim\limits_{n\to\infty}\left\|A^*\xi_\eta-A_n^*\xi_\eta \right\|_{L^2(M,{\rm tr})}=\lim\limits_{n\to\infty}\left\|B^*\xi_\eta-B_n^*\xi_\eta \right\|_{L^2(M,{\rm tr})}=0.
\end{eqnarray*}
Therefore, $\omega\left(AB\right)=\lim\limits_{n\to\infty}\omega\left(A_nB_n\right)=\lim\limits_{n\to\infty}{\rm tr}\,\left(A_nB_n\right)=\lim\limits_{n\to\infty}{\rm tr}\,\left(B_nA_n\right)$ $=\lim\limits_{n\to\infty}\omega\left(B_nA_n\right)=\omega\left(BA\right)$. We leave it to the reader to verify that  $\omega \left(A'B'\right)$ $=\omega \left(B'A'\right)  \text{ for all } A',B'\in M'$.

{\bf Property 2.} Since $J^2=I$, then, by cyclicity of the vector $\xi_\chi$ for $M$,  there exists the sequences $\left\{A_n \right\}$, $\left\{B_n \right\}$ in $M$ such that
\begin{eqnarray*}
\lim\limits_{n\to\infty}\left\|A\xi_\chi-A_n\xi_\chi\right\|=0 \text{ and  } \lim\limits_{n\to\infty}\left\|JB'J\xi_\chi-B_n\xi_\chi\right\|=0.
\end{eqnarray*}
Hence, using the equality $\left\|B'\xi_\chi-JB_n J \xi_\chi \right\|=\left\|(B')^*\xi_\chi-(JB_n J)^* \xi_\chi \right\|$, which is due to the fact that $\omega$ is central state on $M'$, we obtain for any $U'\in JMJ$ and $V\in M$
\begin{eqnarray*}
\left<AB'U'\xi_\chi,V\xi_\chi\right>=\lim\limits_{n\to\infty}\left<A_n\,JB_nJ\,U'\xi_\chi,V\xi_\chi\right>\stackrel{(\ref{JMJ in M'})}{=}\lim\limits_{n\to\infty}\left<JB_nJ\,A_n\,U'\xi_\chi,V\xi_\chi\right>\\
=\lim\limits_{n\to\infty}\left<A_n\,U'\xi_\chi,V\left(JB_nJ\right)^*\xi_\chi\right>=\left<A\,U'\xi_\chi,V(B')^*\xi_\chi\right>=
\left<B'A\,U'\xi_\chi,V\xi_\chi\right>.
\end{eqnarray*}
Consequently, $AB'=B'A$. In particular, this establishes equality  (\ref{M=JMJ'}).
\end{proof}

Now we define the representation $\pi_\chi^{(2)}$ of the group $\mathfrak{S}_{\widehat{\mathbf{n}}}\times \mathfrak{S}_{\widehat{\mathbf{n}}}$ as follows
\begin{eqnarray}
\pi_\chi^{(2)}((\sigma_1,\sigma_2))=\pi_\chi(\sigma_1)\pi_\chi'(\sigma_2),\; (\sigma_1,\sigma_2)\in \mathfrak{S}_{\widehat{\mathbf{n}}}\times \mathfrak{S}_{\widehat{\mathbf{n}}}.
\end{eqnarray}
Hence, applying (\ref{left_component}) and (\ref{right_component_general_case}), we obtain
\begin{eqnarray}\label{cyclic vector is diagonal fixed}
\pi_\chi^{(2)}((\sigma,\sigma))\xi_\eta=\xi_\eta~ \text{for all }~ \sigma\in  \mathfrak{S}_{\widehat{\mathbf{n}}}.
\end{eqnarray}

Denote by $\mathcal{C}(M)$ center of $w^*$-algebra $M$.
\begin{Prop}\label{center=commutant}
$\left(\pi_\chi^{(2)}\left(\mathfrak{S}_{\widehat{\mathbf{n}}}\times \mathfrak{S}_{\widehat{\mathbf{n}}}\right)\right)'=\mathcal{C}(M)$.
\end{Prop}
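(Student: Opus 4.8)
The plan is to identify the $w^*$-algebra generated by $\pi^{(2)}_\chi\bigl(\mathfrak{S}_{\widehat{\mathbf{n}}}\times\mathfrak{S}_{\widehat{\mathbf{n}}}\bigr)$ and then take its commutant, exploiting the fact that $M$ is already in standard form on $\mathcal{H}_\chi=L^2(M,{\rm tr})$, as expressed by the identity $M=JM'J$ proved in (\ref{M=JMJ'}). Since the commutant of a set of operators coincides with the commutant of the $w^*$-algebra it generates, the whole problem reduces to computing the commutant of a single von Neumann algebra.

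First I would record the precise relation between the right translations $\pi'_\chi$ and conjugation by $J$. Using the definitions $\pi_\chi(\sigma)v=\pi_\chi(\sigma)\cdot v$, $\pi'_\chi(\sigma)v=v\cdot\pi_\chi(\sigma^{-1})$ and $Jm=m^*$, together with unitarity $\pi_\chi(\sigma)^*=\pi_\chi(\sigma^{-1})$, one computes $J\pi_\chi(\sigma)Jv=\bigl(\pi_\chi(\sigma)v^*\bigr)^*=v\,\pi_\chi(\sigma)^*=\pi'_\chi(\sigma)v$, so that $\pi'_\chi(\sigma)=J\pi_\chi(\sigma)J$ for every $\sigma$. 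Because $M$ is the $w^*$-algebra generated by $\pi_\chi(\mathfrak{S}_{\widehat{\mathbf{n}}})$ and the map $X\mapsto JXJ$ carries von Neumann algebras to von Neumann algebras, it follows that the $w^*$-algebra generated by $\pi'_\chi(\mathfrak{S}_{\widehat{\mathbf{n}}})$ is exactly $JMJ$.

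Next I would convert the standard-form identity into the statement I need. Conjugating $M=JM'J$ by $J$ and using $J^2=I$ gives $JMJ=J(JM'J)J=M'$; hence $\pi'_\chi(\mathfrak{S}_{\widehat{\mathbf{n}}})$ generates precisely the commutant $M'$. Therefore the $w^*$-algebra generated by $\pi^{(2)}_\chi\bigl(\mathfrak{S}_{\widehat{\mathbf{n}}}\times\mathfrak{S}_{\widehat{\mathbf{n}}}\bigr)=\{\pi_\chi(\sigma_1)\pi'_\chi(\sigma_2)\}$ is the von Neumann algebra generated by $M\cup M'$: taking $\sigma_2=\id$ produces all of $M$, taking $\sigma_1=\id$ produces all of $M'$, and conversely each generator is a product of an element of $M$ with an element of $M'$. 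Taking commutants, an operator $A$ commutes with $\pi^{(2)}_\chi\bigl(\mathfrak{S}_{\widehat{\mathbf{n}}}\times\mathfrak{S}_{\widehat{\mathbf{n}}}\bigr)$ iff it commutes with both $M$ and $M'$, i.e. $A\in M'\cap M''$; by the bicommutant theorem $M''=M$, whence the commutant equals $M'\cap M=\mathcal{C}(M)$. The reverse inclusion is immediate, since any $z\in\mathcal{C}(M)=M\cap M'$ commutes with every $\pi_\chi(\sigma_1)\in M$ and every $\pi'_\chi(\sigma_2)\in M'$.

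I do not expect a genuine obstacle, as the proposition is essentially a corollary of the standard-form identity (\ref{M=JMJ'}) already established. The only point demanding care is the antilinearity of $J$: one must check that the conjugation $X\mapsto JXJ$ is a weakly continuous bijection sending von Neumann algebras to von Neumann algebras and that the relation $\pi'_\chi(\sigma)=J\pi_\chi(\sigma)J$ holds exactly, so that the generators of the right action genuinely exhaust $M'$ rather than a proper subalgebra. Once $JMJ=M'$ is secured, the result is the standard identity $(M\cup M')'=\mathcal{C}(M)$.
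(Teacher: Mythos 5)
Your proof is correct and follows essentially the same route as the paper: the paper's one-line argument also derives $\left(\pi_\chi^{(2)}\left(\mathfrak{S}_{\widehat{\mathbf{n}}}\times \mathfrak{S}_{\widehat{\mathbf{n}}}\right)\right)'=M\cap M'=\mathcal{C}(M)$ directly from the definition (\ref{right_component_general_case}) of the right translations and the standard-form identity (\ref{M=JMJ'}). You have merely made explicit the intermediate steps the paper leaves implicit, namely $\pi'_\chi(\sigma)=J\pi_\chi(\sigma)J$, that conjugation by the antilinear involution $J$ carries the generated von Neumann algebra $M$ onto $JMJ=M'$, and the resulting commutant computation.
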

\begin{proof}
It follows from (\ref{right_component_general_case}) and (\ref{M=JMJ'}) that $\left(\pi_\chi^{(2)}\left(\mathfrak{S}_{\widehat{\mathbf{n}}}\times \mathfrak{S}_{\widehat{\mathbf{n}}}\right)\right)'= M\cap M'=\mathcal{C}(M)$.
\end{proof}
Define an orthogonal projection  $E_k$ by
\begin{eqnarray}\label{fixed_diagonal}
E_k=\frac{1}{N_k!}\sum\limits_{\sigma\in \mathfrak{S}_{\widehat{\mathbf{n}}}}\pi_\chi^{(2)}\left(\sigma,\sigma\right).
\end{eqnarray}
It is clear that $E_k\geq E_{k+1}$. Therefore, there exists the limit $E=\lim\limits_{k\to\infty}E_k$. It follows from (\ref{fixed_diagonal}) that
\begin{eqnarray}
E\mathcal{H}_\chi=\left\{\eta\in\mathcal{H}_\chi:\pi_\chi^{(2)}((\sigma,\sigma))\eta=\eta~ \text{ for all }~ \sigma\in\mathfrak{S}_{\widehat{\mathbf{n}}}\right\}.
\end{eqnarray}

\begin{Prop}
The following three conditions are equivalent:
\begin{itemize}
  \item {\rm(i)} $\pi_\chi$ is a factor-representation;
  \item {\rm(ii)} representation $\pi_\chi^{(2)}$ is irreducible;
  \item {\rm(iii)} ${\rm dim}\,E\mathcal{H}_\chi=1$.
\end{itemize}
\end{Prop}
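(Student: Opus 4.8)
The plan is to prove the chain of equivalences as $(\mathrm{i})\Leftrightarrow(\mathrm{ii})$ and $(\mathrm{i})\Leftrightarrow(\mathrm{iii})$, since the first is essentially immediate from Proposition \ref{center=commutant} while the second carries the real content. Throughout I would use that $\xi_\chi$ is the trace vector, hence cyclic and separating for $M$, so that $m\mapsto m\xi_\chi$ is injective on $M$; and that \eqref{cyclic vector is diagonal fixed} gives $\xi_\chi\in E\mathcal{H}_\chi$, whence $\dim E\mathcal{H}_\chi\ge 1$ in all cases.

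For $(\mathrm{i})\Leftrightarrow(\mathrm{ii})$: a unitary representation is irreducible iff its commutant reduces to the scalars, so by Schur's lemma $\pi_\chi^{(2)}$ is irreducible iff $\left(\pi_\chi^{(2)}\left(\mathfrak{S}_{\widehat{\mathbf{n}}}\times\mathfrak{S}_{\widehat{\mathbf{n}}}\right)\right)'=\mathbb{C}I$. By Proposition \ref{center=commutant} this commutant equals $\mathcal{C}(M)$, and $\mathcal{C}(M)=\mathbb{C}I$ is precisely the assertion that $M$ is a factor, i.e. that $\pi_\chi$ is a factor-representation. This settles $(\mathrm{i})\Leftrightarrow(\mathrm{ii})$ with no further work. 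For $(\mathrm{iii})\Rightarrow(\mathrm{i})$ I would first verify $\mathcal{C}(M)\xi_\chi\subseteq E\mathcal{H}_\chi$: if $z\in\mathcal{C}(M)=M\cap M'$ then $z$ commutes with each $\pi_\chi(\sigma)$, and using \eqref{left_component} and \eqref{right_component_general_case} one computes $\pi_\chi^{(2)}((\sigma,\sigma))(z\xi_\chi)=\left(\pi_\chi(\sigma)z\pi_\chi(\sigma)^{-1}\right)\xi_\chi=z\xi_\chi$. Since $z\mapsto z\xi_\chi$ is injective, the hypothesis $\dim E\mathcal{H}_\chi=1$ forces $\dim\mathcal{C}(M)\le1$, i.e. $\mathcal{C}(M)=\mathbb{C}I$.

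The substantive implication, and the main obstacle, is $(\mathrm{i})\Rightarrow(\mathrm{iii})$: assuming $M$ is a finite factor, I must show that every diagonal-fixed vector is a multiple of $\xi_\chi$. Given $\eta\in E\mathcal{H}_\chi$ I would introduce the normal linear functional $\phi(m)=\left<m\xi_\chi,\eta\right>$ on $M$. Reading the computation above for a general $m$ gives $\pi_\chi^{(2)}((\sigma,\sigma))(m\xi_\chi)=\left(\pi_\chi(\sigma)m\pi_\chi(\sigma)^{-1}\right)\xi_\chi$; combining this with $\pi_\chi^{(2)}((\sigma,\sigma))\eta=\eta$ and the unitarity of $\pi_\chi^{(2)}$ yields $\phi\!\left(\pi_\chi(\sigma)m\pi_\chi(\sigma)^{-1}\right)=\phi(m)$ for all $\sigma$ and all $m$. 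Substituting $m=b\,\pi_\chi(\sigma)$ rearranges this to $\phi(\pi_\chi(\sigma)\,b)=\phi(b\,\pi_\chi(\sigma))$; since the linear span of $\{\pi_\chi(\sigma)\}$ is weak-$*$ dense in $M$ and $\phi$ is normal, it follows that $\phi$ is a trace on $M$. A finite factor carries a unique normal trace up to scalar, so $\phi=c\cdot{\rm tr}=c\left<\,\cdot\,\xi_\chi,\xi_\chi\right>$, and cyclicity of $\xi_\chi$ then gives $\eta=\bar c\,\xi_\chi$. Hence $E\mathcal{H}_\chi=\mathbb{C}\xi_\chi$ and $\dim E\mathcal{H}_\chi=1$.

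The delicate points to watch are exactly the two passages in the last step: upgrading conjugation-invariance of the normal functional $\phi$ from the generating unitaries $\pi_\chi(\sigma)$ to the full tracial identity on $M$ (via normality plus weak-$*$ density), and the appeal to uniqueness of the normal trace on a finite factor. Should one prefer to avoid the trace argument, an alternative for $(\mathrm{i})\Rightarrow(\mathrm{iii})$ is to realize $\eta$ as a closed operator affiliated with $M$ under the standard identification of $L^2(M,{\rm tr})$ with affiliated operators, reduce to the case $J\eta=\eta$ using $JEJ=E$, and note that the spectral projections of the resulting self-adjoint affiliated operator lie in $M\cap M'=\mathbb{C}I$, which forces $\eta$ to be a scalar. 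I expect the functional-analytic density step to be the only part requiring genuine care.
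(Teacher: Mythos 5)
Your proof is correct, and for the pivotal implication (i)$\Rightarrow$(iii) it takes a genuinely different route from the paper. Your handling of (i)$\Leftrightarrow$(ii) (Schur plus Proposition~\ref{center=commutant}) is exactly the paper's, and your (iii)$\Rightarrow$(i) agrees in substance: the paper runs it contrapositively with a projection $Q\in\mathcal{C}(M)$, producing the two orthogonal diagonal-fixed vectors $Q\xi_\chi$ and $(I-Q)\xi_\chi$, which is just your inclusion $\mathcal{C}(M)\xi_\chi\subseteq E\mathcal{H}_\chi$ specialized to projections (your version, using that $\xi_\chi$ is separating, is if anything cleaner). The real divergence is in (i)$\Rightarrow$(iii). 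The paper argues by approximation and averaging: given a unit vector $v\in E\mathcal{H}_\chi$ with $\left<v,\xi_\chi\right>=0$, it picks $m\in M$ with $\left\|v-m\xi_\chi\right\|<1$, notes that diagonal invariance of $v$ and $\xi_\chi$ preserves this bound under the averages $\frac{1}{N_k!}\sum_{\sigma\in\mathfrak{S}_{N_k}}\pi_\chi(\sigma)\,m\,\pi_\chi(\sigma^{-1})$, lets these converge strongly to $Em\in\mathcal{C}(M)=\mathbb{C}I$, and derives the contradiction $\sqrt{1+|\alpha|^2}<1$. You instead extract from $\eta\in E\mathcal{H}_\chi$ the normal functional $\phi(m)=\left<m\xi_\chi,\eta\right>$, prove it is tracial via the density of the span of $\pi_\chi\left(\mathfrak{S}_{\widehat{\mathbf{n}}}\right)$, and invoke uniqueness of the normal trace on a factor. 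Both work; the paper's averaging argument is more self-contained (it uses only the projections $E_k$, $E$ already introduced and, in effect, shows that $E$ projects onto $\overline{\mathcal{C}(M)\xi_\chi}$), while yours is shorter and isolates exactly where factoriality enters.

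One step in your main argument deserves an explicit line: the uniqueness you cite is normally stated for normal tracial \emph{states}, whereas your $\phi$ is a priori neither positive nor hermitian. The general statement is still true, but needs the small argument you yourself sketched as the alternative: write $\phi={\rm tr}\left(\,\cdot\,h\right)$ with $h\in L^1(M,{\rm tr})$ (every normal functional has this form; here $h$ is essentially $J\eta$); traciality gives ${\rm tr}\left(x(yh-hy)\right)=0$ for all $x,y\in M$, hence $h$ commutes with $M$ and is scalar in a factor, so $\phi=c\,{\rm tr}$ and cyclicity yields $\eta=\bar{c}\,\xi_\chi$. So there is no genuine gap, but you should run this $L^1$ (or affiliated-operator) argument explicitly rather than citing uniqueness of the trace off the shelf.
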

\begin{proof}
The equivalence {\rm(i)} and {\rm(ii)} follows from proposition \ref{center=commutant}.

Let us prove that {\rm(i)} $\Rightarrow$ {\rm(iii)}. On the contrary, suppose that ${\rm dim}\,E\mathcal{H}_\chi\geq 2$. Then there exists  unit vector $v\in E\mathcal{H}_\chi$ such that
\begin{eqnarray}\label{orthogonality}
\left<v,\xi_\chi\right>=0.
\end{eqnarray}
Since $\xi_\chi$  is cyclic vector for $M$, then
\begin{eqnarray}
\left\|v-m\xi_\chi\right\|_{\mathcal{H}_\chi}<1 ~\text{ for some }~ m\in M.
\end{eqnarray}
Hence, using (\ref{cyclic vector is diagonal fixed}) and applying an equality $$\pi_\chi^{(2)}((\sigma,\sigma))\,m\, \pi_\chi^{(2)}((\sigma^{-1},\sigma^{-1}))=\pi_\chi(\sigma)\, m \pi_\chi(\sigma^{-1}),$$ we have
\begin{eqnarray}
\left\|v- \pi_\chi(\sigma)\, m \pi_\chi(\sigma^{-1})\;\xi_\eta \right\|_\mathcal{H_\chi}<1 ~\text{ for all }~\sigma\in\mathfrak{S}_{\widehat{\mathbf{n}}}.
\end{eqnarray}
Consequently,
\begin{eqnarray}\label{approx_v}
\left\|v-\frac{1}{N_k!}\sum\limits_{\sigma\in\mathfrak{S}_{N_k}} \pi_\chi(\sigma)\, m \pi_\chi(\sigma^{-1})\;\xi_\eta \right\|_\mathcal{H_\chi}<1.
\end{eqnarray} It easy to check that sequence $\left\{m_k=\frac{1}{N_k!}\sum\limits_{\sigma\in\mathfrak{S}_{N_k}} \pi_\chi(\sigma)\, m \pi_\chi(\sigma^{-1}) \right\}\subset M$ converges in strong operator topology to an operator  $Em\in \mathcal{C}(M)$. Emphasize that we identify here $Em\in \mathcal{H}_\chi=L^2(M,{\rm tr})$ with the corresponding  left multiplication operator from $M$. By (\ref{approx_v}),
\begin{eqnarray*}
\left\|v-Em\xi_\chi \right\|_{\mathcal{H}_\chi}<1.
\end{eqnarray*}
Hence, using property (i), we obtain
\begin{eqnarray*}
\left\|v-\alpha\xi_\chi \right\|_\mathcal{H}=\sqrt{1+|\alpha|^2-2\Re(\alpha<v,\xi_\chi>)}\stackrel{(\ref{orthogonality})}{=}\sqrt{1+|\alpha|^2}<1,~ \text{ where }~ \alpha\in \mathbb{C}.
\end{eqnarray*}
To prove that (iii) implies (i) suppose contrary, that there exist an orthogonal projection $Q\in \mathcal{C}(M)$ with the properties:
\begin{eqnarray*}
v=Q\xi_\chi\neq 0,w=(I-Q)\xi_\chi\neq0.
\end{eqnarray*}
Since  $v,w$ are mutually orthogonal vectors from $E\mathcal{H}_\chi$, then ${\rm dim}\,E\mathcal{H}_\chi\geq2$.
\end{proof}
	\section{The realizations  of ${\rm II}_1$-representations}
	
	In this section we give the explicit construction of a type ${\rm II}_1$ factor representation of the group $\mathfrak{S}_{\widehat{\mathbf{n}}}$ and the corresponding irreducible representation of the group $\overline{\mathfrak{S}_{\widehat{\mathbf{n}}}}\times \overline{\mathfrak{S}_{\widehat{\mathbf{n}}}}$.
	
	\subsection{Preliminaries}\label{prelim_1}
	
	Denote by $\nu_m$ the uniform probability measure on the set $\mathbb{X}_m=\{0,1,\ldots, m-1\}$, i.e. $\nu_m(\{j\})=\frac{1}{m}$ for all $j\in\mathbb{X}_{m}$. Let $\mathbb{X}_{\widehat{\mathbf{n}}}=\prod\limits_{k=1}^\infty \mathbb{X}_{n_k}$. For $x=(x_1,x_2,\ldots)\in\mathbb{X}_{\widehat{\mathbf{n}}}$ we set $\,^j\!x=(x_1,x_2,\ldots,x_j)\in\prod\limits_{k=1}^j\mathbb{X}_{n_k}$. Each element $y\in \prod\limits_{k=1}^j\mathbb{X}_{n_k}$ defines a cylindric set
	\begin{eqnarray}\label{cylinder}
		\,^j\!\!\mathbb{A}_y=\left\{ x\in\mathbb{X}_{\widehat{\mathbf{n}}}:\,^j\!x=y \right\}\subset\mathbb{X}_{\widehat{\mathbf{n}}}.
	\end{eqnarray}
	Now introduce the probability measure $\nu_{\widehat{\mathbf{n}}}=\prod\limits_{k=1}^\infty \nu_{n_k}$ on  $\mathbb{X}_{\widehat{\mathbf{n}}}$ by the formula
	\begin{equation*}
		\nu_{\widehat{\mathbf{n}}}(\,^j\!\!\mathbb{A}_y)=\frac{1}{n_1n_2\cdots n_j}=\frac{1}{N_j}.
	\end{equation*}
	Let $\Aut_0\left( \mathbb{X}_{\widehat{\mathbf{n}}},\nu_{\widehat{\mathbf{n}}} \right)$ be the group of automorphisms of the Lebesgue space $\left(\mathbb{X}_{\widehat{\mathbf{n}}},\nu_{\widehat{\mathbf{n}}}  \right)$ which preserve the measure $\nu_{\widehat{\mathbf{n}}}$. It follows from the definition of $\mathfrak{S}_{\widehat{\mathbf{n}}}$ that
	\begin{equation*}
		\mathfrak{S}_{\widehat{\mathbf{n}}}\subset \Aut_0\left( \mathbb{X}_{\widehat{\mathbf{n}}},\nu_{\widehat{\mathbf{n}}} \right).
	\end{equation*}
	\begin{Rem}
		Here an element $\sigma\in\mathfrak{S}_{N_k}$ acts on $\mathbb{X}_{\widehat{\mathbf{n}}}$ as follows: $\sigma$ maps an element $(x,y)\in\mathbb{X}_{N_k}\times\prod\limits_{j=k+1}^{\infty}\mathbb{X}_{n_j}$ to $(\sigma(x),y)$ (see also \eqref{embedding}).
	\end{Rem}
	
	Define the action of the automorphism $O\in \Aut_0\left( \mathbb{X}_{\widehat{\mathbf{n}}},\nu_{\widehat{\mathbf{n}}} \right)$ on $x=\left( x_1,x_2,\ldots\right)\in \mathbb{X}_{\widehat{\mathbf{n}}}\setminus(n_1-1,n_2-1,\ldots,n_k-1,\ldots) $ in the following way: $Ox=\left( y_1,y_2,\ldots \right)$, where
	\begin{eqnarray*}
		y_p=
		\begin{cases}
			x_p+1\,(\bmod\,n_p),&\text{if}~p\leq\min\left\{i:x_i<n_i-1 \right\},
			\\
			x_p,&\text{if}~p>\min\left\{i:x_i<n_i-1 \right\}.
		\end{cases}
	\end{eqnarray*}
	Also, define $O$ at $(n_1-1,n_2-1,\ldots,n_k-1,\ldots)$ as
	\begin{eqnarray*}
		O(n_1-1,n_2-1,\ldots,n_k-1,\ldots)=(0,0,\ldots,0,\ldots).
	\end{eqnarray*}
	The following fact is immediate.
	\begin{Lm}\label{periodic_approximations}
		Let $x=(x_1,x_2,\ldots)\in\mathbb{X}_{\widehat{\mathbf{n}}}$ and $O^mx=\left(\left( O^mx \right)_1,  \left( O^mx \right)_2, \ldots\right)$. Then
		\begin{itemize}
			\item[\rm a)] the following equalities hold: $\,^j\!x=\,^j\!\!\left( O^{N_j}x\right)$, $O^m\left(\,^j\!\!\mathbb{A}_{\left(^j\!x\right)}\right)=\,^j\!\!\mathbb{A}_{\left( ^{^j}\!\!\left(O^mx\right) \right)}$ and $\bigcup\limits_{m=0}^{N_j-1}\,^j\!\!\mathbb{A}_{\left( ^{^j}\!\!\left(O^mx\right) \right)}=\mathbb{X}_{\widehat{\mathbf{n}}}$;
			
			\item[\rm b)] for the map ${^{^j}_x}\!O$, defined as follows
			\begin{eqnarray*}
				{^{^j}_x}\!Oz=
				\begin{cases}
					Oz, &\text{if}~z\in\bigcup\limits_{m=0}^{N_j-2} O^m\left(\,^j\!\!\mathbb{A}_{\left( ^{^j}\!\!x\right)}\right),
					\\
					O^{-(N_j-1)}z, &\text{if}~z\in  O^{N_j-1}\left(\,^j\!\!\mathbb{A}_{\left( ^{^j}\!\!x\right)}\right),
				\end{cases}
			\end{eqnarray*}
			where $z=(z_1,z_2,\ldots)\in\mathbb{X}_{\widehat{\mathbf{n}}}$, the period of each $z\in\mathbb{X}_{\widehat{\mathbf{n}}}$ equals $N_j$;
			
			\item[\rm c)] for the element $\mathbf{0}=(0,0,\ldots)\in\mathbb{X}_{\widehat{\mathbf{n}}}$ we have ${^{^j}_{\scriptscriptstyle{\mathbf{0}}}}\!Oz=\left( \left({^{^j}_{\scriptscriptstyle{\mathbf{0}}}}\!Oz\right)_1,\left( {^{^j}_{\scriptscriptstyle{\mathbf{0}}}}\!Oz\right)_2,\ldots,\left( {^{^j}_{\scriptscriptstyle{\mathbf{0}}}}\!Oz \right)_j,z_{j+1},z_{j+2},\ldots\right)$, where
			\begin{eqnarray}\label{odometer_periodic}
				\left( {^{^j}_{\scriptscriptstyle{\mathbf{0}}}}\!Oz \right)_p=
				\begin{cases}
					z_p+1\,(\bmod\,n_p), &\text{if}~ p\leq\min\left\{i:z_i<n_i-1 \right\}\leq j,
					\\
					z_p, &\text{if}~ \min\left\{i:z_i<n_i-1 \right\}<p\leq j,
					\\
					0, &\text{if}~p\leq j<\min\left\{i:z_i<n_i-1 \right\}.
				\end{cases}
			\end{eqnarray}
		\end{itemize}
	\end{Lm}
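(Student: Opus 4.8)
The plan is to reduce the whole lemma to one elementary fact: $O$ is the ``add one with carry'' map of the mixed-radix number system with digit bases $n_1,n_2,\dots$. Set $N_0=1$ and, for each $j$, introduce
\[
\phi_j\colon \mathbb{X}_{\widehat{\mathbf{n}}}\to \mathbb{Z}/N_j\mathbb{Z},\qquad \phi_j(x)=\sum_{k=1}^{j}x_k\,N_{k-1}\ (\bmod\,N_j).
\]
Since the bases are $n_1,\dots,n_j$, the map $\phi_j$ depends only on $\,^j x$ and sets up a bijection between $\prod_{k=1}^{j}\mathbb{X}_{n_k}$ and $\mathbb{Z}/N_j\mathbb{Z}$; in particular $\,^j x$ is recovered from $\phi_j(x)$. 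The one computation I would carry out is that $\phi_j(Ox)=\phi_j(x)+1$ for every $x$. Indeed, writing $m^{\ast}=\min\{i:x_i<n_i-1\}$, the carry zeroes the first $m^{\ast}-1$ digits and advances the $m^{\ast}$-th; using $N_{k-1}(n_k-1)=N_k-N_{k-1}$, the telescoping identity $\sum_{k=1}^{m^{\ast}-1}(n_k-1)N_{k-1}=N_{m^{\ast}-1}-1$ shows that the net change of $\phi_j$ equals $+1$ in $\mathbb{Z}/N_j\mathbb{Z}$ in every case—the wrap-around $N_j-1\mapsto 0$ occurring precisely when the first $j$ digits are maximal, including the exceptional point $(n_1-1,n_2-1,\dots)\mapsto(0,0,\dots)$. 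Iterating gives $\phi_j(O^m x)=\phi_j(x)+m$ for all $m\in\mathbb{Z}$, where negative powers make sense because $O\in\Aut_0(\mathbb{X}_{\widehat{\mathbf{n}}},\nu_{\widehat{\mathbf{n}}})$ is invertible.

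Part~a) is then read off directly. For the first equality, $\phi_j(O^{N_j}x)=\phi_j(x)+N_j=\phi_j(x)$, and injectivity of $\phi_j$ on the first $j$ coordinates gives $\,^j(O^{N_j}x)=\,^j x$. For the second, any $z$ with $\,^j z=\,^j x$ has $\phi_j(O^m z)=\phi_j(z)+m=\phi_j(O^m x)$, whence $O^m z\in\,^j\!\mathbb{A}_{\,^j(O^m x)}$, giving one inclusion; the reverse one follows by the same argument applied to $O^{-m}$ to any $w$ in the target cylinder. For the third, as $m=0,1,\dots,N_j-1$ the residues $\phi_j(x)+m$ exhaust $\mathbb{Z}/N_j\mathbb{Z}$, so the first $j$ coordinates $\,^j(O^m x)$ run through all of $\prod_{k=1}^{j}\mathbb{X}_{n_k}$ and the cylinders cover $\mathbb{X}_{\widehat{\mathbf{n}}}$.

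For part~b) I would use a2) and a3), which say that the $N_j$ sets $B_m=O^m\!\left(\,^j\!\mathbb{A}_{\,^j x}\right)$, $m=0,1,\dots,N_j-1$, form a partition of $\mathbb{X}_{\widehat{\mathbf{n}}}$, with $B_m=\{\,z:\phi_j(z)=\phi_j(x)+m\,\}$. Writing an arbitrary point uniquely as $z=O^{m_0}w$ with $w\in B_0$, I track the orbit of $w$ under ${^{^j}_x}\!O$: on $B_0,\dots,B_{N_j-2}$ the map acts as $O$, so the iterates are $w,Ow,\dots,O^{N_j-1}w$, lying in the successive blocks and hence pairwise distinct; on the last block $B_{N_j-1}$ the map acts as $O^{-(N_j-1)}$ and returns $O^{N_j-1}w$ to $w$. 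Thus $w$ lies on a single ${^{^j}_x}\!O$-cycle of length exactly $N_j$, and since $z=O^{m_0}w$ is one of the points of this cycle, the period of $z$ is $N_j$ as well.

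Part~c) is the special case $x=\mathbf{0}$, unwound explicitly. Here $B_m=\{z:\phi_j(z)=m\}$ and the last block is $B_{N_j-1}=\{z:z_1=n_1-1,\dots,z_j=n_j-1\}=\{z:\min\{i:z_i<n_i-1\}>j\}$. Off this block one has ${^{^j}_{\scriptscriptstyle{\mathbf{0}}}}\!O=O$, and the defining formula for $O$ produces precisely the first two cases of \eqref{odometer_periodic} while fixing the coordinates beyond $j$. The only point that is not a pure formality—and hence the main obstacle—is the last block, where ${^{^j}_{\scriptscriptstyle{\mathbf{0}}}}\!O=O^{-(N_j-1)}$: I must verify that this resets $z_1,\dots,z_j$ to $0$ \emph{without moving} the coordinates $z_{j+1},z_{j+2},\dots$, which is the content of the third case of \eqref{odometer_periodic}. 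I would check this by running $O$ forward from the candidate preimage $z'$ having first $j$ digits $0$ and $z'_p=z_p$ for $p>j$: starting from all-zero first digits, the $N_j-1$ increments never carry past position $j$ (the first such carry occurs only at the $N_j$-th step), so $O^{N_j-1}z'=z$, i.e.\ $O^{-(N_j-1)}z=z'$. This carry-bookkeeping is the one place requiring care; everything else is immediate from the single identity $\phi_j\circ O=\phi_j+1$.
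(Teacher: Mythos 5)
Your proof is correct. The paper offers no argument for this lemma at all --- it is introduced with ``The following fact is immediate'' --- so there is nothing authorial to compare against; your identification $\phi_j\circ O=\phi_j+1$ on $\mathbb{Z}/N_j\mathbb{Z}$ is exactly the standard mixed-radix odometer bookkeeping the authors implicitly take for granted, and all three parts, including the one genuinely delicate point (that $O^{-(N_j-1)}$ on the last block $B_{N_j-1}$ resets the digits $z_1,\dots,z_j$ to $0$ without disturbing the coordinates beyond $j$, because no carry passes position $j$ before the $N_j$-th step), are verified correctly.
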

	
	Define an invariant metric $\rho$ on the group $\Aut_0\left( \mathbb{X}_{\widehat{\mathbf{n}}},\nu_{\widehat{\mathbf{n}}} \right)$  as follows
	\begin{eqnarray}\label{metr}
		\rho(\alpha,\beta)=\nu_{\widehat{\mathbf{n}}}\left( x\in \mathbb{X}_{\widehat{\mathbf{n}}}:\alpha(x)\neq\beta(x) \right),\;\alpha, \beta\in \Aut_0\left( \mathbb{X}_{\widehat{\mathbf{n}}},\nu_{\widehat{\mathbf{n}}} \right).
	\end{eqnarray}
	For an automorphism $\alpha\in\Aut_0\left( \mathbb{X}_{\widehat{\mathbf{n}}},\nu_{\widehat{\mathbf{n}}} \right)$ denote by $\left[\alpha\right]$ the subgroup in $\Aut_0\left( \mathbb{X}_{\widehat{\mathbf{n}}},\nu_{\widehat{\mathbf{n}}} \right)$ defined as follows:
	$\beta\in\left[\alpha\right]$, if for almost all $z\in\mathbb{X}_{\widehat{\mathbf{n}}}$ the equality
	\begin{eqnarray}\label{full_GR}
		\beta(z)=\alpha^{d(\beta,z)}(z)
	\end{eqnarray}
	holds, where $d(\beta,\cdot)$ is a measurable function on $\left( \mathbb{X}_{\widehat{\mathbf{n}}},\nu_{\widehat{\mathbf{n}}} \right)$ with values in $\mathbb{Z}$.
Denote by $\Sigma_j$ a $\sigma$-algebra on $\mathbb{X}_{\widehat{\mathbf{n}}}$ generated by collection of the cylindric subsets $\left\{ \,^j\!\!\mathbb{A}_y \right\}$, ${y\in \prod\limits_{k=1}^j\mathbb{X}_{n_k}}$.
	
	\begin{Lm}\label{free_action}
		Let $\overline{\mathfrak{S}}_{\widehat{\mathbf{n}}}$ be the closure of the group $\mathfrak{S}_{\widehat{\mathbf{n}}}$ with respect to the metric $\rho$. Then
		\begin{itemize}
			\item[\rm a)] $\mathfrak{S}_{N_j}=\left\{\beta\in\left[  {^{^j}_{\scriptscriptstyle{\mathbf{0}}}}\!O\right]:\beta\Sigma_j=\Sigma_j\right\}$;
			
			\item[\rm b)] $O\in \overline{\mathfrak{S}}_{\widehat{\mathbf{n}}}$;
			
			\item[\rm c)] the action of automorphism $O$ on $\left(\mathbb{X}_{\widehat{\mathbf{n}}},\nu_{\widehat{\mathbf{n}}}\right)$ is ergodic;
			
			\item[\rm d)] for every $l\in \mathbb{Z}\setminus 0$ the automorphism $O^l$ acts freely on $\mathbb{X}_{\widehat{\mathbf{n}}}$; i.e. if there is an $x\in\mathbb{X}_{\widehat{\mathbf{n}}}$ such that $O^lx=x$, then $l=0$.
		\end{itemize}
	\end{Lm}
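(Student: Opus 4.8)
The four assertions can be treated essentially independently, each reducing to a structural property of the odometer $O$ together with the explicit truncations ${^{^j}_{\scriptscriptstyle{\mathbf{0}}}}\!O$ supplied by Lemma~\ref{periodic_approximations}. The unifying observation is that, by part c) of Lemma~\ref{periodic_approximations}, the truncation $\alpha:={^{^j}_{\scriptscriptstyle{\mathbf{0}}}}\!O$ fixes every coordinate of index $>j$ and acts on the first $j$ coordinates exactly as the single $N_j$-cycle $c\colon i\mapsto i+1\ (\mathrm{mod}\ N_j)$ on $\mathbb{X}_{N_j}\cong\mathbb{Z}/N_j\mathbb{Z}$; in particular $\alpha$ has period $N_j$.

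For part (a) I would prove the two inclusions separately. For $\mathfrak{S}_{N_j}\subseteq\{\beta\in[\alpha]:\beta\Sigma_j=\Sigma_j\}$: a permutation $\sigma\in\mathfrak{S}_{N_j}$ fixes the tail and hence preserves $\Sigma_j$, and since $c$ acts transitively on $\mathbb{X}_{N_j}$ there is for each $y\in\mathbb{X}_{N_j}$ an exponent $d(y)$ with $c^{d(y)}(y)=\sigma(y)$; setting $d(z)=d(\,^j\!z)$ gives $\sigma z=\alpha^{d(z)}z$, so $\sigma\in[\alpha]$. For the reverse inclusion, take $\beta\in[\alpha]$ with $\beta\Sigma_j=\Sigma_j$. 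The relation $\beta z=\alpha^{d(z)}z$ shows $\beta$ fixes every coordinate of index $>j$, so $\beta$ preserves tails. The requirement that $\beta$ map each block $\,^j\!\!\mathbb{A}_y$ to a $\Sigma_j$-set then forces the first-$j$-coordinate value $c^{d(z)}(y)$ to be a.e.\ independent of the tail of $z$ on $\,^j\!\!\mathbb{A}_y$, i.e.\ $d(z)\ (\mathrm{mod}\ N_j)$ is a.e.\ constant on each block (only $d(z)\bmod N_j$ matters since $\alpha$ has period $N_j$). Hence $\beta$ induces a genuine bijection $y\mapsto y'$ of $\mathbb{X}_{N_j}$, realized by $\beta$ via the diagonal embedding, so $\beta\in\mathfrak{S}_{N_j}$. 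This constancy step is the one place where the hypothesis $\beta\Sigma_j=\Sigma_j$ is genuinely used, and it is the main obstacle in the whole lemma.

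Part (b) is then immediate from (a): each truncation ${^{^j}_{\scriptscriptstyle{\mathbf{0}}}}\!O$ lies in $\mathfrak{S}_{N_j}\subset\mathfrak{S}_{\widehat{\mathbf{n}}}$, and by its definition (part b) of Lemma~\ref{periodic_approximations} with $x=\mathbf{0}$) it agrees with $O$ except on the single block $O^{N_j-1}(\,^j\!\!\mathbb{A}_{\mathbf{0}})$, which has measure $1/N_j$. Thus $\rho(O,{^{^j}_{\scriptscriptstyle{\mathbf{0}}}}\!O)\le 1/N_j\to0$, so $O\in\overline{\mathfrak{S}}_{\widehat{\mathbf{n}}}$. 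For part (c) I would run the standard odometer averaging argument. Let $A$ be $O$-invariant with $\nu_{\widehat{\mathbf{n}}}(A)>0$. By part a) of Lemma~\ref{periodic_approximations} the sets $O^m(\,^j\!\!\mathbb{A}_{\mathbf{0}})$, $m=0,\dots,N_j-1$, partition $\mathbb{X}_{\widehat{\mathbf{n}}}$ into $N_j$ cylinders of measure $1/N_j$. Given $\varepsilon>0$, a martingale (conditional-density) argument yields a cylinder $C=\,^j\!\!\mathbb{A}_y$ with $\nu_{\widehat{\mathbf{n}}}(A\cap C)>(1-\varepsilon)\nu_{\widehat{\mathbf{n}}}(C)$. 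Invariance gives $\nu_{\widehat{\mathbf{n}}}(A\cap O^mC)=\nu_{\widehat{\mathbf{n}}}(A\cap C)$ for all $m$, and summing over the partition,
\[
\nu_{\widehat{\mathbf{n}}}(A)=\sum_{m=0}^{N_j-1}\nu_{\widehat{\mathbf{n}}}(A\cap O^mC)=N_j\,\nu_{\widehat{\mathbf{n}}}(A\cap C)>1-\varepsilon .
\]
Letting $\varepsilon\to0$ forces $\nu_{\widehat{\mathbf{n}}}(A)=1$.

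Finally, for part (d) I would use that $O$ is addition of $1$ in the profinite structure of $\mathbb{X}_{\widehat{\mathbf{n}}}$. Define for each $k$ the surjection $\pi_k\colon\mathbb{X}_{\widehat{\mathbf{n}}}\to\mathbb{Z}/N_k\mathbb{Z}$ by $\pi_k(x)=\sum_{i=1}^k x_i N_{i-1}\ (\mathrm{mod}\ N_k)$, with $N_0=1$. The carry definition of $O$ (including the special value at $(n_1-1,n_2-1,\dots)$, which is exactly the carry $(n_1-1,\dots)+1=\mathbf{0}$) gives $\pi_k(Ox)=\pi_k(x)+1\ (\mathrm{mod}\ N_k)$, whence $\pi_k(O^lx)=\pi_k(x)+l\ (\mathrm{mod}\ N_k)$ for all $l\in\mathbb{Z}$. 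If $O^lx=x$ then $l\equiv0\ (\mathrm{mod}\ N_k)$ for every $k$; since $n_k>1$ we have $N_k\to\infty$, forcing $l=0$. Parts (b)–(d) are routine once the structure of $O$ and its truncations is in place; the constancy argument in (a) is the only delicate point.
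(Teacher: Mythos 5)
Your proposal is correct, and for parts a), b), d) it follows the same skeleton as the paper, which disposes of these items in one line each by citing Lemma~\ref{periodic_approximations} and ``the definition of $O$'': your verification that ${^{^j}_{\scriptscriptstyle{\mathbf{0}}}}\!O$ acts on the first $j$ coordinates as the $N_j$-cycle, the estimate $\rho\bigl(O,{^{^j}_{\scriptscriptstyle{\mathbf{0}}}}\!O\bigr)\le 1/N_j$, and the mixed-radix projections $\pi_k$ with $\pi_k(O^l x)=\pi_k(x)+l\ (\mathrm{mod}\ N_k)$ are exactly the details the authors suppress, and your reverse inclusion in a) --- that $\beta\Sigma_j=\Sigma_j$ forces $d(\beta,\cdot)\ (\mathrm{mod}\ N_j)$ to be a.e.\ constant on each block, since otherwise the image of a block would meet two distinct blocks in proper positive-measure pieces and so could not lie in $\Sigma_j$ --- correctly fills the one step the paper glosses over. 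The genuine divergence is in part c): the paper deduces ergodicity of $O$ from ergodicity of the $\overline{\mathfrak{S}}_{\widehat{\mathbf{n}}}$-action, using that every $\sigma\in\mathfrak{S}_{\widehat{\mathbf{n}}}$ satisfies $\sigma(x)=O^{d(\sigma,x)}(x)$ (so $O$-invariant sets are group-invariant) together with part b) (so group-invariant sets are $O$-invariant), whereas you run the classical odometer argument directly: a martingale/density choice of a cylinder $C$ with $\nu_{\widehat{\mathbf{n}}}(A\cap C)>(1-\varepsilon)\nu_{\widehat{\mathbf{n}}}(C)$, plus the fact from Lemma~\ref{periodic_approximations}~a) that $\{O^mC\}_{m=0}^{N_j-1}$ partitions $\mathbb{X}_{\widehat{\mathbf{n}}}$, yielding $\nu_{\widehat{\mathbf{n}}}(A)>1-\varepsilon$. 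Your route is self-contained and quantitative, making no appeal to ergodicity of the group action (which the paper itself leaves unproved); the paper's route is shorter in context because it reuses part b) and the full-group formalism $[O]=\overline{\mathfrak{S}_{\widehat{\mathbf{n}}}}$ of \eqref{full_GR}--\eqref{degree_unique}, which is needed later anyway. Both are sound; your write-up is arguably the more complete proof of the lemma as stated.
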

	
	\begin{proof}
		The property {\rm a)} is a consequence of Lemma \ref{periodic_approximations} {\rm c)}. From the parts {\rm b)} and {\rm c)} of Lemma \ref{periodic_approximations} we have
		\begin{eqnarray*}
			\nu_{\widehat{\mathbf{n}}}\left(x\in\mathbb{X}_{\widehat{\mathbf{n}}}:Ox\neq  {^{^j}_{\scriptscriptstyle{\mathbf{0}}}}\!Ox \right)\leq\frac{1}{N_j}.
		\end{eqnarray*}
		Taking this and part {\rm a)} into account, we obtain the part {\rm b)} of Lemma \ref{free_action}. Therefore, the ergodicity of the automorphism $O$ is equivalent to the ergodicity of the action of $\overline{\mathfrak{S}}_{\widehat{\mathbf{n}}}$. And finally, the property {\rm d)}  follows from the definition of the automorphism $O$.
	\end{proof}
	
	Lemma \ref{periodic_approximations} ({\rm b},{\rm c}) and Lemma \ref{free_action} ({\rm a}) imply that action of each automorphism $\sigma\in\mathfrak{S}_{\widehat{\mathbf{n}}}$ on $x\in \mathbb{X}_{\widehat{\mathbf{n}}}$ can be expressed in the following way
	\begin{eqnarray}\label{degree_unique}
		\sigma(x)=O^{d(\sigma,x)}(x),
	\end{eqnarray}
	where $d(\sigma,x)\in\mathbb{Z}$. The uniqueness of the function $d(\sigma,\cdot)$ in \eqref{degree_unique} is a consequence of Lemma \ref{free_action} ({\rm d}). Note that if  $\gamma,\sigma\in\mathfrak{S}_{\widehat{\mathbf{n}}}$, then
	\begin{eqnarray}\label{cocycle}
		d(\gamma\sigma,x)=d(\gamma,\sigma(x))+d(\sigma,x).
	\end{eqnarray}
	
	\begin{Rem}
		It follows from (\ref{full_GR}) and (\ref{degree_unique}) that $[O]=\overline{\mathfrak{S}_{\widehat{\mathbf{n}}}}$.
	\end{Rem}
	
	\subsection{Construction of a ${\rm II}_1$ factor representation of the group $\overline{\mathfrak{S}}_{\widehat{\mathbf{n}}}$}\label{factor_repr_constr}
	
	In the Hilbert space $\mathcal{H}=L^2\left( \mathbb{X}_{\widehat{\mathbf{n}}},\nu_{\widehat{\mathbf{n}}} \right)\otimes l^2(\mathbb{Z})$ define the unitary operator $\mathcal{F}(\sigma)$, where $\sigma\in\mathfrak{S}_{\widehat{\mathbf{n}}}$, as follows:
	\begin{eqnarray}\label{fundamental_repr}
		\left( \mathcal{F}(\sigma)\eta\right)(x,m)=\eta\left(\sigma^{-1}(x),m-d(\sigma^{-1},x)\right) \text{ for all } \eta\in \mathcal{H}.
	\end{eqnarray}
	Equality  \eqref{cocycle} implies that the map $\sigma\ni\mathfrak{S}_{\widehat{\mathbf{n}}}\mapsto \mathcal{F}(\sigma)$ is a unitary representation of the group $\mathfrak{S}_{\widehat{\mathbf{n}}}$, which can be extended by continuity with respect to the metric $\rho$ (see \eqref{metr}) to the representation of the group $\overline{\mathfrak{S}}_{\widehat{\mathbf{n}}}$. Thus   \eqref{fundamental_repr} define the representation of the group  $\overline{\mathfrak{S}}_{\widehat{\mathbf{n}}}$.
	
	Denote by $\mathcal{B}(\mathcal{H})$ the set of all bounded linear operators acting on $\mathcal{H}$.
	Put
	$$
	\mathcal{F}\left( \mathfrak{S}_{\widehat{\mathbf{n}}}\right)^\prime=\left\{A\in \mathcal{B}(\mathcal{H}): AB=BA \text{ for all }\; B\in \mathcal{F}\left( \mathfrak{S}_{\widehat{\mathbf{n}}}\right)\right\}.
	$$
	Denote by $\mathcal{F}\left( \mathfrak{S}_{\widehat{\mathbf{n}}}\right)^{\prime\prime}$ the $w^*$-algebra generated by operators $\mathcal{F}\left( \mathfrak{S}_{\widehat{\mathbf{n}}}\right)$.
	Let $\gimel$ be the function on $\mathbb{X}_{\widehat{\mathbf{n}}}$ that is identically one on $\mathbb{X}_{\widehat{\mathbf{n}}}$. Define function $\delta_i$ on $\mathbb{Z}$, where $i\in\mathbb{Z}$ as
	$\delta_i(m)=\left\{
	\begin{array}{rl}
		1,\text{ if } m=i\\
		0,\text{ if } m\neq i.
	\end{array}
	\right.$
	Put $\xi_0=\gimel\otimes\delta_0$. It is easy to check that
	\begin{eqnarray*}
		\left(\mathcal{F}(s)\mathcal{F}(\sigma)\xi_0,\xi_0\right)=\left(\mathcal{F}(\sigma)\mathcal{F}(s)\xi_0,\xi_0\right) \text{ for all } \sigma,s \in \mathfrak{S}_{\widehat{\mathbf{n}}}.
	\end{eqnarray*}
	Thus, the vector state $\tr$ on $\mathcal{F}\left(\mathfrak{S}_{\widehat{\mathbf{n}}}\right)^{\prime\prime}$ defined as
	\begin{eqnarray}\label{trace}
		\tr(A)=\left( A\xi_0,\xi_0 \right),\;A\in \mathcal{F}\left(\mathfrak{S}_{\widehat{\mathbf{n}}}\right)^{\prime\prime},
	\end{eqnarray}
	is  central. Namely, the equality $\tr (AB)=\tr (BA)$ holds for all $A,B\in\mathcal{F}\left(\mathfrak{S}_{\widehat{\mathbf{n}}}\right)^{\prime\prime}$.
	In particular, it follows from (\ref{chi_nat}) that for any $s\in\mathfrak{S}_{\widehat{\mathbf{n}}}$ we have	
\begin{eqnarray}\label{natural_character}
		\chi_{\nat}(s)=\tr \left(\mathcal{F}(s)\right).
	\end{eqnarray}
	Indeed, Lemma \ref{free_action} ({\rm d}) and formulas \eqref{fundamental_repr} and \eqref{chi_nat} imply that for $\sigma\in\mathfrak{S}_{N_k}\subset\mathfrak{S}_{\widehat{\mathbf{n}}}$ we have
	\begin{eqnarray}\label{character_fundamental}
		\tr \left(\mathcal{F}(\sigma)\right)=\nu_{\widehat{\mathbf{n}}}\left( x\in \mathbb{X}_{\widehat{\mathbf{n}}}:\sigma x=x\right)=\frac{\#\left\{ x\in\mathbb{X}_{N_k}:\sigma x=x\right\}}{N_k}=\chi_{\nat}(\sigma).
	\end{eqnarray}

	Now consider two families of operators $\left\{\mathfrak{M}^\prime(f) \right\}_{f\in L^\infty\left( \mathbb{X}_{\widehat{\mathbf{n}}},\nu_{\widehat{\mathbf{n}}} \right)}$ and $\left\{\mathcal{F}^\prime(O^k) \right\}_{k\in\mathbb{Z}}$ which belong to $\mathcal{F}\left( \mathfrak{S}_{\widehat{\mathbf{n}}}\right)^\prime$ and whose action on an element $\eta\in\mathcal{H}$  is defined in the following way:
	\begin{equation}\label{right_component}
		\left(\mathfrak{M}^\prime(f)\eta\right)(x,m)=f\left( O^mx\right)\eta(x,m),\;\left( \mathcal{F}'(O^k)\eta \right)(x,m)=\eta(x,m-k).
	\end{equation}
	Using \eqref{fundamental_repr}, one can check that $\mathfrak{M}^\prime(f)$ and $\mathcal{F}'(O^k)$ belong to $\mathcal{F}\left( \mathfrak{S}_{\widehat{\mathbf{n}}}\right)^\prime$. Thus, we obtain the following statement:
	\begin{Lm}\label{cyclic_lemma}
    		Let $\mathcal{N}^\prime$ be the $w^*$-subalgebra of $\mathcal{F}\left( \mathfrak{S}_{\widehat{\mathbf{n}}}\right)^\prime$, which is generated by the operators $\left\{\mathfrak{M}^\prime(f)\right\}_{f\in L^\infty\left( \mathbb{X}_{\widehat{\mathbf{n}}},\nu_{\widehat{\mathbf{n}}} \right)}$ and $\mathcal{F}'(O)$. Then, vector $\xi_0$ is cyclic for $\mathcal{N}^\prime$, i.e. the closure of the set $\mathcal{N}^\prime\xi_0$ coincides with $\mathcal{H}$.
	\end{Lm}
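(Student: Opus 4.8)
The plan is to produce, from a single two–parameter family of elements of $\mathcal{N}'$ applied to $\xi_0$, a total subset of $\mathcal{H}$; since $\mathcal{N}'\xi_0$ is a linear subspace (as $\mathcal{N}'$ is an algebra), its closure will then automatically be all of $\mathcal{H}$. Concretely, I would work with the vectors $\mathfrak{M}'(f)\,\mathcal{F}'(O^k)\,\xi_0$ for $f\in L^\infty\left(\mathbb{X}_{\widehat{\mathbf{n}}},\nu_{\widehat{\mathbf{n}}}\right)$ and $k\in\mathbb{Z}$. First I would note that these lie in $\mathcal{N}'$: positive powers $\mathcal{F}'(O^k)$ are products of the generator $\mathcal{F}'(O)$, negative powers are its adjoint powers (the shift $\mathcal{F}'(O)$ is unitary and $\mathcal{N}'$ is $*$-closed), and, $\mathcal{N}'$ being an algebra, each product $\mathfrak{M}'(f)\,\mathcal{F}'(O^k)$ belongs to $\mathcal{N}'$.

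Next I would carry out the direct computation of the action on $\xi_0=\gimel\otimes\delta_0$ using \eqref{right_component}. Since $\mathcal{F}'(O^k)$ only shifts the $\mathbb{Z}$-coordinate, $\mathcal{F}'(O^k)\xi_0=\gimel\otimes\delta_k$, and applying $\mathfrak{M}'(f)$ multiplies the value at $(x,m)$ by $f\left(O^m x\right)$; on the single surviving layer $m=k$ this gives
\begin{equation*}
\mathfrak{M}'(f)\,\mathcal{F}'(O^k)\,\xi_0=\left(f\circ O^k\right)\otimes\delta_k .
\end{equation*}
Thus each such vector is supported on the $k$-th layer $L^2\left(\mathbb{X}_{\widehat{\mathbf{n}}},\nu_{\widehat{\mathbf{n}}}\right)\otimes\mathbb{C}\delta_k$ and there realizes the function $f\circ O^k$.

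The crux is then the observation that $O$ is an invertible, measure–preserving automorphism (Lemma \ref{free_action}), so precomposition $f\mapsto f\circ O^k$ is a surjective isometry of $L^2\left(\mathbb{X}_{\widehat{\mathbf{n}}},\nu_{\widehat{\mathbf{n}}}\right)$; in particular $\left\{f\circ O^k:f\in L^\infty\right\}$ is dense in $L^2\left(\mathbb{X}_{\widehat{\mathbf{n}}},\nu_{\widehat{\mathbf{n}}}\right)$ (to hit a target $g$, approximate $g\circ O^{-k}$ in $L^2$ by $L^\infty$ functions $f_n$ and use that $O^k$ preserves the $L^2$-norm). Hence the closed linear span of the vectors on the $k$-th layer is the entire layer $L^2\left(\mathbb{X}_{\widehat{\mathbf{n}}},\nu_{\widehat{\mathbf{n}}}\right)\otimes\mathbb{C}\delta_k$, and since $\left\{\delta_k\right\}_{k\in\mathbb{Z}}$ is an orthonormal basis of $l^2(\mathbb{Z})$, letting $k$ range over $\mathbb{Z}$ exhausts $\bigoplus_{k}L^2\left(\mathbb{X}_{\widehat{\mathbf{n}}},\nu_{\widehat{\mathbf{n}}}\right)\otimes\mathbb{C}\delta_k=\mathcal{H}$, which is exactly the asserted cyclicity. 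I do not expect a genuine obstacle here: the argument uses only elementary products of the generators rather than the full $w^*$-closure of $\mathcal{N}'$, and the one structural fact it genuinely relies on is the invertibility of the odometer $O$, which is what makes precomposition act densely on each layer.
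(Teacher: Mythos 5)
Your proof is correct and is exactly the verification the paper leaves implicit: Lemma \ref{cyclic_lemma} is stated there as an immediate consequence of \eqref{right_component}, and the intended argument is your computation $\mathfrak{M}'(f)\,\mathcal{F}'(O^k)\,\xi_0=\left(f\circ O^k\right)\otimes\delta_k$ combined with the density of $\left\{f\circ O^k: f\in L^\infty\left(\mathbb{X}_{\widehat{\mathbf{n}}},\nu_{\widehat{\mathbf{n}}}\right)\right\}$ in $L^2\left(\mathbb{X}_{\widehat{\mathbf{n}}},\nu_{\widehat{\mathbf{n}}}\right)$, which exhausts each layer $L^2\left(\mathbb{X}_{\widehat{\mathbf{n}}},\nu_{\widehat{\mathbf{n}}}\right)\otimes\mathbb{C}\delta_k$ and hence all of $\mathcal{H}$. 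Your care about negative powers (unitarity of $\mathcal{F}'(O)$ plus $*$-closedness of $\mathcal{N}'$) and about measure preservation of $O$ correctly fills in the details the paper omits.
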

	
	\begin{Lm}\label{mult_operator}
		Let $y\in \prod\limits_{k=1}^j\mathbb{X}_{n_k}$,  $\mathfrak{I}_{\,^j\!\!\mathbb{A}_y}(x)=\left\{
		\begin{array}{rl}
			1, \text{ if } x\in\,^j\!\!\mathbb{A}_y\\
			0,\text{ if } x\notin\,^j\!\!\mathbb{A}_y
		\end{array}
		\right.$ (see \eqref{cylinder}) and operator $\mathfrak{M}\left(\mathfrak{I}_{\,^j\!\!\mathbb{A}_y}\right)$ acts on $\eta\in\mathcal{H}$ as follows
		\begin{eqnarray}
			\left(\mathfrak{M}\left(\mathfrak{I}_{\,^j\!\!\mathbb{A}_y}\right)\eta\right)(x,m)=\mathfrak{I}_{\,^j\!\!\mathbb{A}_y}(x)\eta(x,m).
		\end{eqnarray}
		Then, operator $\mathfrak{M}\left(\mathfrak{I}_{\,^j\!\!\mathbb{A}_y}\right)$ belongs to algebra $\mathcal{F}\left( \mathfrak{S}_{\widehat{\mathbf{n}}}\right)^{\prime\prime}$.
	\end{Lm}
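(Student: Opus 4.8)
The plan is to realize $\mathfrak{M}\left(\mathfrak{I}_{\,^j\!\!\mathbb{A}_y}\right)$ as a strong-operator limit of explicit \emph{averages} of the unitaries $\mathcal{F}(\sigma)$, and then to upgrade convergence on the single vector $\xi_0$ to genuine SOT-convergence by exploiting the cyclicity furnished by Lemma \ref{cyclic_lemma}. I first record the cheap half of the argument: $\mathfrak{M}\left(\mathfrak{I}_{\,^j\!\!\mathbb{A}_y}\right)$ is multiplication by the function $(x,m)\mapsto\mathfrak{I}_{\,^j\!\!\mathbb{A}_y}(x)$, which is independent of $m$. Comparing with \eqref{right_component}, it commutes with every $\mathfrak{M}^\prime(f)$ (two multiplication operators) and with every $\mathcal{F}^\prime(O^k)$ (a translation in $m$, which is harmless on an $m$-independent multiplier). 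Hence $\mathfrak{M}\left(\mathfrak{I}_{\,^j\!\!\mathbb{A}_y}\right)$ commutes with all of $\mathcal{N}^\prime$.

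For the approximation, fix $J>j$ and let $H_J\subset\mathfrak{S}_{N_J}$ be the subgroup fixing pointwise every point of $\,^j\!\!\mathbb{A}_y\cap\mathbb{X}_{N_J}$ and acting as the full symmetric group on the remaining $N_J-N_J/N_j$ points; put $P_J=\frac{1}{|H_J|}\sum_{\sigma\in H_J}\mathcal{F}(\sigma)$. Being a finite convex combination of the unitaries $\mathcal{F}(\sigma)$, $\sigma\in\mathfrak{S}_{N_J}\subset\mathfrak{S}_{\widehat{\mathbf{n}}}$, we have $P_J\in\mathcal{F}\left(\mathfrak{S}_{\widehat{\mathbf{n}}}\right)^{\prime\prime}$ and $\|P_J\|\le1$. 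The evaluation of $P_J\xi_0$ rests on two facts: by \eqref{fundamental_repr} the vector $\mathcal{F}(\sigma)\xi_0$ is the indicator of the graph $\left\{\left(x,d(\sigma^{-1},x)\right)\right\}$; and for $\sigma\in\mathfrak{S}_{N_J}$ the cocycle $d(\sigma^{-1},x)$ depends only on $u:={}^Jx$ and equals $r(\sigma^{-1}u)-r(u)$, where $r(w)\in\{0,\dots,N_J-1\}$ is the odometer rank characterized by $O^{r(w)}(\mathbf 0,\text{tail})=(w,\text{tail})$ (this is the orbit description of Lemma \ref{periodic_approximations}). For $u\in\,^j\!\!\mathbb{A}_y$ every $\sigma\in H_J$ fixes $u$, so that term is concentrated at $m=0$; for $u\notin\,^j\!\!\mathbb{A}_y$ the point $\sigma^{-1}u$ is equidistributed over the $N_J-N_J/N_j$ points outside the cylinder, so $P_J\xi_0$ places its mass on exactly that many distinct values of $m$, each of height $1/(N_J-N_J/N_j)$.

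Because distinct ranks lie in distinct, hence mutually orthogonal, sectors of $l^2(\mathbb{Z})$, summing the squares and weighting each level-$J$ cylinder by its measure $1/N_J$ yields
$$\left\|P_J\xi_0-\mathfrak{M}\left(\mathfrak{I}_{\,^j\!\!\mathbb{A}_y}\right)\xi_0\right\|^2=\frac{1}{N_J}\cdot\left(N_J-\frac{N_J}{N_j}\right)\cdot\frac{1}{N_J-\frac{N_J}{N_j}}=\frac{1}{N_J}\xrightarrow[J\to\infty]{}0.$$
This is the step where the extra tensor factor $l^2(\mathbb{Z})$ does the real work: in the plain permutation representation the off-cylinder term would have norm close to $1$ and the scheme would fail, whereas here the $\mathbb{Z}$-valued cocycle scatters that term into orthogonal directions so that it vanishes in the limit.

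It remains to upgrade. Both $P_J$ and the target $A_0:=\mathfrak{M}\left(\mathfrak{I}_{\,^j\!\!\mathbb{A}_y}\right)$ commute with $\mathcal{N}^\prime$, and by Lemma \ref{cyclic_lemma} the vector $\xi_0$ is cyclic for $\mathcal{N}^\prime$. Thus for every $B^\prime\in\mathcal{N}^\prime$ we get $P_J\left(B^\prime\xi_0\right)=B^\prime\left(P_J\xi_0\right)\to B^\prime\left(A_0\xi_0\right)=A_0\left(B^\prime\xi_0\right)$, and together with the uniform bound $\|P_J\|\le1$ this forces $P_J\to A_0$ in the strong operator topology on all of $\mathcal{H}$. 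Since $\mathcal{F}\left(\mathfrak{S}_{\widehat{\mathbf{n}}}\right)^{\prime\prime}$ is SOT-closed, $A_0=\mathfrak{M}\left(\mathfrak{I}_{\,^j\!\!\mathbb{A}_y}\right)\in\mathcal{F}\left(\mathfrak{S}_{\widehat{\mathbf{n}}}\right)^{\prime\prime}$. I expect the only delicate point to be the middle step: correctly establishing $d(\sigma^{-1},x)=r(\sigma^{-1}u)-r(u)$ with its independence of the tail, and the equidistribution of $\sigma^{-1}u$ over the complement; the commutation and the upgrade are routine once $\xi_0$ is known to be cyclic for the commuting family $\mathcal{N}^\prime$.
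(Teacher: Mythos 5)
Your proof is correct, and it takes a genuinely different route from the paper's. The paper approximates $\mathfrak{M}\left(\mathfrak{I}_{\,^j\!\!\mathbb{A}_y}\right)$ by the single operators $\mathcal{F}\left(\,^k\!D_y\right)$, where $\,^k\!D_y$ acts as the identity on the cylinder and as $O^{N_k}$ off it; this automorphism lies only in the closure $\overline{\mathfrak{S}}_{\widehat{\mathbf{n}}}$, so that argument leans on the $\rho$-continuous extension of $\mathcal{F}$. There one computes $\mathcal{F}\left(\,^k\!D_y\right)\xi_0=\mathfrak{I}_{\,^j\!\!\mathbb{A}_y}\otimes\delta_0+\left(\gimel-\mathfrak{I}_{\,^j\!\!\mathbb{A}_y}\right)\otimes\delta_{N_k}$, whose off-cylinder part has constant norm $\sqrt{1-1/N_j}$ and dies only \emph{weakly} (because $\delta_{N_k}$ tends weakly to zero in $l^2(\mathbb{Z})$); the paper then upgrades weak convergence on $\xi_0$ to weak operator convergence via Lemma \ref{cyclic_lemma}, exactly the same cyclicity step you use at the end. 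You instead average $\mathcal{F}$ over the finite subgroups $H_J\subset\mathfrak{S}_{N_J}$ fixing the cylinder pointwise, so your approximants are honest elements of the group algebra of $\mathfrak{S}_{\widehat{\mathbf{n}}}$ and no extension to $\overline{\mathfrak{S}}_{\widehat{\mathbf{n}}}$ is needed; the equidistribution of $\sigma^{-1}u$ over the $N_J-N_J/N_j$ points outside the cylinder scatters the off-cylinder mass over that many mutually orthogonal levels of $l^2(\mathbb{Z})$, giving \emph{norm} convergence $\left\|P_J\xi_0-\mathfrak{M}\left(\mathfrak{I}_{\,^j\!\!\mathbb{A}_y}\right)\xi_0\right\|=N_J^{-1/2}\to0$ and hence strong (not merely weak) operator convergence after the cyclicity upgrade. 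The delicate points you flagged do close: $r$ is just the mixed-radix integer value of $u\in\mathbb{X}_{N_J}$, and since $|r(\sigma^{-1}u)-r(u)|<N_J$, neither the addition nor the subtraction carries past position $J$, so the tail is untouched and $d(\sigma^{-1},x)=r(\sigma^{-1}u)-r(u)$ indeed depends only on $u={}^J\!x$; injectivity of $r$ gives the distinctness of the $m$-levels, and the orbit--stabilizer count gives the equidistribution. In short: the paper's proof buys brevity (a one-line computation of $\mathcal{F}\left(\,^k\!D_y\right)\xi_0$ plus weak-null-ness of $\delta_{N_k}$), while yours buys a purely group-algebraic approximation and a stronger mode of convergence, at the cost of the cocycle and equidistribution bookkeeping.
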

	\begin{proof}
		Applying lemma \ref{periodic_approximations} ({\rm a}) we obtain that
		\begin{eqnarray}
			O^{N_k}\,^j\!\!\mathbb{A}_y=\,^j\!\!\mathbb{A}_y \text{ for all } k\geq j.
		\end{eqnarray}
		For $k\geq j$ define an automorphism $\,^k\!D_y$ as
		\begin{eqnarray}\label{def_D}
			\,^k\!D_yx=\left\{
			\begin{array}{rl}
				x, \text{ if } x\in\,^j\!\!\mathbb{A}_y\\
				O^{N_k}x,\text{ if } x\notin\,^j\!\!\mathbb{A}_y.
			\end{array}
			\right.
		\end{eqnarray}
		In view of Lemma \ref{free_action} ({\rm b}) the automorphism $\,^k\!D_y$ belongs to the group $\overline{\mathfrak{S}}_{\widehat{\mathbf{n}}}$. Since the representation $\mathcal{F}$ of $\mathfrak{S}_{\widehat{\mathbf{n}}}$ can be extended to a representation of $\overline{\mathfrak{S}}_{\widehat{\mathbf{n}}}$ (see \eqref{fundamental_repr}) it suffices to prove that
		\begin{eqnarray}\label{weak_lim}
			w-\lim\limits_{k\to\infty} \mathcal{F}\left(\,^k\!D_y\right)=\mathfrak{M}\left(\mathfrak{I}_{\,^j\!\!\mathbb{A}_y}\right).
		\end{eqnarray}
		Here ``$w-\lim$''\ stands for the limit in the weak operator topology.
		
		Using \eqref{fundamental_repr} and \eqref{def_D} we obtain
		\begin{eqnarray*}
			\mathcal{F}\left(\,^k\!D_y\right)\xi_0=\mathfrak{I}_{\,^j\!\!\mathbb{A}_y}\otimes\delta_0+\left(\gimel- \mathfrak{I}_{\,^j\!\!\mathbb{A}_y}\right)\otimes \delta_{N_k}.
		\end{eqnarray*}
From here, taking into account the weak convergence of the sequence  $\delta_{N_k}$ to zero-vector in $l^2(\mathbb{Z})$, we get that the sequence  $\mathcal{F}\left(\,^k\!D_y\right)\xi_0$ converges weakly  in $\mathcal{H}$ to the vector  $\mathfrak{I}_{\,^j\!\!\mathbb{A}_y}\otimes\delta_0=\mathfrak{M}\left(\mathfrak{I}_{\,^j\!\!\mathbb{A}_y}\right)\xi_0$

		Now \eqref{weak_lim} is a consequence of Lemma \ref{cyclic_lemma}.
	\end{proof}
	
	The following statement is a direct corollary of Lemma \ref{mult_operator}.
	\begin{Co}\label{Coll}
		The algebra $\mathcal{F}\left( \mathfrak{S}_{\widehat{\mathbf{n}}}\right)^{\prime\prime}$ contains the family of operators $\left\{\mathfrak{M}\left( f \right) \right\}_{f\in L^\infty\left( \mathbb{X}_{\widehat{\mathbf{n}}},\nu_{\widehat{\mathbf{n}}} \right)}$, acting on an element $\eta\in\mathcal{H}$ as follows
		\begin{eqnarray}\label{mult_oper}
			\left(\mathfrak{M}\left( f \right)\eta\right)(x,m)= f(x)\eta(x,m).
		\end{eqnarray}
		In particular, vector $\xi_0$ is a cyclic vector for the algebra $\mathcal{F}\left( \mathfrak{S}_{\widehat{\mathbf{n}}}\right)^{\prime\prime}$ (see Lemma \ref{cyclic_lemma}) and due to \eqref{fundamental_repr} the following relations hold
		\begin{eqnarray}\label{covariance}
			\mathcal{F}(\sigma)\mathfrak{M}(f)\left( \mathcal{F}(\sigma)\right)^{-1}=\mathfrak{M}(\,^\sigma \!\!f), \text{ where } \,^\sigma \!\!f(x)=f\left( \sigma^{-1}(x) \right), \;\sigma\in \overline{\mathfrak{S}}_{\widehat{\mathbf{n}}}.
		\end{eqnarray}
	\end{Co}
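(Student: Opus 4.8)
The plan is to leverage Lemma \ref{mult_operator} together with the fact that $\mathcal{F}\left(\mathfrak{S}_{\widehat{\mathbf{n}}}\right)^{\prime\prime}$ is a $w^*$-algebra, hence closed in the weak and strong operator topologies. First I would observe that for $f,g\in L^\infty$ one has $\mathfrak{M}(f)\mathfrak{M}(g)=\mathfrak{M}(fg)$ and $\mathfrak{M}(f)+\mathfrak{M}(g)=\mathfrak{M}(f+g)$, so the set of $f$ with $\mathfrak{M}(f)\in\mathcal{F}\left(\mathfrak{S}_{\widehat{\mathbf{n}}}\right)^{\prime\prime}$ is a self-adjoint subalgebra of $L^\infty\left(\mathbb{X}_{\widehat{\mathbf{n}}},\nu_{\widehat{\mathbf{n}}}\right)$. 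Since by Lemma \ref{mult_operator} it contains every cylinder indicator $\mathfrak{I}_{\,^j\!\!\mathbb{A}_y}$, it contains every \emph{cylinder simple function}, i.e. every finite linear combination of such indicators (equivalently, every $\Sigma_j$-measurable simple function, for some $j$).

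Next I would pass from cylinder simple functions to an arbitrary $f\in L^\infty\left(\mathbb{X}_{\widehat{\mathbf{n}}},\nu_{\widehat{\mathbf{n}}}\right)$ by approximation. Choosing a uniformly bounded sequence $g_n$ of cylinder simple functions with $g_n\to f$ almost everywhere (for instance the conditional expectations $\mathbb{E}[f\mid\Sigma_n]$, which are constant on the finitely many atoms of $\Sigma_n$ and converge a.e.\ by the martingale convergence theorem, with $\|g_n\|_\infty\le\|f\|_\infty$), dominated convergence gives $\left\|\left(\mathfrak{M}(g_n)-\mathfrak{M}(f)\right)\eta\right\|\to 0$ for every $\eta\in\mathcal{H}$, so $\mathfrak{M}(g_n)\to\mathfrak{M}(f)$ strongly. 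As $\mathcal{F}\left(\mathfrak{S}_{\widehat{\mathbf{n}}}\right)^{\prime\prime}$ is strongly closed, this yields $\mathfrak{M}(f)\in\mathcal{F}\left(\mathfrak{S}_{\widehat{\mathbf{n}}}\right)^{\prime\prime}$, proving the first assertion.

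For cyclicity I would use that $O\in\overline{\mathfrak{S}}_{\widehat{\mathbf{n}}}$ by Lemma \ref{free_action} (b), whence $\mathcal{F}(O)\in\mathcal{F}\left(\mathfrak{S}_{\widehat{\mathbf{n}}}\right)^{\prime\prime}$; since $d(O^{-1},\cdot)\equiv -1$, formula \eqref{fundamental_repr} gives $\mathcal{F}(O)^k\xi_0=\gimel\otimes\delta_{-k}$ and hence $\mathfrak{M}(f)\mathcal{F}(O)^k\xi_0=f\otimes\delta_{-k}$. Letting $f$ range over $L^\infty$ (dense in $L^2\left(\mathbb{X}_{\widehat{\mathbf{n}}},\nu_{\widehat{\mathbf{n}}}\right)$) and $k$ over $\mathbb{Z}$ (so that $\{\delta_{-k}\}$ runs through an orthonormal basis of $l^2(\mathbb{Z})$) produces a total set in $\mathcal{H}=L^2\left(\mathbb{X}_{\widehat{\mathbf{n}}},\nu_{\widehat{\mathbf{n}}}\right)\otimes l^2(\mathbb{Z})$, so $\xi_0$ is cyclic for $\mathcal{F}\left(\mathfrak{S}_{\widehat{\mathbf{n}}}\right)^{\prime\prime}$. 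Finally the covariance relation \eqref{covariance} is a direct computation: applying \eqref{fundamental_repr} to $\mathcal{F}(\sigma)\mathfrak{M}(f)\mathcal{F}(\sigma)^{-1}\eta$ produces the factor $f(\sigma^{-1}x)$ and a shift $m\mapsto m-d(\sigma^{-1},x)-d(\sigma,\sigma^{-1}x)$ in the $\mathbb{Z}$-variable; the cocycle identity \eqref{cocycle} in the form $d(\sigma^{-1},x)+d(\sigma,\sigma^{-1}x)=d(\id,x)=0$ cancels this shift and leaves $f(\sigma^{-1}x)\eta(x,m)$, i.e.\ $\mathfrak{M}(\,^\sigma\!\!f)\eta$, the identity extending from $\mathfrak{S}_{\widehat{\mathbf{n}}}$ to $\overline{\mathfrak{S}}_{\widehat{\mathbf{n}}}$ by $\rho$-continuity.

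The only genuinely non-formal point is the strong-operator approximation in the second paragraph; everything else is bookkeeping. I expect the main obstacle to be justifying the uniformly bounded a.e.\ approximation of $L^\infty$ functions by cylinder simple functions cleanly — the martingale convergence theorem handles it, but one could equally invoke that the $*$-algebra of cylinder simple functions is weakly dense in $L^\infty\left(\mathbb{X}_{\widehat{\mathbf{n}}},\nu_{\widehat{\mathbf{n}}}\right)$ acting on $L^2$, which is exactly the standard generation statement for the product $\sigma$-algebra.
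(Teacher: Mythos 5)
Your proposal is correct and follows essentially the route the paper intends: the paper states this as a ``direct corollary'' of Lemma \ref{mult_operator}, leaving implicit exactly the steps you supply --- passing from cylinder indicators to all of $L^\infty\left(\mathbb{X}_{\widehat{\mathbf{n}}},\nu_{\widehat{\mathbf{n}}}\right)$ by uniformly bounded a.e.\ approximation and strong closedness of the $w^*$-algebra, repeating the computation of Lemma \ref{cyclic_lemma} on the left side via $\mathfrak{M}(f)\mathcal{F}(O)^k\xi_0=f\otimes\delta_{-k}$, and verifying \eqref{covariance} directly from \eqref{fundamental_repr} and the cocycle identity \eqref{cocycle}. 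All three computations check out, including the cancellation $d(\sigma^{-1},x)+d(\sigma,\sigma^{-1}x)=0$, so nothing further is needed.
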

	\begin{Prop}\label{factor}
		The algebra $\mathcal{F}\left( \mathfrak{S}_{\widehat{\mathbf{n}}}\right)^{\prime\prime}$ is a ${\rm II}_1$-factor.
	\end{Prop}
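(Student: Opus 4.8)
The plan is to prove that $M:=\mathcal{F}\left(\mathfrak{S}_{\widehat{\mathbf{n}}}\right)''$ is a finite factor whose trace assumes a continuum of values on projections; these two facts together force $M$ to be of type $\mathrm{II}_1$.

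\emph{Finiteness.} By Lemma~\ref{cyclic_lemma} the vector $\xi_0$ is cyclic for the subalgebra $\mathcal{N}'\subseteq\mathcal{F}\left(\mathfrak{S}_{\widehat{\mathbf{n}}}\right)'$, hence cyclic for $\mathcal{F}\left(\mathfrak{S}_{\widehat{\mathbf{n}}}\right)'$ and therefore separating for $M$; by Corollary~\ref{Coll} it is also cyclic for $M$. Thus $\xi_0$ is cyclic and separating, so the central normal vector state $\tr$ of \eqref{trace} is faithful. Consequently $M$ is a finite von Neumann algebra, and once we know it is a factor it must be of type $\mathrm{I}_n$ or $\mathrm{II}_1$.

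\emph{Factoriality.} Let $z\in M\cap M'$; I must show $z\in\mathbb{C}I$. Since $z\in M'$, it commutes with every $\mathfrak{M}(f)=M_f\otimes I$ ($f\in L^\infty\left(\mathbb{X}_{\widehat{\mathbf{n}}},\nu_{\widehat{\mathbf{n}}}\right)$, see Corollary~\ref{Coll}), so $z$ is decomposable over $\mathbb{X}_{\widehat{\mathbf{n}}}$: $z=\int^{\oplus}z_x\,d\nu_{\widehat{\mathbf{n}}}(x)$ with $z_x\in\mathcal{B}\left(l^2(\mathbb{Z})\right)$. Since $z\in M$, it commutes with $\mathcal{F}'(O^k)=I\otimes S^k$ from \eqref{right_component} ($S$ being the shift on $l^2(\mathbb{Z})$), whence each $z_x$ commutes with $S$ and is a Laurent operator (its matrix depends only on $m-n$). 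Again by $z\in M$ it commutes with every $\mathfrak{M}'(f)$, which over the fibre $x$ is the diagonal operator with entries $f(O^mx)$, $m\in\mathbb{Z}$. By Lemma~\ref{free_action}~(d) the orbit $\{O^mx\}_{m\in\mathbb{Z}}$ consists of pairwise distinct points, and the countable family of cylinder indicators $\mathfrak{I}_{\,^j\!\!\mathbb{A}_y}$ separates them; hence as $f$ varies these diagonal operators generate all bounded diagonal operators on $l^2(\mathbb{Z})$, forcing each $z_x$ to be diagonal. A diagonal Laurent operator is scalar, so $z=\mathfrak{M}(c)$ for some $c\in L^\infty\left(\mathbb{X}_{\widehat{\mathbf{n}}},\nu_{\widehat{\mathbf{n}}}\right)$. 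Finally $\mathcal{F}(O)\in M$ (because $O\in\overline{\mathfrak{S}}_{\widehat{\mathbf{n}}}$), and $z\in M'$ yields $\mathcal{F}(O)\mathfrak{M}(c)\mathcal{F}(O)^{-1}=\mathfrak{M}(c)$; by \eqref{covariance} this says $c\circ O^{-1}=c$, so $c$ is $O$-invariant and, by the ergodicity of $O$ (Lemma~\ref{free_action}~(c)), constant. Hence $z\in\mathbb{C}I$ and $M$ is a factor.

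\emph{Type $\mathrm{II}_1$.} The measure $\nu_{\widehat{\mathbf{n}}}=\prod_k\nu_{n_k}$ (all $n_k>1$) is non-atomic, so for every $t\in[0,1]$ there is a set $A$ with $\nu_{\widehat{\mathbf{n}}}(A)=t$. By Corollary~\ref{Coll} the projection $\mathfrak{M}\left(\mathfrak{I}_A\right)\in M$, and $\tr\left(\mathfrak{M}\left(\mathfrak{I}_A\right)\right)=\left(\mathfrak{I}_A\otimes\delta_0,\gimel\otimes\delta_0\right)=\nu_{\widehat{\mathbf{n}}}(A)=t$. Thus the trace attains every value in $[0,1]$ on projections. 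In a type $\mathrm{I}_n$ factor the trace of a projection lies in the finite set $\left\{0,\tfrac1n,\ldots,1\right\}$, so $M$ is not of type $\mathrm{I}$; being a finite factor, it is of type $\mathrm{II}_1$.

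\emph{Main difficulty.} The delicate point is the factoriality step, namely pinning the fibre operators $z_x$ down to scalars: this uses the freeness of the odometer $O$ and the standard but careful direct-integral bookkeeping (a fixed countable separating family of cylinder indicators is needed so that the almost-everywhere commutation relations hold simultaneously for all of them). Equivalently, one recognises $M$ as the group-measure-space algebra $L^\infty\left(\mathbb{X}_{\widehat{\mathbf{n}}},\nu_{\widehat{\mathbf{n}}}\right)\rtimes_O\mathbb{Z}$ of the free ergodic measure-preserving $\mathbb{Z}$-action generated by $O$, for which factoriality and the $\mathrm{II}_1$ type are classical; the argument above is the self-contained version of that fact.
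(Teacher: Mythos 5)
Your proof is correct, but it reaches factoriality by a genuinely different route than the paper. The paper works entirely through the action of a central element $A$ on the cyclic vector: it expands $A\xi_0=\sum_{i\in\mathbb{Z}}f_i\otimes\delta_i$, proves via the estimate $\bigl\|\sum_i|f_i|^2\bigr\|_{L^\infty}\le\|A\|^2$ that each Fourier coefficient $f_i$ lies in $L^\infty\left(\mathbb{X}_{\widehat{\mathbf{n}}},\nu_{\widehat{\mathbf{n}}}\right)$, and hence writes $A=\sum_i\mathfrak{M}(f_i)\,\mathcal{F}\left(O^{-i}\right)$; commutation with $\mathcal{F}(O)$ then forces each $f_i$ to be $O$-invariant, so constant by ergodicity, and commutation with all $\mathfrak{M}(f)$ kills the coefficients with $i\neq0$ by freeness. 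You instead disintegrate the central element over the base: commutation with $\mathfrak{M}\left(L^\infty\right)$ makes $z$ decomposable, commutation with $\mathcal{F}'(O)=I\otimes S$ makes each fibre $z_x$ Laurent, and commutation with the $\mathfrak{M}'(f)$ --- using freeness (Lemma \ref{free_action}~(d)) together with a countable separating family of cylinder indicators --- makes each fibre diagonal, hence scalar; ergodicity is applied once at the end to the resulting function $c$. The two arguments are dual implementations of the standard crossed-product analysis (Fourier modes over $\mathbb{Z}$ versus fibres over $\mathbb{X}_{\widehat{\mathbf{n}}}$) and invoke exactly the same inputs: Lemma \ref{cyclic_lemma}, Corollary \ref{Coll}, the covariance relation \eqref{covariance}, and parts (c), (d) of Lemma \ref{free_action}. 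What your version buys is that it avoids the paper's $L^\infty$-bound on Fourier coefficients; its cost is the direct-integral bookkeeping you rightly flag (the almost-everywhere commutation must hold simultaneously for a countable family, which your cylinder indicators provide, and the approximation of the rank-one diagonal projections by decreasing cylinder projections uses freeness pointwise). A further point in your favour: the paper's proof establishes only triviality of the center, leaving finiteness and the exclusion of type ${\rm I}_n$ implicit, whereas you complete the type classification explicitly --- $\xi_0$ is separating because it is cyclic for $\mathcal{N}'\subset\mathcal{F}\left(\mathfrak{S}_{\widehat{\mathbf{n}}}\right)'$ (Lemma \ref{cyclic_lemma}), so the central state \eqref{trace} is a faithful normal trace, and the projections $\mathfrak{M}\left(\mathfrak{I}_A\right)$ realize a continuum of trace values because $\nu_{\widehat{\mathbf{n}}}$ is non-atomic, which rules out type ${\rm I}_n$.
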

	\begin{proof}
		Suppose that operator $A$ belongs to $\mathcal{F}\left( \mathfrak{S}_{\widehat{\mathbf{n}}}\right)^{\prime\prime}\cap\mathcal{F}\left( \mathfrak{S}_{\widehat{\mathbf{n}}}\right)^{\prime}$.
		Since $\mathcal{H}=L^2\left( \mathbb{X}_{\widehat{\mathbf{n}}},\nu_{\widehat{\mathbf{n}}} \right)\otimes l^2(\mathbb{Z})$, we have
		\begin{eqnarray*}
			A\xi_0=\sum\limits_{i\in\mathbb{Z}}f_i\otimes\delta_i \;\text{ where } f_i\in L^2\left( \mathbb{X}_{\widehat{\mathbf{n}}},\nu_{\widehat{\mathbf{n}}} \right),\;\delta_i(m)=\left\{
			\begin{array}{rl}
				1,& \text{ if } m=i
				\\
				0,& \text{ if } m\neq i.
			\end{array}
			\right.
		\end{eqnarray*}
		Recall that $O\in \overline{\mathfrak{S}}_{\widehat{\mathbf{n}}}$ and representation $\mathcal{F}$ of $\mathfrak{S}_{\widehat{\mathbf{n}}}$ can be extended by continuity with respect to the metric $\rho$ to a representation of $\overline{\mathfrak{S}}_{\widehat{\mathbf{n}}}$. Therefore, the following equality holds
		\begin{eqnarray}\label{action_oper_from_center}
			A\xi_0=\sum\limits_{i\in\mathbb{Z}}\mathcal{F}\left( O^{-i}\right)\left(\,^{^{(O^{i})}}\!\!f_i \otimes \delta_0\right).
		\end{eqnarray}
		Hence, for any $f\in L^\infty\left( \mathbb{X}_{\widehat{\mathbf{n}}},\nu_{\widehat{\mathbf{n}}} \right)$ we have
		\begin{eqnarray}
			A\mathfrak{M}(f)\xi_0\stackrel{\eqref{mult_oper}}=\mathfrak{M}(f)\,A\xi_0=
			\sum\limits_{i\in\mathbb{Z}}ff_i\otimes\delta_i.
		\end{eqnarray}
		We thus get
		\begin{eqnarray*}
			\|A\|^2\int\limits_{\mathbb{X}_{\widehat{\mathbf{n}}}}|f(x)|^2\,{\rm d}\,\nu_{\widehat{\mathbf{n}}}=\|A\|^2\;\|\mathfrak{M}(f)\xi_0\|^2\geq \int\limits_{\mathbb{X}_{\widehat{\mathbf{n}}}}|f(x)|^2\left(\sum\limits_{i\in\mathbb{Z}} |f_i(x)|^2 \right)\,{\rm d}\,\nu_{\widehat{\mathbf{n}}}.
		\end{eqnarray*}
		It follows from this that
		\begin{eqnarray*}
			\left\|  \sum\limits_{i\in\mathbb{Z}} |f_i|^2\right\|_{ L^\infty\left( \mathbb{X}_{\widehat{\mathbf{n}}},\nu_{\widehat{\mathbf{n}}} \right)}\leq \|A\|^2.
		\end{eqnarray*}
		In particular, $f_i\in  L^\infty\left( \mathbb{X}_{\widehat{\mathbf{n}}},\nu_{\widehat{\mathbf{n}}} \right)$ and $\mathfrak{M}(f_i)\in \mathcal{F}\left( \mathfrak{S}_{\widehat{\mathbf{n}}}\right)^{\prime\prime}$ for all $i$ (see Corollary \ref{Coll}).
	Hence, 	using \eqref{action_oper_from_center}, we obtain
		\begin{eqnarray}
			A\xi_0=\sum\limits_{i\in\mathbb{Z}}\mathcal{F}\left( O^{-i}\right)\mathfrak{M}\left(\,^{^{(O^{i})}}\!\!f_i\right)\xi_0\stackrel{\eqref{covariance}}=\sum\limits_{i\in\mathbb{Z}}
			\mathfrak{M}\left( f_i \right)\mathcal{F}\left( O^{-i}\right)\xi_0.
		\end{eqnarray}
		The last equality, Lemma \ref{cyclic_lemma} and Corollary \ref{Coll} imply that
		\begin{eqnarray}
			A=\sum\limits_{i\in\mathbb{Z}}
			\mathfrak{M}\left( f_i \right)\mathcal{F}\left( O^{-i}\right).
		\end{eqnarray}
		Therefore, the equality $\mathcal{F}(O)A\xi_0=A\mathcal{F}(O)\xi_0$ is equivalent to relations $\,^{^O}\!\!f_i=f_i$, where $i\in\mathbb{Z}$. It follows from Lemma \ref{free_action} that function $f_i$ should be constant almost everywhere. In other words, there are constants $c_i$, $i\in\mathbb{Z}$ such that $f_i\equiv c_i$ almost everywhere and $A\xi_0=\sum\limits_{i\in\mathbb{Z}}c_i\;\mathcal{F}(O^{-i})\xi_0$.
		
		Finally, note that equality $\mathfrak{M}(f)\; A\xi_0=A\;\mathfrak{M}(f)\xi_0$, $f\in L^\infty\left( \mathbb{X}_{\widehat{\mathbf{n}}},\nu_{\widehat{\mathbf{n}}} \right)$ is equivalent to
		\begin{eqnarray*}
			c_i\cdot\;f=c_i \cdot\,^{^{(O^{-i})}}\!\!f,\;\;i\in\mathbb{Z}.
		\end{eqnarray*}
		Since $f$  is arbitrary, Lemma \ref{free_action} implies that $c_i=0$ for all $i\neq 0$. Therefore, $A\xi_0=c_0\xi_0$.  By lemma \ref{cyclic_lemma},  $A$ is a scalar operator.
	\end{proof}
	
	\subsection{Construction of an irreducible representation of the group $\overline{\mathfrak{S}}_{\widehat{\mathbf{n}}}\times \overline{\mathfrak{S}}_{\widehat{\mathbf{n}}}$}
	
	For the ${\rm II}_1$ factor representation $\mathcal{F}$ there is a corresponding irreducible representation $\mathcal{F}^{(2)}$ of the group $\overline{\mathfrak{S}}_{\widehat{\mathbf{n}}}\times \overline{\mathfrak{S}}_{\widehat{\mathbf{n}}}$ acting in the Hilbert space $\mathcal{H}=L^2\left( \mathbb{X}_{\widehat{\mathbf{n}}},\nu_{\widehat{\mathbf{n}}} \right)\otimes l^2(\mathbb{Z})$ such that
	\begin{eqnarray}\label{square}
		\begin{aligned}
			&\mathcal{F}^{(2)}\left(g,\id\right)=\mathcal{F}(g) \text{ where } \id \text{ is the identity of } \overline{\mathfrak{S}}_{\widehat{\mathbf{n}}},\; g\in\overline{\mathfrak{S}}_{\widehat{\mathbf{n}}};
			\\
			&\mathcal{F}^{(2)}(g,g)\xi_0=\xi_0 \text{ for all } g\in\overline{\mathfrak{S}}_{\widehat{\mathbf{n}}} \text{ and } \mathcal{F}^{(2)}\left(\id,\overline{\mathfrak{S}}_{\widehat{\mathbf{n}}}\right)\subset \mathcal{F}\left( \overline{\mathfrak{S}}_{\widehat{\mathbf{n}}}\right)^{\prime};
			\\
			&\mathcal{F}^{(2)}\left(\id,O^k\right)=\mathcal{F}'(O^k) \text{ (see \eqref{right_component})}.
		\end{aligned}
	\end{eqnarray}
	In order to define $\mathcal{F}^{(2)}$ let us introduce the antiunitary operator $\mathcal{J}$ acting on $\mathcal{H}$ as follows:
	\begin{eqnarray}\label{anti_unitary}
		\left(\mathcal{J}\eta\right)(x,m)=\overline{\eta\left( O^mx,-m \right)}, \;\eta\in \mathcal{H}=L^2\left( \mathbb{X}_{\widehat{\mathbf{n}}},\nu_{\widehat{\mathbf{n}}} \right)\otimes l^2(\mathbb{Z}).
	\end{eqnarray}
	Then direct calculations show that
	\begin{eqnarray*}
		\mathcal{J}\,\mathcal{F}(g)\xi_0=\mathcal{F}\left(g^{-1}\right)\xi_0=\left(\mathcal{F}(g)\right)^*\xi_0,\; g\in \mathfrak{S}_{\widehat{\mathbf{n}}};\\
		\left(\mathcal{J}\eta,\mathcal{J}\zeta\right)=\left(\zeta,\eta\right)\text{ for all } \zeta,\eta\in\mathcal{H}.
	\end{eqnarray*}
	Combining \eqref{fundamental_repr}, \eqref{right_component}, \eqref{mult_oper} and \eqref{anti_unitary} we obtain
	\begin{eqnarray}
		\mathcal{J}\mathfrak{M}(f)\mathcal{J}=\mathfrak{M}'(\overline{f}),\; \mathcal{J}\mathcal{F}(O^k)\mathcal{J}=\mathcal{F}'(O^k).
	\end{eqnarray}
	These equalities, Lemma \ref{cyclic_lemma} and Corollary \ref{Coll} imply that
	\begin{eqnarray}\label{include}
		\mathcal{N}'=\mathcal{J}\mathcal{F}\left( \mathfrak{S}_{\widehat{\mathbf{n}}}\right)^{''}\mathcal{J}\subset\mathcal{F}\left( \mathfrak{S}_{\widehat{\mathbf{n}}}\right)^\prime.
	\end{eqnarray}
	
	Therefore, the operators $\left\{\mathcal{F}^{(2)}(g,h)=
	\mathcal{F}(g)\mathcal{J}\mathcal{F}(h)\mathcal{J}\right\}_{g,h\in\mathfrak{S}_{\widehat{\mathbf{n}}}}$ define a representation of the group $\mathfrak{S}_{\widehat{\mathbf{n}}}\times\mathfrak{S}_{\widehat{\mathbf{n}}}$. It is easy to check  that $\mathcal{F}^{(2)}$ satisfies conditions \eqref{square}.
	\begin{Prop}
		The representation $\mathcal{F}^{(2)}$ is irreducible.
	\end{Prop}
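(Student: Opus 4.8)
The plan is to prove irreducibility by computing the commutant of the von Neumann algebra generated by $\mathcal{F}^{(2)}(\overline{\mathfrak{S}}_{\widehat{\mathbf{n}}}\times\overline{\mathfrak{S}}_{\widehat{\mathbf{n}}})$ and showing it is trivial. Write $M=\mathcal{F}\left(\mathfrak{S}_{\widehat{\mathbf{n}}}\right)^{\prime\prime}$. Since $\mathcal{F}^{(2)}(g,h)=\mathcal{F}(g)\,\mathcal{J}\mathcal{F}(h)\mathcal{J}$, the operators $\mathcal{F}^{(2)}(g,\id)=\mathcal{F}(g)$ generate $M$, while the operators $\mathcal{F}^{(2)}(\id,h)=\mathcal{J}\mathcal{F}(h)\mathcal{J}$ generate $\mathcal{J}M\mathcal{J}$. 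Here one uses that $\mathcal{J}$ is an antiunitary involution (the relation $\mathcal{J}^2=I$ is read off directly from \eqref{anti_unitary}), so that $A\mapsto \mathcal{J}A\mathcal{J}$ is a $*$-preserving, multiplicative, antilinear bijection carrying the von Neumann algebra $M$ onto the von Neumann algebra $\mathcal{J}M\mathcal{J}$. Consequently $\mathcal{F}^{(2)}$ generates $M\vee\mathcal{J}M\mathcal{J}$, and the whole problem reduces to determining this algebra's commutant.

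The decisive step is to upgrade the inclusion \eqref{include}, namely $\mathcal{J}M\mathcal{J}\subset M'$, to the \emph{equality} $\mathcal{J}M\mathcal{J}=M'$. For this I would identify $(\mathcal{F},\mathcal{H},\xi_0)$ with the GNS triple of the character $\chi_{\nat}$: by Corollary \ref{Coll} the vector $\xi_0$ is cyclic for $M$, and $\chi_{\nat}(\sigma)=\left(\mathcal{F}(\sigma)\xi_0,\xi_0\right)$ by \eqref{natural_character}, so under $\mathcal{H}\cong L^2(M,\tr)$ we may take $\xi_0$ to be the identity of $M$. The formula \eqref{anti_unitary} gives $\mathcal{J}\xi_0=\xi_0$ and $\mathcal{J}\mathcal{F}(g)\xi_0=\mathcal{F}(g)^*\xi_0$; since the linear span of $\{\mathcal{F}(g)\xi_0\}$ is dense in $\mathcal{H}$ (Kaplansky density) and $\mathcal{J}$ is an isometry, this forces $\mathcal{J}$ to coincide with the canonical conjugation $m\xi_0\mapsto m^*\xi_0$ of the abstract construction. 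Therefore the equality \eqref{M=JMJ'}, which asserts $M=JM'J$, applies verbatim with $J=\mathcal{J}$; combined with $\mathcal{J}^2=I$ it yields $M'=\mathcal{J}M\mathcal{J}$, so \eqref{include} is in fact an equality.

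Granting this equality, the conclusion is immediate. Since $\mathcal{J}M\mathcal{J}=M'$ we have $\left(\mathcal{J}M\mathcal{J}\right)'=M''=M$, and hence
\[
\left(\mathcal{F}^{(2)}\left(\overline{\mathfrak{S}}_{\widehat{\mathbf{n}}}\times\overline{\mathfrak{S}}_{\widehat{\mathbf{n}}}\right)\right)'
=\left(M\vee\mathcal{J}M\mathcal{J}\right)'
=M'\cap\left(\mathcal{J}M\mathcal{J}\right)'
=M'\cap M=\mathcal{C}(M).
\]
By Proposition \ref{factor} the algebra $M$ is a ${\rm II}_1$-factor, so $\mathcal{C}(M)=\mathbb{C}I$, and the triviality of the commutant means precisely that $\mathcal{F}^{(2)}$ is irreducible. (Alternatively, once the identification of the previous paragraph is in place, one may simply invoke Proposition \ref{center=commutant} applied to $\chi=\chi_{\nat}$, which directly gives $\left(\mathcal{F}^{(2)}\right)'=\mathcal{C}(M)$, together with factoriality.)

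I expect the genuine content to lie entirely in the second paragraph: verifying that the concretely defined conjugation $\mathcal{J}$ of \eqref{anti_unitary} is the canonical modular conjugation of the tracial GNS representation, which is what turns \eqref{include} into an equality. The surrounding material, that $\mathcal{F}^{(2)}$ generates $M\vee\mathcal{J}M\mathcal{J}$ and that the commutant of $M\vee M'$ is the center, is the standard "$M\vee M'$" computation and should present no obstacle once factoriality from Proposition \ref{factor} is available.
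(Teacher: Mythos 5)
Your proposal is correct, and its skeleton coincides with the paper's: both reduce irreducibility to the equality $\mathcal{J}\,\mathcal{F}\left(\mathfrak{S}_{\widehat{\mathbf{n}}}\right)^{\prime\prime}\mathcal{J}=\mathcal{F}\left(\mathfrak{S}_{\widehat{\mathbf{n}}}\right)^{\prime}$ and then conclude from factoriality (Proposition \ref{factor}) that the commutant of $\mathcal{F}^{(2)}$ is $M\cap M'=\mathbb{C}I$. Where you genuinely differ is in how that equality is obtained. The paper proves it from scratch inside the concrete model: it takes $A'\in M'$ and $B\in\mathcal{J}M'\mathcal{J}$, approximates $B\xi_0$ by $B_n\xi_0$ with $B_n\in M$, uses centrality of the trace (so that $\left\|B_l^*\xi_0-B_m^*\xi_0\right\|=\left\|B_l\xi_0-B_m\xi_0\right\|$) to build the auxiliary operator $B^\sharp$ on $M'\xi_0$, and deduces $A'B=BA'$ --- in effect re-running, in this setting, the same approximation argument that established \eqref{M=JMJ'} in the introductory section. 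You instead observe that $(\mathcal{F},\mathcal{H},\xi_0)$ is the GNS triple of $\chi_{\nat}$ (cyclicity from Corollary \ref{Coll}, the character identity \eqref{natural_character}), verify $\mathcal{J}^2=I$ and $\mathcal{J}\xi_0=\xi_0$, and check that $\mathcal{J}$ agrees with the canonical conjugation $m\xi_0\mapsto m^*\xi_0$ on the dense span of $\left\{\mathcal{F}(g)\xi_0\right\}$ --- for the density you do not even need Kaplansky: the closure of that span is invariant under each $\mathcal{F}(g)$, hence under $M$, and contains $\xi_0$, so it contains $\overline{M\xi_0}=\mathcal{H}$ --- whence $\mathcal{J}$ \emph{is} the canonical $J$ and the general equality \eqref{M=JMJ'} applies verbatim. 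This buys economy (no duplication of the approximation argument) and makes explicit an identification the paper leaves implicit; it is also exactly what legitimizes your alternative one-line route through Proposition \ref{center=commutant} with $\chi=\chi_{\nat}$. What the paper's version buys in exchange is self-containedness in the concrete model: it does not lean on the Section 1 lemma, part of whose verification was delegated to the reader. Both arguments are sound, and your reduction step correctly notes that conjugation by the involutive antiunitary $\mathcal{J}$ is linear and multiplicative, so that $\mathcal{F}^{(2)}$ generates $M\vee\mathcal{J}M\mathcal{J}$, whose commutant is $M'\cap M=\mathcal{C}(M)$ once $\mathcal{J}M\mathcal{J}=M'$ is in hand.
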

	\begin{proof}
		In view of Proposition \ref{factor} it suffices to show that
		\begin{eqnarray}\label{equality}
			\mathcal{J}\mathcal{F}\left( \mathfrak{S}_{\widehat{\mathbf{n}}}\right)^{''}\mathcal{J}=\mathcal{F}\left( \mathfrak{S}_{\widehat{\mathbf{n}}}\right)^\prime.
		\end{eqnarray}
		Denote for convenience the factor $\mathcal{F}\left( \mathfrak{S}_{\widehat{\mathbf{n}}}\right)^{''}$ as $M$. Consider arbitrary bounded operators $A'$ and $B$ such that $A'\in M'$ and $B\in \mathcal{J}M'\mathcal{J}$.
		In order to prove \eqref{equality} it is enough to check that
		\begin{eqnarray}\label{A'_B_permutation}
			A'B=BA'.
		\end{eqnarray}
		According to Corollary \ref{Coll} vector $\xi_0$ is a cyclic vector for $M$. Hence, there is a sequence $\left\{B_n \right\}_{n=1}^\infty\subset M$ such that
		\begin{eqnarray}\label{B_approximation}
			\left\|B\xi_0-B_n\xi_0  \right\|_\mathcal{H}=0.
		\end{eqnarray}
		In particular,
		\begin{eqnarray}\label{sequence_fundamental}
			\lim\limits_{l,m\to\infty}\left\|B_l\xi_0-B_m\xi_0  \right\|_\mathcal{H}=0.
		\end{eqnarray}
		Since $\left( UV\xi_0,\xi_0 \right)=\left( VU\xi_0,\xi_0 \right)$ for all $U,V\in M$ we have
		\begin{eqnarray}\label{norm_central_equality}
			\left\|B_l^*\xi_0-B_m^*\xi_0  \right\|_\mathcal{H}=\left\|B_l\xi_0-B_m\xi_0  \right\|_\mathcal{H}.
		\end{eqnarray}
		Relations \eqref{sequence_fundamental}, \eqref{norm_central_equality} and equalities $V'B_n=B_nV'$ imply that the sequence $B_n^*V'\xi_0$ is a Cauchy sequence for any $V'\in M'$. Namely,
		\begin{eqnarray*}
			\lim_{l,m\to\infty}\left\|B_l^*V'\xi_0-B_m^*V'\xi_0  \right\|_\mathcal{H}=\lim_{l,m\to\infty}\|V'(B_l^*\xi_0-B_m^*\xi_0)\|=0.
		\end{eqnarray*}
		Therefore, we can define the linear operators $\widehat{B}$ and  $B^\sharp$ as follows
		\begin{eqnarray*}
			\widehat{B}V'\xi_0=\lim_{n\to\infty}B_nV'\xi_0,\;B^\sharp V'\xi_0=\lim_{n\to\infty}B_n^*V'\xi_0,\;\;V'\in M'.
		\end{eqnarray*}
Denote by $\mathcal{D}\left(\widehat{B}\right)$ and $\mathcal{D}\left(B^\sharp\right)$ the domains of the operators $\mathcal{D}\left(\widehat{B} \right)$ and $\mathcal{D}\left( B^\sharp \right)$. It is clear that $M'\xi_0\subset\mathcal{D}\left(\widehat{B} \right)$
and 	$M'\xi_0\subset\mathcal{D}\left( B^\sharp \right)$.	For any $W'\in \mathcal{J}M\mathcal{J}\subset M'$ and $V'\in M'$, we have $W'B=BW'$. Now, it follows from  \eqref{B_approximation} that
		\begin{eqnarray*}
			\begin{aligned}
				\left( BW'\xi_0,V'\xi_0\right)
				&=\left( W'B\xi_0,V'\xi_0\right)
				\stackrel{\eqref{B_approximation}}{=}\lim_{n\to\infty}\left( W'B_n\xi_0,V'\xi_0\right)=
				\\
				&=\lim_{n\to\infty}\left( B_nW'\xi_0,V'\xi_0\right)=\lim_{n\to\infty}\left(W'\xi_0,B_n^*V'\xi_0\right)=
				\\
				&=\left( W'\xi_0,B^\sharp V'\xi_0\right).
			\end{aligned}
		\end{eqnarray*}
		Thus, $\left(W'\xi_0,B^*V'\xi_0  \right)=\left( W'\xi_0,B^\sharp V'\xi_0\right)$ for all $W'\in\mathcal{J}M\mathcal{J}=\mathcal{N}'$ and $V'\in M'$ (see Lemma \ref{cyclic_lemma}). By Lemma \ref{cyclic_lemma} the set $\left\{W'\xi_0 \right\}_{W'\in\mathcal{J}M\mathcal{J}}$ is norm dense in $\mathcal{H}$. Therefore,
		\begin{eqnarray}\label{equality_conju}
			\left(\eta,B^*V'\xi_0  \right)=\left(\eta,B^\sharp V'\xi_0\right)
		\end{eqnarray}
		for all $\eta\in\mathcal{H}$ and $V'\in M'$.
		Hence, for arbitrary $U',V'\in\mathcal{J}M\mathcal{J}=\mathcal{N}'$ we have
		\begin{eqnarray*}
			\begin{aligned}
				\left( A'BU'\xi_0,V'\xi_0\right)
				&=\left( A'U'B\xi_0,V'\xi_0\right)=\lim_{n\to\infty}\left( A'U'B_n\xi_0,V'\xi_0\right)=
				\\
				&=\lim_{n\to\infty}\left( B_nA'U'\xi_0,V'\xi_0\right)=\lim_{n\to\infty}\left( A'U'\xi_0,B_n^*V'\xi_0\right)
				\\
				&=\left( A'U'\xi_0,B^\sharp V'\xi_0\right)\stackrel{\eqref{equality_conju}}{=}
				=\left(A'U'\xi_0,B^* V'\xi_0\right)=\left(BA'U'\xi_0, V'\xi_0\right).
			\end{aligned}
		\end{eqnarray*}
		This proves the equality \eqref{A'_B_permutation} and concludes the proof.
	\end{proof}
	
	\section{Preliminaries about the representation theory of the symmetric groups}
	
	In this section we remind some notions and facts from the representation theory of the symmetric groups $\mathfrak{S}_{N}$ which are used in the proof of the main theorem.
	
	\subsection{The minimal element of the conjugacy class}\label{minimal_element}
	
	Consider the symmetric group $\mathfrak{S}_{N}$. Denote by  $\mathfrak{C}_{\mathfrak{m}}$ the conjugacy class of the group $\mathfrak{S}_{N}$, which consists of permutations of the cycle type $\mathfrak{m}=(m_1,m_2,\ldots,m_N)$, where $m_i$ is the number of independent cycles of length $i$.
	\begin{Def}
		The \emph{support} of a permutation $s\in\mathfrak{S}_{N}$ is the subset $\supp_{_{N}}\,s=\left\{x\in \mathbb{X}_{N}\mid sx\neq x \right\}$.
	\end{Def}
	\begin{Rem}\label{convention_cycle_type}
		Further, we also use the notation $\mathfrak{m}=(m_1,m_2,\ldots,m_l)$ for a cycle type $\mathfrak{m}$ instead of $\mathfrak{m}=(m_1,m_2,\ldots,m_N)$ if $m_{l+1}=\ldots=m_{N}=0$ (here $l<N$).
	\end{Rem}
	Clearly, for $\sigma\in\mathfrak{C}_{\mathfrak{m}}\subset\mathfrak{S}_{N}$ we have $\sum\limits_{i=1}^N i\cdot m_i=N$ and $\#\supp_{_{N}}\,s \,=\sum\limits_{i=2}^N i\,\cdot m_i$.
	For any distinct elements $n_0,n_1,\ldots,n_{j-1}\in\mathbb{X}_{N}$ denote by $(n_0\;n_1\;\ldots\;n_{j-1})$ the cyclic permutation $c\in\mathfrak{S}_{N}$ such that $c(n_i)=n_{i+1({\rm mod}\,j)}$. Denote by $s_i$ the transposition $(i\;i+1)$. The elements $\{s_1,s_2,\ldots,s_{N-1}\}$ are also known as the \emph{Coxeter generators} of the symmetric group $\mathfrak{S}_{N}$.
	
	\begin{Def}
		Consider a conjugacy class $\mathfrak{C}_{\mathfrak{m}}$ of the symmetric group $\mathfrak{S}_{N}$. Let $\{i_1,i_2,\ldots,i_p\}$, where $1<i_1<i_2<\ldots<i_p$, be the set of those $i\in\{2,3,\ldots,N\}$ for which $m_{i}\geq 1$. The {\it minimal element} of the conjugacy class $\mathfrak{C}_{\mathfrak{m}}$ is the permutation $\sigma_{\mathfrak{m}}$ defined as follows:
		\begin{eqnarray*}\label{minimal_element_def}
			\begin{aligned}
				\sigma\!_{\mathfrak{m}}=
				&\left(1\;2\;\ldots\;i_1 \right)\cdots\left( \left(m_{i_1}-1\right)i_1+1\;\;  \left(m_{i_1}-1\right)i_1+2\;\;\ldots\;\;  m_{i_1}\,i_1\right)\cdots
				\\
				&\left(m_{i_1}\,i_1+1\;\;\; m_{i_1}\,i_1+2\;\;\;\ldots\;\;\;m_{i_1}\,i_1+i_2\right)\cdots
				\\
				&\left(m_{i_1}\,i_1+ \left(m_{i_2}-1\right)i_2+1\;\;\;  m_{i_1}\,i_1+\left(m_{i_2}-1\right)i_2+2\;\;\;\ldots\;\; \;m_{i_1}\,i_1+m_{i_2}\,i_2\right)\cdots
			\end{aligned}
		\end{eqnarray*}
	\end{Def}
	It is clear that $\sum\limits_{q=1}^{p}i_q m_{i_q}=N-m_1$ and $\supp_{_{N}} \,\sigma\!_{\mathfrak{m}}=\left\{1,2,\ldots, N-m_1\right\}\subset\mathbb{X}_{N}$. The crucial property of $\sigma_{\mathfrak{m}}$ is the following decomposition into the product of Coxeter generators:
	\begin{eqnarray}\label{minimal_el_property}
		\sigma\!_{\mathfrak{m}}=s_{j_1}s_{j_2}\cdots s_{j_r},
	\end{eqnarray}
	where $j_1<j_2<\ldots<j_r$ are elements of $\{1,2,\ldots,N\}$ and $r=N-m_1-\sum\limits_{q=1}^p m_{i_q}$. The existence of such decomposition follows from the equality
	\begin{equation*}
		(i\;i+1\;\ldots\;i+j)=s_{i}s_{i+1}\ldots s_{i+j-1}.
	\end{equation*}
	
	For any permutation $\sigma\in\mathfrak{S}_{N}$ denote by $\kappa_{_{N}}(\sigma)$ the minimal number of factors in the decomposition of $\sigma$ into the product of transpositions. Define the \emph{sign (or signature)} of permutation $\sigma$ as $\sgn_{N}(\sigma)=(-1)^{\kappa_{_{N}}(\sigma)}$.
	
	\subsection{Young tableux and diagrams}
	Let $\lambda=(\lambda_1,\lambda_2,\ldots,\lambda_j)$, where $\lambda_1\ge\ldots\ge\lambda_j\ge1$, be a partition of a positive integer $N=|\lambda|$ into positive integer summands  $\lambda_1,\lambda_2,\ldots,\lambda_j$ ($\lambda\vdash N$); i.e. $\sum\limits_{i=1}^j\lambda_i=N$. Denote by $\lambda$ the corresponding Young diagram consisting of $j$ rows of the length $\lambda_1,\lambda_2,\ldots,\lambda_j$ (the $i$-th row consists of $\lambda_i$ boxes).
	The conjugate or transposed Young diagram $\lambda'$ is obtained from $\lambda$ by replacement of its rows by its columns.
	
	A standard Young tableau $T$ of the shape $\lambda$ is the diagram $\lambda$, whose boxes are filled with positive integers from $1$ to $N$ such that each number occurs exactly once and $n_{pq}>\min\,\left\{n_{p+1\,q},n_{p\,q+1} \right\}$ for all $p,q$ (see Figure \ref{figure}). In this paper we consider only standard Young tableaux.
	
	\begin{figure}[!h]
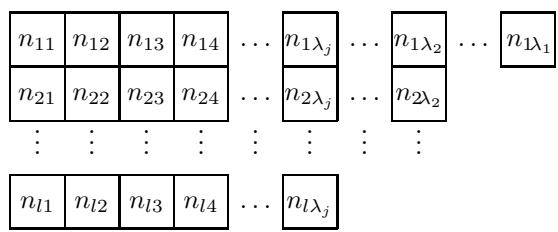

		\centering
		\ytableausetup{boxsize=2em}
		\begin{ytableau}
			n_{11}& n_{12} & n_{13} & n_{14}& \none[\dots]&n_{1\lambda_j}&\none[\dots]&n_{1\lambda_2} &\none[\dots] & n_{1\!\lambda_1}\!\\
			n_{21}    & n_{22}   & n_{23}   & n_{24}& \none[\dots] &n_{2\lambda_j}&\none[\dots]&n_{2\!\lambda_2}  & \none \\
			\none[\vdots]&\none[\vdots]&\none[\vdots] & \none[\vdots] & \none[\vdots]&\none[\vdots] &\none[\vdots]&\none[\vdots] \\
			n_{l 1}  &n_{l 2} &n_{l3}&n_{l 4} &\none[\dots]          &    n_{l \lambda_j}    & \none & \none         & \none
		\end{ytableau}
		\caption{A standard Young tableau of the shape $\lambda$.}\label{figure}
	\end{figure}
	
	Denote the set of all standard Young tableaux of the shape $\lambda$ by $\Tab(\lambda)$. The number $c(p,q)=q-p$ is called the content  of the box with coordinates $(p,q)$ of the diagram $\lambda$ (see \cite{Macdonald}) or the content of the element $n_{pq}$ of tableau $T\in\Tab(\lambda)$ (see \cite{CST}).  We set $a_{i}=c(p,q)$ for $n_{pq}=i$. For each tableau $T$ define two functions $\mathfrak{R}_T$ and $\mathfrak{C}_T$ (row and column numbers) on the set $\{1,2,\ldots,N=|\lambda|\}$ as follows
	\begin{eqnarray}
		\mathfrak{R}_T(n_{pq})=p,\;\;\mathfrak{C}_T(n_{pq})=q.
	\end{eqnarray}

	\subsection{Realization of the irreducible representations of $\mathfrak{S}_{N}$.}\label{realisations}
	In this subsection we remind the explicit constructions of the irreducible representations of the finite group $\mathfrak{S}_{N}$ \cite{CST}, \cite{Okounkov-Vershik}. It is known that irreducible representations of $\mathfrak{S}_{N}$ are parameterized by the Young diagrams $\lambda$ with $N$ boxes. Let $\mathcal{R}_\lambda$ be one of those unitary irreducible representations which acts in the vector space $V_{\lambda}$. There exists an orthonormal basis $\left\{ v_T\right\}_{T\in\Tab(\lambda)}$ in $V_{\lambda}$ such that  the Coxeter generators $s_i=(i\;i+1)$ acts on elements of this basis  as follows:
	\begin{itemize}
		\item
		if $\mathfrak{R}_T(i)=\mathfrak{R}_T(i+1)$, then $\mathcal{R}_\lambda(s_i)v_T=v_T$;
		
		\item
		if $\mathfrak{C}_T(i)=\mathfrak{C}_T(i+1)$, then $\mathcal{R}_\lambda(s_i)v_T=-v_T$;
		
		\item
		if $\mathfrak{R}_T(i)\neq\mathfrak{R}_T(i+1)$ and $\mathfrak{C}_T(i)\neq\mathfrak{C}_T(i+1)$, then after the permutation of only two elements $i$ and $i+1$ in $T$ we obtain a tableau $T'$  again.
		Then the matrix of the operator $\mathcal{R}_\lambda(s_i)$ in basis $\{v_{T},v_{T'}\}$  in the two-dimensional space ${\rm span}\{v_{T},v_{T'}\}$  is
		\begin{equation}\label{matrix_generator}
			\begin{pmatrix}
				1/d & \sqrt{1-1/d^2}
				\\
				\sqrt{1-1/d^2} & -1/d
			\end{pmatrix},
		\end{equation}
		where $d=a_{i+1}-a_{i}$ and $a_i$ is the content of the box of $T$ that contain $i$.
	\end{itemize}
	Hence, we have the following statement.
	\begin{Lm}\label{sign_lemma}
		Let $\sigma\in\mathfrak{S}_N$. Put $\widehat{\mathcal{R}}_\lambda(\sigma)=\sgn(\sigma)\mathcal{R}_\lambda(\sigma)$. Then the representations $\widehat{\mathcal{R}}_\lambda$ and $\mathcal{R}_{\lambda'}$ of the group $\mathfrak{S_N}$  are unitary  equivalent.
	\end{Lm}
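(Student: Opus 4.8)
The plan is to exhibit an explicit unitary intertwiner $U\colon V_\lambda\to V_{\lambda'}$ built from the orthonormal Young bases described in Subsection~\ref{realisations}. The transposition of tableaux $T\mapsto T^{t}$ (reflection across the main diagonal) is a bijection from $\Tab(\lambda)$ onto $\Tab(\lambda')$: it interchanges the roles of rows and columns, so it preserves the defining inequalities of a standard tableau (the condition involving $\min\{\cdot,\cdot\}$ being symmetric), and it sends the shape $\lambda$ to $\lambda'$. First I would record two basic facts. (i) Under transposition the content of every box is negated, since the box $(p,q)$ of $\lambda$ becomes the box $(q,p)$ of $\lambda'$ and $c(q,p)=p-q=-c(p,q)$; consequently the axial distance $d=a_{i+1}-a_i$ attached to a pair $\{T,T'\}$ in \eqref{matrix_generator} becomes $-d$ for the transposed pair $\{T^{t},(T')^{t}\}$. (ii) The map $\widehat{\mathcal R}_\lambda=\sgn\otimes\mathcal R_\lambda$ is genuinely a representation, so it suffices to intertwine on the Coxeter generators $s_i$, for which $\sgn(s_i)=-1$ and hence $\widehat{\mathcal R}_\lambda(s_i)=-\mathcal R_\lambda(s_i)$.

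The crucial ingredient is a global sign $\varepsilon_T\in\{\pm1\}$ attached to each standard tableau. I would fix once and for all a reference enumeration of the boxes of $\lambda$ (say, reading rows left to right, top to bottom), read off from $T$ the resulting word of entries, regard it as a permutation $w_T\in\mathfrak S_N$, and set $\varepsilon_T=\sgn(w_T)$. The key observation is that if $T'$ is obtained from $T$ by interchanging the entries $i$ and $i+1$ (the situation of the third bullet in Subsection~\ref{realisations}), then $w_{T'}=(i\;i+1)\cdot w_T$, whence $\varepsilon_{T'}=-\varepsilon_T$. I then define $U v_T=\varepsilon_T\,v_{T^{t}}$. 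Since $T\mapsto T^{t}$ is a bijection of the index sets and both Young bases are orthonormal, $U$ is automatically unitary.

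It then remains to verify $U\,\widehat{\mathcal R}_\lambda(s_i)=\mathcal R_{\lambda'}(s_i)\,U$ for each $i$, checking the three cases of Subsection~\ref{realisations}. When $i,i+1$ lie in the same row of $T$ they lie in the same column of $T^{t}$, and vice versa, so the eigenvalues $\pm1$ of $\mathcal R_\lambda(s_i)$ and of $\mathcal R_{\lambda'}(s_i)$ are interchanged; the extra factor $\sgn(s_i)=-1$ compensates exactly, while the common factor $\varepsilon_T$ cancels. In the remaining two-dimensional case, fact~(i) gives that the matrix of $\mathcal R_{\lambda'}(s_i)$ on $\{v_{T^{t}},v_{(T')^{t}}\}$ is \eqref{matrix_generator} with $d$ replaced by $-d$, i.e. its diagonal is negated while the off-diagonal entry $\sqrt{1-1/d^2}$ is unchanged; meanwhile $\widehat{\mathcal R}_\lambda(s_i)=-\mathcal R_\lambda(s_i)$ negates the whole block, and the relation $\varepsilon_{T'}=-\varepsilon_T$ flips the sign of the off-diagonal contribution transported by $U$. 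A direct comparison of the four matrix entries shows the two sides agree. Because the $s_i$ generate $\mathfrak S_N$, the intertwining extends to all of $\mathfrak S_N$, proving $\widehat{\mathcal R}_\lambda\cong\mathcal R_{\lambda'}$.

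I expect the one genuinely delicate point to be the bookkeeping of signs in the two-dimensional block: one must check that the negation of the axial distance $d\mapsto-d$, the overall sign $\sgn(s_i)=-1$, and the tableau sign relation $\varepsilon_{T'}=-\varepsilon_T$ combine so that precisely the off-diagonal entries (and not the diagonal ones) are reconciled. Everything else — the transpose bijection, the content negation, and the unitarity of $U$ — is routine once the sign $\varepsilon_T$ is defined globally via $\sgn(w_T)$, which is exactly what guarantees consistency across all generators without any further braid-relation check.
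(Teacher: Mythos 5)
Your proposal is correct and amounts to the argument the paper leaves implicit: the paper deduces Lemma \ref{sign_lemma} directly (``Hence, we have the following statement'') from the Young orthogonal form of Subsection \ref{realisations}, and your unitary $Uv_T=\varepsilon_T\,v_{T^{t}}$ with $\varepsilon_T=\sgn(w_T)$ --- which guarantees $\varepsilon_{s_iT}=-\varepsilon_T$ globally --- combined with the negation of contents (hence of the axial distance $d$) under transposition is precisely the standard way to make that deduction explicit. Your sign bookkeeping in the two-dimensional block is accurate (the transported off-diagonal sign flip from $\varepsilon_{T'}=-\varepsilon_T$ and the overall factor $\sgn(s_i)=-1$ reproduce the matrix \eqref{matrix_generator} with $d$ replaced by $-d$), so nothing is missing.
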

	Define a normalized  character of  the irreducible representation $\mathcal{R}_\lambda$ as follows
	\begin{equation}\label{character_of_finite_Gr} \chi_{_{\scriptstyle\lambda}}(\sigma)=\frac{\Tr\left( \mathcal{R}_\lambda(\sigma) \right)}{\Tr\left( \mathcal{R}_\lambda(\id) \right)}.
	\end{equation}
	where $\Tr$ is the standard trace of a finite-dimensional operator, $\sigma\in\mathfrak{S}_N$ and $\id$ is the identity element of $\mathfrak{S}_N$. Note that Lemma \ref{sign_lemma} implies that
	\begin{equation}\label{sign_char}
		\chi_{_{\scriptstyle\lambda'}}(\sigma)=\sgn(\sigma)\chi_{_{\scriptstyle\lambda}}(\sigma)
	\end{equation}
	for all $\sigma\in\mathfrak{S}_{N}$ and $\lambda\vdash N$.
	
	\begin{Lm}\label{matrix_elem_formula}
		Suppose that permutation $\sigma\in\mathfrak{S}_{N}$ is expressed as a product of distinct Coxeter generators:
		\begin{equation*}
			\sigma=s_{i_1}\ldots s_{i_r}~\text{where}~i_1<i_2<\ldots<i_r.
		\end{equation*}
		Then for any tableau $T$ the following equality holds
		\begin{equation}\label{matrix_elem_equ}
			|(\mathcal{R}_\lambda(\sigma) v_T,v_T)|=\prod_{j}\frac{1}{|a_{i_j+1}-a_{i_j}|},
		\end{equation}
		where the product is taken by all indices $j$ for which numbers $i_j$ and $i_j+1$ are in different rows and different columns of tableau $T$; i.e. $\mathfrak{R}_T(i_j)\neq\mathfrak{R}_T(i_j+1)$ and $\mathfrak{C}_T(i_j)\neq\mathfrak{C}_T(i_j+1)$.
	\end{Lm}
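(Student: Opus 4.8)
The plan is to argue by induction on the number $r$ of factors, peeling off the leftmost generator $s_{i_1}$ (the one with smallest index) at each step. Write $\sigma = s_{i_1}\sigma'$ with $\sigma' = s_{i_2}\cdots s_{i_r}$, so that $i_2 < \cdots < i_r$ remain distinct and increasing and are all strictly larger than $i_1$. Since $s_{i_1}^2 = \id$ and $\mathcal{R}_\lambda$ is unitary, the operator $\mathcal{R}_\lambda(s_{i_1})$ is self-adjoint, hence
\begin{equation*}
(\mathcal{R}_\lambda(\sigma)v_T, v_T) = (\mathcal{R}_\lambda(\sigma')v_T,\, \mathcal{R}_\lambda(s_{i_1})v_T).
\end{equation*}
I would then split according to the three cases for the action of $s_{i_1}$ on $v_T$ recorded in Subsection \ref{realisations}. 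In the two degenerate cases ($i_1,i_1+1$ in the same row or the same column) one has $\mathcal{R}_\lambda(s_{i_1})v_T = \pm v_T$, so that $|(\mathcal{R}_\lambda(\sigma)v_T,v_T)| = |(\mathcal{R}_\lambda(\sigma')v_T,v_T)|$ and the index $j=1$ is correctly omitted from the product in \eqref{matrix_elem_equ}, as it should be since the defining condition there fails.

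The substantial case is the third one, in which $i_1,i_1+1$ lie in different rows and different columns. Here $\mathcal{R}_\lambda(s_{i_1})v_T = \tfrac{1}{d_1}v_T + \sqrt{1 - 1/d_1^2}\,v_{T'}$, with $d_1 = a_{i_1+1} - a_{i_1}$ and $T'$ the tableau obtained from $T$ by interchanging the entries $i_1$ and $i_1+1$. Substituting gives
\begin{equation*}
(\mathcal{R}_\lambda(\sigma)v_T, v_T) = \tfrac{1}{d_1}(\mathcal{R}_\lambda(\sigma')v_T, v_T) + \sqrt{1 - 1/d_1^2}\,(\mathcal{R}_\lambda(\sigma')v_T, v_{T'}).
\end{equation*}
The heart of the matter is to show that the second term vanishes, i.e. $(\mathcal{R}_\lambda(\sigma')v_T, v_{T'}) = 0$. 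This is exactly where the increasing-order hypothesis $i_1 < i_2 < \cdots < i_r$ enters, and I expect it to be the only real obstacle.

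To prove the vanishing I would track the box occupied by the entry $i_1$ throughout the expansion of $\mathcal{R}_\lambda(\sigma')v_T$ in the basis $\{v_S\}$. The rules of Subsection \ref{realisations} show that applying a generator $s_{i_j}$ to a basis vector $v_S$ produces a linear combination of $v_S$ and of the vector $v_{S'}$ attached to the tableau $S'$ obtained from $S$ by interchanging the entries $i_j$ and $i_j+1$ (the latter term being absent in the first two cases). Since every index occurring in $\sigma'$ satisfies $i_j \geq i_2 > i_1$, we have $\{i_j, i_j+1\} \cap \{i_1\} = \varnothing$ for all $j \geq 2$; thus no generator appearing in $\sigma'$ ever moves the entry $i_1$. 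Consequently, in every standard tableau $S$ whose vector $v_S$ occurs with nonzero coefficient in $\mathcal{R}_\lambda(\sigma')v_T$, the entry $i_1$ sits in the same box as in $T$. But in $T'$ the entry $i_1$ occupies the box that held $i_1+1$ in $T$, and — precisely because we are in the third case — this box differs from the box of $i_1$ in $T$. Hence $v_{T'}$ cannot appear in $\mathcal{R}_\lambda(\sigma')v_T$, so $(\mathcal{R}_\lambda(\sigma')v_T, v_{T'}) = 0$.

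Taking absolute values then yields $|(\mathcal{R}_\lambda(\sigma)v_T, v_T)| = \tfrac{1}{|d_1|}\,|(\mathcal{R}_\lambda(\sigma')v_T, v_T)|$, with $\tfrac{1}{|d_1|} = \tfrac{1}{|a_{i_1+1}-a_{i_1}|}$ supplying exactly the factor indexed by $j=1$. Applying the induction hypothesis to $\sigma' = s_{i_2}\cdots s_{i_r}$ for the same tableau $T$ (so with the same contents $a_i$) completes the inductive step, and the base case $r=0$ is the trivial identity $(\mathcal{R}_\lambda(\id)v_T, v_T) = 1$, the empty product. I would emphasize that the argument never needs to control the off-diagonal excursions quantitatively: the ordering hypothesis guarantees that, once a generator swaps a pair, the smaller-index factors remaining to its left can never restore the entry $i_1$ to its original box, so only the diagonal entries $1/d_j$ survive the projection onto $v_T$.
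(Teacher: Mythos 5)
Your proof is correct, and it is the mirror image of the paper's induction rather than a copy of it: the paper peels off the \emph{rightmost} generator $s_{i_r}$ and kills the cross term $\left(\mathcal{R}_\lambda(s_{i_1}\cdots s_{i_{r-1}})v_{T'},v_T\right)$ by arguing that the prefix cannot change the box containing the entry $i_r+1$, whereas you peel off the \emph{leftmost} generator $s_{i_1}$ and kill the cross term $\left(\mathcal{R}_\lambda(\sigma')v_T,v_{T'}\right)$ by observing that every generator in the suffix swaps two entries both strictly larger than $i_1$, so the box of $i_1$ is frozen. The two routes buy different things. Your orthogonality step is the more elementary: the invariance of the box of $i_1$ follows one generator at a time directly from the local rules of Subsection \ref{realisations}, with no reference to subgroup structure. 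The paper's step, as literally phrased (``for any $\sigma$ whose support does not contain $i+1$''), is in fact too weak as a general claim: a permutation whose support omits $i+1$ but moves larger letters can very well move $v_{T'}$ onto tableaux with $i+1$ in a different box --- e.g.\ in $\mathfrak{S}_4$ with $\lambda=(2,2)$ and $i+1=3$, one computes $\left(\mathcal{R}_\lambda\left((2\;4)\right)v_A,v_B\right)=-\sqrt{3}/2\neq 0$ for the two standard tableaux $A,B$, although $\supp_{_4}\,(2\;4)=\{2,4\}\not\ni 3$. What the paper actually uses is the stronger fact, valid for its prefixes, that $\supp_{_N}\left(s_{i_1}\cdots s_{i_{j-1}}\right)\subset\{1,\ldots,i_j\}$, so the prefix lies in $\mathfrak{S}_{i_j}$ and the Gelfand--Tsetlin adaptation of the basis $\{v_T\}$ to the chain $\mathfrak{S}_1\subset\mathfrak{S}_2\subset\cdots$ preserves the box of $i_j+1$; your left-to-right version sidesteps this subtlety entirely. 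Everything else in your write-up checks out: self-adjointness of $\mathcal{R}_\lambda(s_{i_1})$ justifies moving it across the inner product, the signs in \eqref{matrix_generator} are immaterial after taking absolute values, and the induction hypothesis applies verbatim to $\sigma'=s_{i_2}\cdots s_{i_r}$ with the same tableau $T$ and the same contents $a_i$, so the factor $1/|a_{i_1+1}-a_{i_1}|$ appears exactly when the condition $\mathfrak{R}_T(i_1)\neq\mathfrak{R}_T(i_1+1)$, $\mathfrak{C}_T(i_1)\neq\mathfrak{C}_T(i_1+1)$ holds, as required by \eqref{matrix_elem_equ}.
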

	\begin{proof}
		Consider an arbitrary Coxeter generator $s_i$ and a basis vector $v_T \in V_{\lambda}$, where $T\in\Tab(\lambda)$. If numbers $i$ and $i+1$ are in the same row or in the same column of $T$ we have $\mathcal{R}_\lambda(s_i)v_T=\pm v_T$. Thus,  in this case for any permutation $\sigma\in\mathfrak{S}_{N}$ we have
		\begin{equation}
			(\mathcal{R}_\lambda(\sigma s_i) v_T,v_T)=\pm (\mathcal{R}_\lambda(\sigma) v_T,v_T).
		\end{equation}
		Otherwise, according to \eqref{matrix_generator}, we have
		\begin{equation}
			\mathcal{R}_\lambda(s_i)v_T=\pm\frac{1}{a_{i+1}-a_{i}}v_{T}+\sqrt{1-\frac{1}{(a_{i+1}-a_{i})^2}}v_{T'},
		\end{equation}
		where $T'=s_i(T)$ is the tableau obtained from $T$ by permutation of numbers $i$ and $i+1$ (recall that $a_i$ and $a_{i+1}$ are the contents of the boxes of $T$ which contain $i$ and $i+1$ respectively).
		
		Next, we show that for any $\sigma\in\mathfrak{S}_{N}$ whose support $\supp_{_{N}}\sigma$ does not contain $i+1$ we have
		\begin{equation}
			(\mathcal{R}_\lambda(\sigma s_i)v_T,v_T)=\pm\frac{1}{a_{i+1}-a_{i}}(\mathcal{R}_\lambda(\sigma) v_T,v_T).
		\end{equation}
		Indeed, it suffices to check that the vector $\mathcal{R}_\lambda(\sigma) v_{T'}$ is orthogonal to $v_T$. It follows from the fact that $\mathcal{R}_\lambda(\sigma) v_{T'}$ is a linear combination of vectors $v_{T''}$ which correspond to those tableau $T''$ in which number $i+1$ is written in the same box as in $T'$. In particular, for all $T''$ we have $T''\neq T$ and hence, $(v_{T''},v_{T})=0$. Then $(\mathcal{R}_\lambda(\sigma) v_{T'},v_{T})=0$, so the required orthogonality is proved.
		
		Thus, for any permutation $\sigma\in\mathfrak{S}_{N}$ whose support does not contain $i+1$ we have
		\begin{equation*}
			(\mathcal{R}_\lambda(\sigma s_i)v_T,v_T)=\pm k_i(T)\cdot(\mathcal{R}_\lambda(\sigma) v_T,v_T),
		\end{equation*}
		where
		\begin{equation*}
			k_i(T)=
			\begin{cases}
				\frac{1}{a_{i+1}-a_{i}},~&\text{if}~\mathfrak{R}_T(i)\neq\mathfrak{R}_T(i+1)
				~\text{and}~\mathfrak{C}_T(i)\neq\mathfrak{C}_T(i+1),
				\\
				1,~&\text{otherwise}.
			\end{cases}
		\end{equation*}
		
		Now, let us apply this observation to $(\mathcal{R}_\lambda(\sigma )v_{T},v_T)$. Note that the sequence $\{i_j\}$ is strictly increasing, so for all $j$ the support of permutation $s_{i_1}\ldots s_{i_{j-1}}$ does not contain $i_j+1$. Therefore,
		\begin{eqnarray*}
			\begin{aligned}
				(\mathcal{R}_\lambda(\sigma) v_T,v_T)
				&=(\mathcal{R}_\lambda(s_{i_1}\ldots s_{i_r})v_T,v_T)=
				\\
				&=\pm k_{i_r}(T)(\mathcal{R}_\lambda(s_{i_1}\ldots s_{i_{r-1}})v_T,v_T)	=\ldots=\pm\prod_{j=1}^{r}k_{i_j}(T)(v_T,v_T).
			\end{aligned}
		\end{eqnarray*}
		Thus,
		\begin{equation*}
			|(\mathcal{R}_\lambda(\sigma) v_T,v_T)|=\prod_{j=1}^{r}|k_{i_j}(T)|
		\end{equation*}
		and we obtain the formula \eqref{matrix_elem_equ}. Lemma \ref{matrix_elem_formula} is proved.
	\end{proof}
	
	\subsection{Upper bound for the characters of the symmetric group}
	In the proof of the main theorem we use the following important bound for the characters of the symmetric group (see \cite{Roichman} and also \cite{Feray-Sniady}).
	
	\begin{Prop}[{\bf Roichman}, 1996]\label{char_bound}
		There exist absolute constants $a\in(0,1)$ and $b>0$ such that for any Young diagram $\lambda$ with $N=|\lambda|\ge 4$ boxes and for any $\sigma\in\mathfrak{S}_N$ the following inequality holds:
		\begin{equation}
			\left|\chi_{_{\scriptstyle\lambda}}(\sigma)\right|\leq
			\left(\max\left\{\frac{\lambda_1}{N},\frac{\lambda_1'}{N},a\right\}\right)^{b\cdot\#\left(\supp_{_N}(\sigma)\right)}.
		\end{equation}
		Here $\lambda_1$ ($\lambda_1'$) is the number of boxes in the first row (column) of the diagram $\lambda$.
	\end{Prop}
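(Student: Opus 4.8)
The plan is to turn the normalized character into an average of diagonal matrix elements in the Young seminormal realization of Subsection~\ref{realisations} and then estimate that average probabilistically; Lemma~\ref{matrix_elem_formula} is exactly the tool that makes the individual matrix elements explicit. Since $\chi_\lambda$ is a class function, I would first replace $\sigma$ by the minimal element $\sigma_{\mathfrak{m}}$ of its conjugacy class (Subsection~\ref{minimal_element}). By \eqref{minimal_el_property} this element is a product $s_{j_1}\cdots s_{j_r}$ of distinct Coxeter generators with strictly increasing indices, and the cycle-type bookkeeping gives $r=\sum_{i\ge2}(i-1)m_i\ge\frac12\,\#\supp_{_N}(\sigma)$, so the number of factors is comparable to the support size that governs the exponent.

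Using \eqref{character_of_finite_Gr} and the orthonormal basis $\{v_T\}_{T\in\Tab(\lambda)}$, I would write $\chi_\lambda(\sigma_{\mathfrak{m}})=\frac{1}{\dim V_\lambda}\sum_{T\in\Tab(\lambda)}(\mathcal{R}_\lambda(\sigma_{\mathfrak{m}})v_T,v_T)$ and pass to absolute values. Lemma~\ref{matrix_elem_formula} bounds each diagonal entry by $\prod_\ell\frac{1}{|a_{j_\ell+1}-a_{j_\ell}|}$, the product running over those $\ell$ for which $j_\ell$ and $j_\ell+1$ lie in different rows and different columns of $T$; every factor is at most $1$, and equals $1$ precisely when the two entries share a row or a column. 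Because $\dim V_\lambda=\#\Tab(\lambda)$, the operation $\frac{1}{\dim V_\lambda}\sum_T$ is the expectation $\mathbb{E}_T$ over a uniformly random standard tableau of shape $\lambda$, and the whole problem reduces to showing that $\mathbb{E}_T\big[\prod_\ell\frac{1}{|a_{j_\ell+1}-a_{j_\ell}|}\big]$ decays geometrically in $r$, at a rate controlled by $\max\{\lambda_1/N,\lambda_1'/N,a\}$.

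This last estimate is the genuinely hard part, and it is where I expect the real obstacle to be: the factors are strongly correlated, since the positions of consecutive entries in a random tableau are far from independent. I would attempt to control the expectation by growing the tableau along the branching chain $\mathfrak{S}_1\subset\mathfrak{S}_2\subset\cdots\subset\mathfrak{S}_N$, inserting the entries $1,2,\dots,N$ one box at a time and conditioning successively, with the hook-length formula supplying the transition probabilities. The shape of $\lambda$ enters because the probability that two consecutive entries are placed in the same row is controlled by $\lambda_1/N$ and the probability they are placed in the same column by $\lambda_1'/N$; away from the near-row and near-column regimes these events stay bounded away from certainty, so that in expectation a \emph{positive proportion} of the factors are at most a fixed constant strictly below $1$, and accumulating this loss over the $r\ge\frac12\,\#\supp_{_N}(\sigma)$ indices yields the bound with an absolute base $a\in(0,1)$. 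Making this proportion argument rigorous against the dependencies, and handling the extreme regimes where the base must instead be $\lambda_1/N$ or $\lambda_1'/N$, is precisely the analysis of \cite{Roichman} (with an alternative, more algebraic route through polynomial functions on Young diagrams and free cumulants in \cite{Feray-Sniady}), which I would follow.
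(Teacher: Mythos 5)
The first thing to say is that the paper contains no proof of Proposition~\ref{char_bound} at all: it is quoted as an external result of Roichman, with \cite{Roichman} and \cite{Feray-Sniady} cited, and is then used as a black box (in Lemma~\ref{inf_mu}). So there is no internal argument to compare yours against, and for this statement simply citing \cite{Roichman} is the appropriate move. Judged on its own terms, your proposal is a reasonable outline whose reduction steps are sound: replacing $\sigma$ by the minimal element $\sigma_{\mathfrak{m}}$ is legitimate because $\chi_{_{\scriptstyle\lambda}}$ is a class function; the count $r=\sum_{i\ge2}(i-1)m_i\ge\tfrac12\,\#\supp_{_N}(\sigma)$ is correct; and rewriting $\chi_{_{\scriptstyle\lambda}}(\sigma_{\mathfrak{m}})$ as the expectation, over a uniform standard tableau of shape $\lambda$, of the product supplied by Lemma~\ref{matrix_elem_formula} is exactly right. (One small correction: when $i_j$ and $i_j+1$ share a row or column the corresponding factor is absent from the product rather than equal to $1$, and when a factor is present the axial distance is automatically at least $2$, so each genuine factor is at most $1/2$.)

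The genuine gap is the one you flag yourself, and it is fatal as written: the entire content of the theorem is the claim that in expectation a positive proportion of the $r$ indices contribute a factor bounded by $\max\left\{\lambda_1/N,\,\lambda_1'/N,\,a\right\}$, uniformly in $\lambda$, in $N$, and in the positions $j_\ell$, despite the strong correlations between the locations of consecutive entries in a random standard tableau. Your sketch of a sequential-insertion/conditioning scheme with hook-length transition probabilities is a plausible program, but none of it is carried out; the uniformity in the two extreme regimes (where the base must be $\lambda_1/N$ or $\lambda_1'/N$ rather than an absolute constant) is precisely where naive conditioning arguments break. Ending by saying you would ``follow the analysis of \cite{Roichman}'' makes the attempt a citation rather than a proof; note also that Roichman's own argument does not pass through the seminormal matrix elements at all but proceeds by a combinatorial induction essentially based on the Murnaghan--Nakayama rule, and \cite{Feray-Sniady} obtain such bounds from Stanley's character formula, so your tableau-averaging route, were it completed, would in fact be a third approach rather than a reconstruction of either cited proof.
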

	
	\section{The approximation theorem for the characters on $\mathfrak{S}_{\widehat{\mathbf{n}}}$}
	In this section we state and prove the approximation theorem for characters on the group $\mathfrak{S}_{\widehat{\mathbf{n}}}$ which is a crucial part of the proof of Theorem \ref{main_theorem}.
	
	Firstly, let us recall the definition of a character.
	\begin{Def}
		A function $\chi\colon G\to\mathbb{C}$ on a group $G$ is called a \emph{character} if the following conditions hold
		\begin{itemize}
			\item[(a)]
			$\chi$ is \emph{central}, i.e. $\chi(gh)=\chi(hg)$ for all $g,h\in G$;
			
			\item[(b)]
			$\chi$ is a \emph{positive-definite function}, i.e. for any elements $g_1,\ldots,g_k\in G$ the matrix $\left[\chi(g_i g_j^{-1})\right]$ is a Hermitian and positive-semidefinite matrix;
			
			\item[(c)]
			$\chi$ is \emph{normalized}, i.e. $\chi(\id)=1$, where $\id$ is the identity of the group $G$.
		\end{itemize}
		If additionally
		\begin{itemize}
			\item[(d)]
			$\chi$ is \emph{indecomposable}, i.e. $\chi$ cannot be represented as a sum of two linear independent functions that satisfy (a) and (b),
		\end{itemize}
		then $\chi$ is called an \emph{indecomposable character}.
	\end{Def}
	\begin{Rem}
		Let $\pi_\chi$ be the representation of $G$, obtained from $\chi$ via the Gelfand-Naimark-Segal (shortly GNS) construction. Then the property ({\rm d}) is equivalent to $\pi_{\chi}$ being a factor representation.
	\end{Rem}
	
	The following fact is an analogue of the approximation theorem from \cite{Ver_Ker} for the characters of the standard infinite symmetric group $\mathfrak{S}_\infty$.
	\begin{Prop}\label{approx_thm}
6		Each indecomposable character $\chi$ on $\mathfrak{S}_{\widehat{\mathbf{n}}}$ is a weak limit of some sequence of normalized irreducible characters of the groups $\mathfrak{S}_{N_k}$. Namely, there is an increasing sequence $\left\{ k(l)\right\}_{l=1}^\infty$ of positive integers and there exist the partitions $\,^{k(l)}\!\lambda\vdash N_{k(l)}$ such that
		\begin{eqnarray*}
			\lim\limits_{l\to\infty} \chi_{_{^{k(l)}\!{\scriptstyle\lambda}}}(g)=\chi(g) \text{ for all } g\in\mathfrak{S}_{\widehat{\mathbf{n}}}.
		\end{eqnarray*}
	\end{Prop}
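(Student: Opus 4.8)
The plan is to restrict $\chi$ to each finite group $\mathfrak{S}_{N_k}$ and convert the factor property, in the form established above ($\pi_\chi$ is a factor-representation iff $\dim E\mathcal{H}_\chi=1$), into a concentration statement for characters. Since $\chi|_{\mathfrak{S}_{N_k}}$ is a character of the finite group $\mathfrak{S}_{N_k}$, it decomposes as a convex combination of normalized irreducible characters,
\begin{equation*}
\chi|_{\mathfrak{S}_{N_k}}=\sum_{\lambda\vdash N_k}p^k_\lambda\,\chi_{_{\scriptstyle\lambda}},\qquad p^k_\lambda\geq 0,\quad \sum_{\lambda\vdash N_k}p^k_\lambda=1.
\end{equation*}
Writing $M_k=\pi_\chi(\mathfrak{S}_{N_k})^{\prime\prime}\cong\bigoplus_{\lambda\vdash N_k}\mathbb{M}_{d_\lambda}(\mathbb{C})$ and letting $z^k_\lambda$ denote its minimal central projections, one has $p^k_\lambda={\rm tr}(z^k_\lambda)$, so the weights $\{p^k_\lambda\}$ form a probability measure on the diagrams $\lambda\vdash N_k$. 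The goal is to show that this measure concentrates, i.e.\ that along a subsequence there are single diagrams whose characters converge to $\chi$ at every group element.

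The key step is to identify $E_k$ with conjugation-averaging inside $M$. For $g\in\mathfrak{S}_{N_k}$ put $c_k(g)=\sum_{\lambda\vdash N_k}\chi_{_{\scriptstyle\lambda}}(g)\,z^k_\lambda\in\mathcal{C}(M_k)$; by Schur's lemma this is exactly the conjugation average of $\pi_\chi(g)$ (equivalently, the trace-preserving conditional expectation of $\pi_\chi(g)$ onto $\mathcal{C}(M_k)$), so that as vectors in $\mathcal{H}_\chi=L^2(M,{\rm tr})$ one has
\begin{equation*}
E_k\,\pi_\chi(g)\xi_\chi=\frac{1}{N_k!}\sum_{\sigma\in\mathfrak{S}_{N_k}}\pi_\chi(\sigma g\sigma^{-1})\xi_\chi=c_k(g)\xi_\chi.
\end{equation*}
Because $\chi$ is indecomposable, $\pi_\chi$ is a factor-representation, hence $\dim E\mathcal{H}_\chi=1$; since $\xi_\chi\in E\mathcal{H}_\chi$ this forces $E$ to be the rank-one projection onto $\mathbb{C}\xi_\chi$, giving $E\,\pi_\chi(g)\xi_\chi=\langle\pi_\chi(g)\xi_\chi,\xi_\chi\rangle\xi_\chi=\chi(g)\xi_\chi$. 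As $E_k\downarrow E$ strongly, $c_k(g)\xi_\chi\to\chi(g)\xi_\chi$ in norm. Using that $\chi_{_{\scriptstyle\lambda}}$ are real, so that ${\rm tr}(c_k(g))=\chi(g)$ and ${\rm tr}(c_k(g)^2)=\sum_\lambda p^k_\lambda\chi_{_{\scriptstyle\lambda}}(g)^2$, one obtains
\begin{equation*}
\lim_{k\to\infty}\sum_{\lambda\vdash N_k}p^k_\lambda\bigl(\chi_{_{\scriptstyle\lambda}}(g)-\chi(g)\bigr)^2=\lim_{k\to\infty}\bigl\|c_k(g)\xi_\chi-\chi(g)\xi_\chi\bigr\|^2=0 .
\end{equation*}
In other words, under $\{p^k_\lambda\}$ the variance of $\lambda\mapsto\chi_{_{\scriptstyle\lambda}}(g)$ tends to $0$ for every fixed $g$.

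It remains to pass from this coordinatewise concentration to a single approximating sequence, which is a routine diagonal argument. Enumerate the countable group as $\mathfrak{S}_{\widehat{\mathbf{n}}}=\{g_1,g_2,\ldots\}$. For fixed $g_i$ and $\varepsilon>0$, Chebyshev's inequality bounds the measure $p^k\bigl(\{\lambda:|\chi_{_{\scriptstyle\lambda}}(g_i)-\chi(g_i)|\geq\varepsilon\}\bigr)$ by the above variance divided by $\varepsilon^2$, which tends to $0$. Hence, for each $l$ one may choose $k(l)$ strictly increasing and so large that $g_1,\dots,g_l\in\mathfrak{S}_{N_{k(l)}}$ and the union over $i\le l$ of the sets $\{\lambda:|\chi_{_{\scriptstyle\lambda}}(g_i)-\chi(g_i)|\geq 1/l\}$ has $p^{k(l)}$-measure strictly less than $1$. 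Its complement is then nonempty, and any diagram $\,^{k(l)}\!\lambda$ lying in it satisfies $|\chi_{_{^{k(l)}\!{\scriptstyle\lambda}}}(g_i)-\chi(g_i)|<1/l$ for all $i\le l$. For each fixed $g_i$ this yields $\chi_{_{^{k(l)}\!{\scriptstyle\lambda}}}(g_i)\to\chi(g_i)$, which is the desired assertion.

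The main obstacle is the identity of the second paragraph: recognizing that conjugation-averaging over $\mathfrak{S}_{N_k}$ is the conditional expectation onto $\mathcal{C}(M_k)$ and that its values $c_k(g)\xi_\chi$ coincide with $E_k\,\pi_\chi(g)\xi_\chi$, so that the abstract criterion $\dim E\mathcal{H}_\chi=1$ translates \emph{directly} into the vanishing of the character variance. Once this is in place, everything else — the decomposition of $\chi|_{\mathfrak{S}_{N_k}}$, the trace identities for $c_k(g)$, and the diagonal selection — is bookkeeping.
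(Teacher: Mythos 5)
Your proposal is correct and follows essentially the same route as the paper: your identity $\sum_{\lambda\vdash N_k}p^k_\lambda\left(\chi_{_{\scriptstyle\lambda}}(g)-\chi(g)\right)^2=\left\|c_k(g)\xi_\chi-\chi(g)\xi_\chi\right\|^2\to 0$ is exactly the paper's estimate \eqref{individual_average_estimate}, with $p^k_\lambda=\tr(P_\lambda)$ and with your appeal to $\dim E\mathcal{H}_\chi=1$ playing the role of the paper's use of factoriality in the form $\,^\infty\!E(a)=\tr(a)I$ for the decreasing conditional expectations $\,^{^{N_k}}\!\!E$. Your final Chebyshev-plus-union-bound diagonal selection (first $l$ group elements, threshold $1/l$) is a cosmetic variant of the paper's choice of sequences $\epsilon_l,\delta_l$ with $\epsilon_l N_l!/\delta_l\to 0$ controlling all of $\mathfrak{S}_{N_l}$ at once.
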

	\begin{proof}
		Consider the GNS-representation $\left( \pi_\chi,\mathcal{H}_\chi,\xi_\chi \right)$ of the group $\mathfrak{S}_{\widehat{\mathbf{n}}}$ acting in the Hilbert space $\mathcal{H}_\chi$ with the cyclic and separating vector $\xi_\chi$ such that $\left(\pi_\chi(g)\xi_\chi,\xi_\chi \right)=\chi(g)$ for all $g\in \mathfrak{S}_{\widehat{\mathbf{n}}}$. Denote by $\mathcal{B}(\mathcal{H}_\chi)$ the set of all bounded linear operators on $\mathcal{H}_\chi$. For any  conjugation-invariant  subset $\mathcal{S}\subset\mathcal{B}(\mathcal{H}_\chi)$ define its commutant by
		\begin{equation*}
			\mathcal{S}'=\left\{A\in\mathcal{B}(\mathcal{H}_\chi): AB=BA \text{ for all } B\in\mathcal{S}\right\},~\left( \mathcal{S}'\right)'=\mathcal{S}^{''}.
		\end{equation*}
		 We denote by $\left[\mathcal{S}\mathfrak{V}\right]$ the smallest closed subspace containing  $\mathcal{S}\mathfrak{V}$, where $\mathfrak{V}$ is a subset of $\mathcal{H}_\chi$.
		
		Since $\chi$ is a character, then according to the GNS-construction we have
		\begin{eqnarray}\label{bicyclic}
			\left[\pi_\chi(\mathfrak{S}_{\widehat{\mathbf{n}}})\xi_\chi\right]
			=\left[\pi_\chi(\mathfrak{S}_{\widehat{\mathbf{n}}})'\xi_\chi\right]=\mathcal{H}_\chi.
		\end{eqnarray}
		For convenience we denote the $w^*$-algebra $\left( \pi_\chi(\mathfrak{S}_{\widehat{\mathbf{n}}}) \right)^{''}$, which is a factor of type ${\rm II}_1$, as $M$. The faithful normal normalized trace $\tr$ on $M$ is a vector state defined by $\xi_\chi$, i.e. $\tr(a)=\left( a\xi_\chi,\xi_\chi \right)$ for $a\in M$.
		Denote by $L^2\left( M, \tr\right)$ the Hilbert space which is the completion of $M$ with respect to the norm which correspond to the inner product $<a,b>_{\tr}=\tr\,(b^*a)$, where $a,b\in M$.

From now on we suppose that $\mathcal{H}_\chi=L^2\left( M, \tr\right)$. Then we can assume that the operators  $a\in M$ acts by  {\it left multiplication}; i. e. $$L^2\left( M, \tr\right)\ni \eta\stackrel{a}{\rightarrow} a\eta\in L^2\left( M, \tr\right).$$
In this case the operators $a'\in M'$ are being  realized by operators of the {\it right multiplication} by the elements of $a\in M$
$$L^2\left( M, \tr\right)\ni \eta\stackrel{a'}{\rightarrow} \eta\;\cdot\;a\in L^2\left( M, \tr\right).$$
		
		Each element $g\in\mathfrak{S}_{\widehat{\mathbf{n}}}$ defines an inner automorphism ${\rm Ad}\,g$ of the factor $M$:
		\begin{eqnarray*}
			{\rm Ad}\,g (a)=\pi_\chi(g)a\pi_\chi(g^{-1}), \;\;a\in M.
		\end{eqnarray*}
		The map
		\begin{eqnarray}\label{def_cond_exp}
			M\ni a \stackrel{\,^{^{N_k}}\!\!E}{\mapsto} \frac{1}{N_k!}\sum\limits_{g\in\mathfrak{S}_{N_k}}{\rm Ad}\,g (a)\in M
		\end{eqnarray}
		is the {\it conditional expectation} \cite{Takesaki_1} which projects $M$ onto the subalgebra
		\begin{eqnarray*}
			\,^{N_k}\!\!M=\left\{a\in M: {\rm Ad}\,g (a)=a \;\text{ for all } g\in\mathfrak{S}_{N_k}\right\}.
		\end{eqnarray*}
		If $\lambda\vdash N_k$ and $P_\lambda=\frac{\dim^2\lambda}{N_k!}\sum\limits_{g\in \mathfrak{S}_{N_k}} \chi_\lambda(g)\pi_\chi(g)$, where ${\dim}\,\lambda$ is the dimension of the irreducible representation of $\mathfrak{S}_{N_k}$ that corresponds to $\lambda$, then $P_\lambda$ belongs to the center of the finite-dimensional algebra $\pi_\chi\left(\mathfrak{S}_{N_k}\right)^{''}$ and
		\begin{eqnarray}\label{decomposition}
			\,^{^{N_k}}\!\!E\left(\pi_\chi(g)\right)=\sum\limits_{\lambda\vdash N_k}\chi_\lambda(g)\,P_\lambda\;\text{ for all } \; g\in\mathfrak{S}_{N_k}.
		\end{eqnarray}
		The conditional expectations $\,^{^{N_k}}\!\!E$ are orthogonal projections in $L^2(M,\tr)$. Since $\,^{^{N_k}}\!\!E\geq \,^{^{N_{k+1}}}\!\!E$, we have
		\begin{eqnarray}\label{norm_convergence}
			\lim\limits_{k\to\infty}\left\|\,^{^{N_k}}\!\!E(a)-\,^\infty\!E(a) \right\|_{L^2(M,\tr)}=0 \;\text{ for any } a\in M,
		\end{eqnarray}
		where $\,^\infty\!E=\lim\limits_{k\to\infty}\,^{^{N_k}}\!\!E$.
		Since $w^*$-algebra $M=\pi_\chi\left( \mathfrak{S}_{\widehat{\mathbf{n}}}\right)^{''}$ is a factor, formula \eqref{def_cond_exp} implies that
		\begin{eqnarray*}
			\,^\infty\!E(a)=\tr(a)I \;\text{ for any } a\in M.
		\end{eqnarray*}
		Thus, combining \eqref{decomposition} and \eqref{norm_convergence}, we obtain
		\begin{eqnarray}\label{individual_average_estimate}
			\lim\limits_{k\to\infty} \sum\limits_{\lambda\vdash N_k}\left(\chi_\lambda(g)-\chi(g)\right)^2 \tr\left(P_\lambda\right)=0 \;\text{ for any } g\in\mathfrak{S}_{\widehat{\mathbf{n}}}.
		\end{eqnarray}
		Consider two sequences of positive reals $\left\{\epsilon_l \right\}$ и $\left\{\delta_l \right\}$ which satisfy the following conditions:
		\begin{eqnarray}\label{sequences}
			\lim\limits_{l\to\infty}\max\left\{\epsilon_l,\delta_l \right\}=0 \text{ и } \lim\limits_{l\to\infty}\frac{\epsilon_l\,N_l!}{\delta_l}=0.
		\end{eqnarray}
		Now, using \eqref{individual_average_estimate} we find  for each $l$ a positive integer $k(l)$ such that
		\begin{eqnarray}\label{prelimit_estimate}
			\sum\limits_{\lambda\vdash N_{m}}\left(\chi_\lambda(g)-\chi(g)\right)^2 \tr\left(P_\lambda\right)<\epsilon_l \text{ for all } g\in\mathfrak{S}_{N_l} \text{ and } m\geq k(l).
		\end{eqnarray}
		Put $\Lambda(m,g)=\left\{\lambda\vdash N_m: \left(\chi_\lambda(g)-\chi(g)\right)^2>\delta_l \right\}$, where $g\in\mathfrak{S}_{N_l}$, and $\,^k\!\Lambda(m)=\bigcup\limits_{g\in\mathfrak{S}_{N_l}}\Lambda(m,g)$. Applying \eqref{prelimit_estimate}, we obtain
		\begin{eqnarray}
			\sum\limits_{\lambda:(\lambda\vdash N_m)\&(\lambda\notin \,^l\!\Lambda(m))}\tr(P_\lambda)>1-\frac{\epsilon_l\,N_l!}{\delta_l}.
		\end{eqnarray}
		This inequality and \eqref{sequences} imply that for each $l$ there exists a partition $\,^{k(l)}\!\lambda\vdash N_{k(l)}$ such that
		\begin{eqnarray*}
			\left|\chi\!_{_{\,^{k(l)}\!\lambda}}(g)-\chi(g)\right|\leq\sqrt{\delta_l}\;\text{ for all }\;g\in\mathfrak{S}_{N_l}.
		\end{eqnarray*}
		Since $\lim\limits_{l\to\infty}\delta_l=0$ we have $\lim\limits_{l\to\infty}\chi\!_{_{\,^{k(l)}\!{\scriptstyle\lambda}}}(g)=\chi(g)$ for all $g\in\mathfrak{S}_{\widehat{\mathbf{n}}}$. Thus, the sequences $\{k(l)\}_{l=1}^{\infty}$ and $\{\,^{k(l)}\!\lambda\vdash N_{k(l)}\}_{l=1}^{\infty}$ satisfy the required conditions. Proposition \ref{approx_thm} is proved.
	\end{proof}
	
	\section{Technical lemmas}
	
	In this section we establish several technical facts which are used in the proof of Theorem \ref{main_theorem}.
	
	First, let us introduce some notations.
	Let $\,^n\Upsilon$ be the set of all partitions of a positive integer $n$. Denote by $(n)$ the partition which consists of only one part and which corresponds to the Young diagram with only one row. Put $(1^n)=(n)'$, i.e. $(1^n)$ is the partition which consists of $n$ parts equal 1 and which corresponds to the Young diagram with only one column. For an arbitrary partition $\mu$ denote
	$\,^n\Upsilon_\mu=\left\{\lambda\in\,^n\Upsilon: \lambda\setminus\left(\lambda_1\right)=\mu\right\}$ and  $\,^n\widehat{\Upsilon}_\mu=\left\{\lambda\in\,^n\Upsilon: \lambda\setminus\left( 1^{\lambda'_1}\right)=\mu\right\}$, where $\lambda=\left(\lambda_1, \ldots, \lambda_l \right)$. It is clear that
	$\,^n\Upsilon_\mu=\left\{\lambda\in\,^n\Upsilon: \lambda'\setminus\left( 1^{\lambda_1}\right)=\mu'\right\}$
	and
	$\,^n\widehat{\Upsilon}_\mu=\left\{\lambda\in\,^n\Upsilon: \lambda'\setminus\left( \lambda'_1\right)=\mu'\right\}$.
	
	Define in the set of all sequences $\left\{\,^k\!\lambda\vdash N_k\right\}_{k=1}^{\infty}$ of partitions the subsets $C_\infty$, $\widehat{C}_\infty$ $C_\mu$ and $\widehat{C}_\mu$, where $\mu$ is a partition and $|\mu|<\infty$, as follows:
	\begin{itemize}
		\item {($C_\infty$)} there is a subsequence $\left\{k_i \right\}$ such that
		$$
		\lim\limits_{i\to\infty}\left|  \,^{k_i}\!\lambda\setminus\left( \,^{k_i}\!\lambda_1\right)\right|=\infty;
		$$
		
		\item
		{($\widehat{C}_\infty$)} there is a subsequence $\left\{k_i \right\}$ such that
		$$
		\lim\limits_{i\to\infty}\left|  \,^{k_i}\!\lambda\setminus\left( 1^{\,^{k_i}\!\lambda_1'}\right)\right|=\infty;
		$$
		
		\item {($C_\mu$)} there is a subsequence $\left\{k_i \right\}$ such that
		$\,^{k_i}\!\lambda\in \,^{^{\left(\!N_{k_i}\!\right)}}\!\Upsilon_\mu$,
		where $\mu$ is independent of $i$;
		
		\item {($\widehat{C}_\mu$)} there is a subsequence $\left\{k_i \right\}$ such that
		$\,^{k_i}\!\lambda\in \,^{^{\left(\!N_{k_i}\!\right)}}\!\widehat{\Upsilon}_\mu$,
		where $\mu$ is independent of $i$.
	\end{itemize}
	\begin{Rem}\label{transpose_c_sets}
		Clearly, $\widehat{C}_{\mu}$ ($\widehat{C}_{\infty}$) is the set of all sequences which can be obtained by transposing the sequences $\left\{\,^k\!\lambda\vdash N_k\right\}$ in $C_{\mu}$ ($C_\infty$).
	\end{Rem}
	
	The following statement is trivial.
	\begin{Lm}\label{mu_cases}
		The union $C_\infty\cup \widehat{C}_\infty\cup\bigcup\limits_\mu C_\mu\cup \bigcup\limits_\mu\widehat{C}_\mu$ coincides with the set of all sequences of partitions $\left\{ \,^k\!\lambda\vdash N_k\right\}_{k=1}^\infty$.
	\end{Lm}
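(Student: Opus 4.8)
The plan is to show that the covering already follows from the first two families $C_\infty$ and $\bigcup_\mu C_\mu$ alone; the remaining families $\widehat{C}_\infty$ and $\bigcup_\mu\widehat{C}_\mu$ then only enlarge the union and are harmless for the conclusion. Fix an arbitrary sequence of partitions $\left\{\,^k\!\lambda\vdash N_k\right\}_{k=1}^\infty$ and consider the sequence of non-negative integers
\[
a_k=\left|\,^k\!\lambda\setminus\left(\,^k\!\lambda_1\right)\right|=N_k-\,^k\!\lambda_1,
\]
that is, the number of boxes of $\,^k\!\lambda$ lying outside its first row. The whole argument hinges on a single dichotomy for the sequence $\{a_k\}$.

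First I would split into two cases according to whether $\{a_k\}$ is bounded. If $\{a_k\}$ is unbounded, one simply extracts a subsequence $\{k_i\}$ with $a_{k_i}\to\infty$, which is exactly the defining condition of $C_\infty$; hence $\left\{\,^k\!\lambda\right\}\in C_\infty$. If instead $\{a_k\}$ is bounded, say $a_k\le B$ for all $k$, then each partition $\,^k\!\lambda\setminus\left(\,^k\!\lambda_1\right)$ has size at most $B$. Since there are only finitely many partitions of size at most $B$, the pigeonhole principle produces a single partition $\mu$ with $|\mu|\le B$ and a subsequence $\{k_i\}$ along which $\,^{k_i}\!\lambda\setminus\left(\,^{k_i}\!\lambda_1\right)=\mu$; in other words $\,^{k_i}\!\lambda\in\,^{^{\left(N_{k_i}\right)}}\!\Upsilon_\mu$, so $\left\{\,^k\!\lambda\right\}\in C_\mu$.

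Combining the two cases shows that every sequence lies in $C_\infty\cup\bigcup_\mu C_\mu$, and \emph{a fortiori} in the larger union $C_\infty\cup\widehat{C}_\infty\cup\bigcup_\mu C_\mu\cup\bigcup_\mu\widehat{C}_\mu$. The reverse inclusion is immediate, since each of the four families consists of sequences of partitions of the prescribed form, so the two sets coincide. I do not expect any genuine obstacle: the only point needing a moment's care is the pigeonhole step, where one must use that a \emph{bounded} sequence of partition \emph{sizes} forces the partitions themselves (not merely their sizes) to repeat, which holds precisely because finitely many boxes admit only finitely many Young diagrams. One could equally run the symmetric argument with $b_k=\left|\,^k\!\lambda\setminus\left(1^{\,^k\!\lambda_1'}\right)\right|$ to land in $\widehat{C}_\infty$ or $\widehat{C}_\mu$, which explains why the conjugate families are listed in the statement even though they are redundant for the covering.
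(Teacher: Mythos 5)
Your proof is correct; the paper gives no argument at all (it dismisses the lemma as trivial), and your bounded/unbounded dichotomy on $a_k=\left|\,^k\!\lambda\setminus\left(\,^k\!\lambda_1\right)\right|$ combined with the pigeonhole principle (finitely many Young diagrams of size at most $B$, hence a constant value $\mu$ along a subsequence) is exactly the intended reasoning. Your closing observation that $\widehat{C}_\infty$ and the $\widehat{C}_\mu$ are redundant for the covering is also accurate --- they are listed because the subsequent results (Lemma \ref{fin_mu} versus Corollary \ref{transpose_fin_mu}) attach different limit characters to $C_\mu$ and $\widehat{C}_\mu$, not because they are needed to exhaust all sequences.
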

	
	Now consider an arbitrary sequence $\left\{ \,^k\!\lambda\vdash N_k\right\}_{k=1}^\infty$ of partitions. Let $\mathcal{R}_{(\!^{k}\!\lambda)}$ be the irreducible representation of $\mathfrak{S}_{N_k}$ which acts in the Hilbert space $V_{(\!^{k}\!\lambda)}$. Let $\left\{v_T \right\}_{T\in\Tab(\!^{k}\!\lambda)}$ be the orthonormal basis of $V_{(\!^{k}\!\lambda)}$, whose vectors are parameterized by the Young tableaux of the shape $\lambda$ (see Subsection \ref{realisations}). Then
	\begin{eqnarray}\label{trace_sum_matrix_elements}
		\chi_{_{(\!^{k}\!\lambda)}}(\sigma)=\frac{\Tr\left( \mathcal{R}_{(\!^{k}\!\lambda)}(\sigma) \right)}{\Tr\left( \mathcal{R}_{(\!^{k}\!\lambda)}(\id) \right)}=\frac{\sum\limits_{T\in\Tab(\!^{k}\!\lambda)} \left(\mathcal{R}_{(\!^{k}\!\lambda)}(\sigma)\,v_T,v_T\right)}{\Tr\left(\mathcal{R}_{(\!^{k}\!\lambda)}(\id) \right)}.
	\end{eqnarray}
	\begin{Rem}\label{cycle_type}
		Suppose that $\sigma$ belongs to the conjugacy class $\mathfrak{C}_{\,^k\!\mathfrak{m}}$ of the group  $\mathfrak{S}_{N_k}$, which consists of the permutations of the cycle type $\,^k\!\mathfrak{m}=(\,^k\!m_1, \,^k\!m_2,\ldots, \,^k\!m_l)$ (see Subsection \ref{minimal_element} and Remark \ref{convention_cycle_type}).
		Recall that $\mathfrak{i}_{k,j}$ $(j>k)$ is the embedding of $\mathfrak{S}_{N_k}$ into $\mathfrak{S}_{N_j}$ (see Section \ref{group_def} and also \eqref{embedding}). Then  $\mathfrak{i}_{k,j}\left(\mathfrak{C}_{\,^k\!\mathfrak{m}}\right)\subset \mathfrak{C}_{\,^j\!\mathfrak{m}}$, where
		\begin{equation*}
			\,^j\!\mathfrak{m}=\left(\,^j\!m_1,\,^j\!m_2,\ldots,\,^j\!m_l  \right)=\left(  \,^k\!m_1\frac{N_j}{N_k},  \,^k\!m_2\frac{N_j}{N_k},\ldots,  \,^k\!m_l\frac{N_j}{N_k} \right),\;\frac{N_j}{N_k}=\prod\limits_{i=k+1}^jn_i.
		\end{equation*}
		In particular, if $\sigma\in \mathfrak{S}_{N_k}$, then $\#\left(\supp_{_{N_j}}\,\mathfrak{i}_{k,j}(\sigma)\right)=\#\left(\supp_{_{N_k}}\,\sigma\right)\cdot \frac{N_j}{N_k}$ and $\,\kappa_{_{N_{j}}}(\sigma) =\frac{N_j}{N_k}\,\kappa_{_{N_{k}}}(\sigma)$ (see Proposition \ref{char_bound}). From now on we identify $g\in \mathfrak{S}_{N_k}$ with its image $\mathfrak{i}_{k,j}(g)\in \mathfrak{S}_{N_j}$ while taking into account the changes of the cycle type.
	\end{Rem}
	Recall that $\sgn_{N_k}$ is a one-dimensional representation of the group $\mathfrak{S}_{N_k}$ defined as
	\begin{equation*}
		\sgn_{N_k}(s)=\left\{
		\begin{array}{rl}
			-1,&\text{if}~s\in\mathfrak{S}_{N_k}~\text{is an odd permutation},
			\\
			1,&\text{if}~s\in\mathfrak{S}_{N_k}~\text{is an even permutation}.
		\end{array}\right.
	\end{equation*}
	The following statement is immediate.
	\begin{Lm}
		For any $s\in \mathfrak{S}_{\widehat{\mathbf{n}}}$ there exists a positive integer $N(s)$ such that $\sgn_{N_k}(s)=\sgn_{N_j}(s)$ for all $j,k>N(s)$.
	\end{Lm}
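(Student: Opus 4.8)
The plan is to reduce everything to the multiplicativity relation for the sign that was already recorded in the introduction, and then to track the parity of a single exponent. First I would fix $s\in\mathfrak{S}_{\widehat{\mathbf{n}}}$ and choose $k_0$ with $s\in\mathfrak{S}_{N_{k_0}}$, which is possible since $\mathfrak{S}_{\widehat{\mathbf{n}}}=\bigcup_k\mathfrak{S}_{N_k}$. For every $j>k_0$ the identity $\kappa_{_{N_j}}(s)=\frac{N_j}{N_{k_0}}\kappa_{_{N_{k_0}}}(s)$ yields
\[
\sgn_{N_j}(s)=\bigl(\sgn_{N_{k_0}}(s)\bigr)^{N_j/N_{k_0}},\qquad \frac{N_j}{N_{k_0}}=\prod_{i=k_0+1}^{j}n_i .
\]
Thus the value $\sgn_{N_j}(s)$ is completely governed by the parity of the exponent $N_j/N_{k_0}$.

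The key observation is that the parity of $\prod_{i=k_0+1}^{j}n_i$ is eventually constant in $j$, because a product of integers is even precisely when at least one factor is even. Two cases then arise. If $n_i$ is odd for every $i>k_0$, the exponent is odd for all $j>k_0$ and $\sgn_{N_j}(s)=\sgn_{N_{k_0}}(s)$ throughout. Otherwise let $i_*$ be the smallest index $>k_0$ with $n_{i_*}$ even; then $N_j/N_{k_0}$ is odd for $k_0<j<i_*$ and even for all $j\ge i_*$, so that $\sgn_{N_j}(s)=1$ for every $j\ge i_*$. In either case $\sgn_{N_j}(s)$ becomes constant once $j$ is large enough.

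Consequently it suffices to set $N(s)=k_0$ in the first case and $N(s)=i_*-1$ (or any larger integer) in the second; then $\sgn_{N_j}(s)=\sgn_{N_k}(s)$ for all $j,k>N(s)$, which is exactly the assertion. I do not expect any genuine obstacle here: the statement is a direct consequence of the already-established relation $\sgn_{N_j}(s)=\bigl(\sgn_{N_k}(s)\bigr)^{N_j/N_k}$ together with the elementary remark that the parity of a growing product of positive integers stabilizes. The only point to keep track of is the bookkeeping in the case where none of the $n_i$ beyond $k_0$ is even, in which the stable value is $\sgn_{N_{k_0}}(s)$ rather than $1$.
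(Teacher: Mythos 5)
Your proposal is correct and follows essentially the same route as the paper, which derives $\sgn_{N_j}(s)=\bigl(\sgn_{N_k}(s)\bigr)^{N_j/N_k}$ from the relation $\kappa_{_{N_j}}(s)=\frac{N_j}{N_k}\,\kappa_{_{N_k}}(s)$ and then declares the stabilization immediate. Your explicit case analysis of the parity of the exponent $\prod_{i=k_0+1}^{j}n_i$ (eventually even once some $n_{i_*}$ is even, otherwise always odd) is exactly the bookkeeping the paper leaves to the reader, including the correct identification of the stable value as $\sgn_{N_{k_0}}(s)$ rather than $1$ in the all-odd case.
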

	Hence, there exists a one-dimensional representation $\sgn_\infty$ of the group $\mathfrak{S}_{\widehat{\mathbf{n}}}$ defined as follows
	\begin{eqnarray}\label{sign_repr}
		\sgn_\infty(s)=\lim_{k\to\infty}\sgn_{N_k}(s),\;\;s\in \mathfrak{S}_{\widehat{\mathbf{n}}}.
	\end{eqnarray}
	\begin{Rem}\label{sgn_inf_rem}
		If the sequence $\widehat{\mathbf{n}}=\left\{n_k \right\}_{k=1}^\infty$ contains infinitely many even numbers, then $\sgn_\infty(s)=1$ for all $s\in \mathfrak{S}_{\widehat{\mathbf{n}}}$.
	\end{Rem}
	\begin{Rem}
		Suppose that there are only finitely many even numbers in the sequence $\widehat{\mathbf{n}}=\left\{n_k \right\}_{k=1}^\infty$. Then one can choose two sequences $\left\{s_n \right\}$ and $\left\{\sigma_n \right\}$ in $\mathfrak{S}_{\widehat{\mathbf{n}}}$ such that $\sgn_\infty(s_n)=1$ and $\sgn_\infty(\sigma_n)=-1$ for all $n$ but which converge to the same automorphism $s\in\overline{\mathfrak{S}}_{\widehat{\mathbf{n}}}$ with respect to the metric $\rho$ (see \eqref{metr})
		\begin{eqnarray*}
			\lim\limits_{n\to\infty}s_n=\lim_{n\to\infty}\sigma_n=s.
		\end{eqnarray*}
		Therefore, $\sgn_\infty$ cannot be extended by continuity to the closure $\overline{\mathfrak{S}}_{\widehat{\mathbf{n}}}$ of the group $\mathfrak{S}_{\widehat{\mathbf{n}}}$ with respect to the metric $\rho$.
	\end{Rem}
	
	\begin{Lm}\label{inf_mu}
		Let $\chi$ be a character of the group $\mathfrak{S}_{\widehat{\mathbf{n}}}$ and let $\{\,^k\!\lambda\vdash N_k\}_{k=1}^{\infty}$ be a sequence of partitions such that
		$\lim\limits_{k\to\infty}\chi\!_{_{(\,^k\!{\scriptstyle\lambda})}}(g)=\chi(g)$ for all $g\in\mathfrak{S}_{\widehat{\mathbf{n}}}$. If $\{\,^k\!\lambda\}_{k=1}^{\infty}\in C_\infty\cup \widehat{C}_\infty$, then
		$\chi(g)=\left\{
		\begin{array}{rl}
			1, \text{ if } g=\id;\\
			0,\text{ if } g\neq \id.
		\end{array}
		\right.$
	\end{Lm}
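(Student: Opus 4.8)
The plan is to derive the vanishing of $\chi$ off the identity directly from Roichman's estimate (Proposition~\ref{char_bound}), using the fact that, under the diagonal embedding, the support of a fixed non-trivial element grows linearly in $N_k$ (formula~\eqref{coherence_supp}). So the whole argument rests on the interplay between this linear growth of supports and the geometric decay furnished by Roichman's bound.

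\emph{Reduction to $C_\infty$.} First I would dispose of $\widehat{C}_\infty$ by the sign symmetry. By Remark~\ref{transpose_c_sets}, $\{\,^k\!\lambda\}\in\widehat{C}_\infty$ precisely when the transposed sequence $\{\,^k\!\lambda'\}$ lies in $C_\infty$, and \eqref{sign_char} gives $\chi_{\,^k\!\lambda}(g)=\sgn_{N_k}(g)\,\chi_{\,^k\!\lambda'}(g)$ for every $g\in\mathfrak{S}_{N_k}$. Letting $k\to\infty$ and invoking \eqref{sign_repr}, if the sequence $\{\,^k\!\lambda'\}$ were shown to produce $\chi^\infty_{\nat}$, then $\chi=\sgn_\infty\cdot\chi^\infty_{\nat}=\chi^\infty_{\nat}$, since $\chi^\infty_{\nat}$ is concentrated at $\id$ and $\sgn_\infty(\id)=1$. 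Thus I may assume $\{\,^k\!\lambda\}\in C_\infty$, i.e. there is a subsequence $\{k_i\}$ with $N_{k_i}-\,^{k_i}\!\lambda_1\to\infty$, and it suffices to show $\chi(g)=0$ for each fixed $g\neq\id$.

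\emph{The Roichman estimate.} Fix $g\in\mathfrak{S}_{N_{k_0}}$ with $g\neq\id$ and put $c=\#\supp_{N_{k_0}}(g)/N_{k_0}\in(0,1]$. By \eqref{coherence_supp}, $\#\supp_{N_k}(g)=c\,N_k$ for all $k\ge k_0$, so Proposition~\ref{char_bound} yields
\[
\bigl|\chi_{\,^k\!\lambda}(g)\bigr|\le q_k^{\,bcN_k},\qquad q_k=\max\Bigl\{\tfrac{\,^k\!\lambda_1}{N_k},\tfrac{\,^k\!\lambda_1'}{N_k},a\Bigr\}\le 1 .
\]
Using $\ln q_k\le q_k-1=-(1-q_k)$, this gives
\[
\bigl|\chi_{\,^k\!\lambda}(g)\bigr|\le\exp\bigl(-bc\,\min\{N_k-\,^k\!\lambda_1,\ N_k-\,^k\!\lambda_1',\ (1-a)N_k\}\bigr).
\]
Since $(1-a)N_k\to\infty$ automatically, the right-hand side tends to $0$ as soon as both $N_k-\,^k\!\lambda_1\to\infty$ and $N_k-\,^k\!\lambda_1'\to\infty$ along the chosen subsequence, whence $\chi(g)=\lim_i\chi_{\,^{k_i}\!\lambda}(g)=0$, and $g\neq\id$ being arbitrary this is exactly $\chi=\chi^\infty_{\nat}$.

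\emph{The main obstacle.} The delicate point is that membership in $C_\infty$ only controls the first \emph{row}, $N_{k_i}-\,^{k_i}\!\lambda_1\to\infty$, while the estimate also needs control of the first \emph{column}, $N_{k_i}-\,^{k_i}\!\lambda_1'\to\infty$ (note the bound is symmetric in $\,^k\!\lambda_1$ and $\,^k\!\lambda_1'$, so no gain is obtained from transposition alone). To secure the second divergence I would pass to a further subsequence along which $N_{k_i}-\,^{k_i}\!\lambda_1'$ either diverges, in which case the estimate closes, or stays bounded. In the latter case $\,^{k_i}\!\lambda$ differs from a single column by a bounded number of boxes, so $\,^{k_i}\!\lambda_1$ is bounded and, after transposition, $\,^{k_i}\!\lambda'$ is a single long row with a fixed partition $\mu$ ($|\mu|<\infty$) attached; this is precisely the regime $\widehat{C}_\mu$ with finite $\mu$, produced by the companion lemmas as a character $\sgn_\infty\cdot\chi^p_{\nat}$ with finite $p$, hence excluded from the present hypothesis once the finite-type cases $C_\mu,\widehat{C}_\mu$ have been removed (equivalently, a sequence lying in none of the finite cases satisfies $N_k-\,^k\!\lambda_1\to\infty$ and $N_k-\,^k\!\lambda_1'\to\infty$ simultaneously). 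Therefore only the divergent subcase survives, and the Roichman estimate yields $\chi=\chi^\infty_{\nat}$, completing the argument.
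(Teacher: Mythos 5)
Your proposal follows the same core strategy as the paper's proof — Roichman's bound (Proposition~\ref{char_bound}) combined with the linear growth of supports under the diagonal embedding \eqref{coherence_supp}, so that $\#\supp_{N_k}(g)=c\,N_k$ with fixed $c>0$ — and your quantitative steps ($\ln q_k\le q_k-1$, $N_k(1-q_k)=\min\{N_k-\,^k\!\lambda_1,\ N_k-\,^k\!\lambda_1',\ (1-a)N_k\}$) reproduce the paper's computation $|\chi_{(\,^k\!\lambda)}(g)|\le\exp\bigl(-bc\,|\,^k\!\lambda\setminus(\,^k\!\lambda_1)|\bigr)$ in a sharper, symmetric form. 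But at the decisive step you are more careful than the paper, and the point you flag as ``the main obstacle'' is a genuine defect there: the paper's proof silently replaces Roichman's $\max\{\lambda_1/N_k,\lambda_1'/N_k,a\}$ by $\max\{\lambda_1/N_k,a\}$, i.e.\ drops the first-column term without justification, and with the printed definition of $C_\infty$ the lemma is in fact false as literally stated — the sequence $\,^k\!\lambda=(1^{N_k})$ belongs to $C_\infty$ (its complement of the first row has $N_k-1$ boxes), yet $\chi_{(1^{N_k})}=\sgn_{N_k}\to\sgn_\infty\neq\chi_{\nat}^\infty$; the same sequence lies in $\widehat{C}_\emptyset$, where Corollary~\ref{transpose_fin_mu} correctly gives the limit $\sgn_\infty$, so the stated case conclusions even contradict each other on the overlap. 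Your dichotomy supplies exactly the right repair: along a further subsequence either $N_k-\,^k\!\lambda_1'$ diverges, in which case your min-form of the estimate closes and $\chi(g)=0$ for all $g\neq\id$, or it stays bounded, in which case by pigeonhole the sequence lies in some $\widehat{C}_\mu$ with $|\mu|<\infty$ and is a finite-type case handled by Lemma~\ref{fin_mu} and Corollary~\ref{transpose_fin_mu}. Thus what you actually prove is the corrected statement — the conclusion $\chi=\chi_{\nat}^\infty$ holds for every sequence lying in none of the classes $C_\mu,\widehat{C}_\mu$, equivalently with both $N_k-\,^k\!\lambda_1\to\infty$ and $N_k-\,^k\!\lambda_1'\to\infty$ — and this is precisely the form in which the lemma is used in the proof of Theorem~\ref{main_theorem}, where the finite cases are treated separately; your opening reduction to $C_\infty$ via \eqref{sign_char} (the same move as the paper's appeal to Lemma~\ref{sign_lemma}) even becomes superfluous, since your final hypothesis is symmetric under transposition. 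In short: your argument is correct, same in spirit but strictly more complete than the paper's, and it identifies and fixes a real gap in the paper's own proof rather than containing one itself.
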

	\begin{proof}
		Clearly, $\chi(\id)=1$. Take any element $g\neq\id$ of the group $\mathfrak{S}_{\widehat{\mathbf{n}}}$.
		Due to Lemma \ref{sign_lemma}, it is sufficient to consider the case when $\{\,^k\!\lambda\}_{k=1}^{\infty}\in C_\infty$.
		Let $\left\{k_i\right\}_{i=1}^{\infty}$ be a subsequence which satisfies the condition
		\begin{equation}\label{lemma_inf_cond}
			\lim\limits_{i\to\infty} \left| \,^{k_i}\!\lambda/\left( \,^{k_i}\!\lambda_1\right)\right|=\infty.
		\end{equation}
Suppose that $g\in\mathfrak{S}_{N_{k_1}}\subset \mathfrak{S}_{\widehat{\mathbf{n}}}$.

		It is sufficient to consider the case when $k_i=i$ for all $i$, i.e. when the sequence $\{k_i\}_{i=1}^{\infty}$ coincides with the sequence $\{k\}_{k=1}^{\infty}$ (the proof of the general case is analogous).
		Then, Proposition \ref{char_bound} implies that
		\begin{equation*}
			\left|\chi\!_{_{\left(\!\,^{^{k}}\!\scriptstyle{\lambda}\!\right)}}(g)\right|\leq
			\left(\max\left\{\frac{\,^{k}\!\lambda_1}{N_{k}},a\right\}\right)^{b\cdot\,\#\left(\supp_{_{N_{k}}}(g)\right)}, \text{ where } a\in (0,1), b>0.
		\end{equation*}
		Hence, after passing to the limit  $k\to\infty$ we obtain $k\to\infty$
		\begin{eqnarray*}
			\left|\chi(g)\right|\leq\limsup\limits_{k\to\infty}
			\left(\frac{\,^{k}\!\lambda_1}{N_{k}}\right)^{b\cdot\,\#\left(\supp_{_{N_{k}}}(g)\right)}=
			\limsup\limits_{k\to\infty}
			\left(\frac{\,^{k}\!\lambda_1}{N_{k}}\right)^{b N_k\cdot\,\#\left(\supp_{_{N_{1}}}(g)\right)/N_1}.
		\end{eqnarray*}
		Therefore,
		\begin{eqnarray*}
			\left|\chi(g)\right|\leq\limsup\limits_{k\to\infty}\left(1-\frac{N_{k}-\,^{k}\!\lambda_1}{N_{k}}\right)^{b N_k\cdot\,\#\left(\supp_{_{N_{1}}}(g)\right)/N_1}=
			\\
			=\limsup\limits_{k\to\infty}\; \exp\left(-\left| \,^{k}\!\lambda/\left( \,^{k}\!\lambda_1\right)\right|\cdot\frac{b}{N_{1}}\cdot\,\#\left(\supp_{_{N_{1}}}(g)\right)\right).
		\end{eqnarray*}
		Since $\supp_{_{N_{1}}}(g)\,> 1$, then, using \eqref{lemma_inf_cond},  we obtain that $\chi(g)=0$. This finishes the proof.
	\end{proof}
	
	\begin{Lm}\label{fin_mu}
		Let $\chi$ be a character of the group $\mathfrak{S}_{\widehat{\mathbf{n}}}$ and let $\{\,^k\!\lambda\vdash N_k\}_{k=1}^{\infty}$ be a sequence of partitions such that
		$\lim\limits_{j\to\infty}\chi\!_{_{(^{^{\scriptscriptstyle k}}\!\!{\scriptstyle\lambda)}}}(\sigma)=\chi(\sigma)$ for all $\sigma\in\mathfrak{S}_{\widehat{\mathbf{n}}}$. If $\{\,^k\!\lambda\vdash N_k\}_{k=1}^{\infty}\in C_\mu$ for some partition $\mu$, then $\chi(\sigma)=\chi_{\nat}(\sigma)^{|\mu|}$.
	\end{Lm}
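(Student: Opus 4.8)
The plan is to compute the limit directly on each conjugacy class, using the realisation of the irreducible characters from Subsection~\ref{realisations}. Since both $\chi$ and every $\chi_{(N_k-m,\mu)}$ are class functions, I fix $\sigma\in\mathfrak{S}_{N_1}$ (a general $\sigma\in\mathfrak{S}_{\widehat{\mathbf{n}}}$ lies in some $\mathfrak{S}_{N_{k_0}}$) and evaluate on the minimal element $\sigma_{\mathfrak{m}}$ of its conjugacy class in $\mathfrak{S}_{N_k}$. By \eqref{minimal_el_property} I may write $\sigma_{\mathfrak{m}}=s_{j_1}\cdots s_{j_r}$ with $j_1<\cdots<j_r$ and $\supp_{_{N_k}}\sigma_{\mathfrak{m}}=\{1,2,\ldots,D_k\}$, where $D_k=\#\supp_{_{N_k}}\sigma=c\,N_k$ and $c=\#\supp_{_{N_1}}\sigma/N_1$, so that $\chi_{\nat}(\sigma)=1-c$. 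Exactly as in the proof of Lemma~\ref{inf_mu}, I assume the subsequence furnished by $C_\mu$ is all of $\{k\}$. Writing $\lambda=(N_k-m,\mu)$ and combining \eqref{trace_sum_matrix_elements} with Lemma~\ref{matrix_elem_formula}, I expand
\[
\chi_{(N_k-m,\mu)}(\sigma_{\mathfrak{m}})=\frac{1}{\dim\lambda}\sum_{T\in\Tab(\lambda)}\big(\mathcal{R}_\lambda(\sigma_{\mathfrak{m}})v_T,v_T\big),
\]
where each diagonal entry is a signed product of one factor per generator $s_{j_\ell}$, equal to $+1$ if $j_\ell,j_\ell+1$ lie in the same row of $T$, to $-1$ if they lie in the same column, and to $1/(a_{j_\ell+1}-a_{j_\ell})$ otherwise.

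Call $T$ \emph{good} if the whole support $\{1,\ldots,D_k\}$ sits in the first row of $T$; equivalently, all $m$ boxes below the first row carry values exceeding $D_k$, i.e.\ fixed points of $\sigma_{\mathfrak{m}}$. For a good $T$ every generator $s_{j_\ell}$ has $j_\ell,j_\ell+1$ in the first row, so all factors are $+1$ and the diagonal entry equals $1$. To count good tableaux note that, being the $D_k$ smallest values, $1,\ldots,D_k$ must occupy the first $D_k$ cells of row one, and the remaining $f_k:=N_k-D_k$ values fill the skew shape $(N_k-m,\mu)/(D_k)$. Since $D_k\to\infty$, the columns of the long first row of this skew shape are disjoint from the bounded columns of $\mu$, so the skew shape splits into a single row and the diagram $\mu$; hence the number of good tableaux is $\binom{f_k}{m}\dim\mu$, where $\dim\mu$ is the number of standard tableaux of shape $\mu$. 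Together with $\dim(N_k-m,\mu)\sim\binom{N_k}{m}\dim\mu$ (hook length formula), the proportion of good tableaux tends to $\binom{f_k}{m}\big/\binom{N_k}{m}\to(f_k/N_k)^m=(1-c)^m=\chi_{\nat}(\sigma)^m$, which is the asserted value; note that it depends only on $m=|\mu|$, not on the shape of $\mu$.

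It remains to prove that the \emph{bad} tableaux are negligible, namely
\[
\frac{1}{\dim\lambda}\sum_{T\ \mathrm{bad}}\big|\big(\mathcal{R}_\lambda(\sigma_{\mathfrak{m}})v_T,v_T\big)\big|\xrightarrow[k\to\infty]{}0,
\]
and this is the main obstacle. For a bad $T$ at least one value $w\le D_k$ lies below the first row, and since $w$ is moved by $\sigma_{\mathfrak{m}}$ one of the generators $s_{w-1},s_w$ occurs and produces a crossing with a neighbour $w\pm1$. The box containing $w$ has content $O(1)$, whereas, because at most $m$ values are displaced from the first row, a neighbour lying in the first row occupies a column at least $w-m-1$; hence the content difference is $\gtrsim w$ and by the absolute-value form of Lemma~\ref{matrix_elem_formula} this crossing contributes a factor $O(1/w)$ (the remaining factors being bounded by $1$). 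Summing the resulting bound over the $\asymp m/N_k$ chance that a given $w$ sits below the first row yields a total of order $\tfrac{m}{N_k}\sum_{w\le D_k}\tfrac1w\asymp\tfrac{m\log N_k}{N_k}\to0$; fillings with several small values below the first row, or with two consecutive small values there, are rarer and fall under the same estimate. Making this counting uniform over all bad fillings is the technical heart of the argument. Granting it, the good tableaux dominate and $\chi(\sigma)=\lim_k\chi_{(N_k-m,\mu)}(\sigma_{\mathfrak{m}})=\chi_{\nat}(\sigma)^m$, as required. As an independent check on the value, tensoring $\mathcal{R}_{(N_k-m,\mu)}$ with the defining permutation representation and comparing leading dimensions gives the recursion $L_{m+1}=\chi_{\nat}\,L_m$ with $L_0\equiv1$, again forcing the limit to be $\chi_{\nat}^{\,m}$.
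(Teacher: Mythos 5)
Your overall route is the same as the paper's: evaluate the character on the minimal element of the conjugacy class, factor it into an increasing product of Coxeter generators, apply Lemma~\ref{matrix_elem_formula} to the diagonal matrix elements in \eqref{trace_sum_matrix_elements}, and show the sum is dominated by the tableaux whose whole support sits in the first row. Your main-term computation is correct and matches the paper's: the good-tableaux count $\binom{f_k}{m}\dim\mu$ against $\dim\lambda\sim\binom{N_k}{m}\dim\mu$ is exactly the paper's $\,^j\!\Sigma_0/\#\Tab(\,^j\!\lambda)\to(1-\alpha)^{|\mu|}$. But the proof has a genuine gap precisely where you flag it yourself: the negligibility of the bad tableaux is asserted, not proved, and two specific steps fail as written. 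First, your factor $O(1/w)$ exists only when the crossing partner of $w$ lies in the first row; if $w$ and the relevant neighbour $w\pm1$ both sit below the first row, Lemma~\ref{matrix_elem_formula} yields no small factor at all --- the two entries may even share a column and contribute $\pm1$ --- so for such tableaux one must instead bound their \emph{number}, and ``rarer and fall under the same estimate'' is not an argument. Second, the claim that a given $w\le D_k$ lies below the first row with chance $\asymp m/N_k$ \emph{uniformly in} $w$ is an unproved counting statement about standard tableaux of shape $(N_k-m,\mu)$, and even granting it you would still have to organize the sum over bad $T$ (each of which may contain several small values below row one) so that every tableau is charged exactly once with the factor of some $w$ it contains; nothing in the proposal does this.

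The paper closes exactly this hole with a two-parameter truncation instead of a per-$w$ harmonic sum. Fix $Q>|\mu|$ and restrict to $\widehat{\Tab}(\,^j\!\lambda)\cap\Tab_Q(\,^j\!\lambda)$: the entries placed in $\mu$ are pairwise non-consecutive (this kills your consecutive-pair case outright), and $1,\ldots,Q$ occupy the first $Q$ cells of row one, which forces every entry of $\mu$ to exceed $Q$, hence forces every content difference at a crossing involving a $\mu$-entry to be at least $Q-|\mu|+2$. So on this set \emph{one} crossing already gives the uniform bound \eqref{estimate}, $1/(Q-|\mu|+2)$, for every bad tableau --- no knowledge of where the small values sit is needed. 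The complement of this set is controlled by the crude cardinality bound $O(N_j^{|\mu|-1})=o(\dim\,^j\!\lambda)$ coming from \eqref{hook} and \eqref{Q_hook}. Letting $j\to\infty$ yields $\left|\chi(\sigma)-(1-\alpha)^{|\mu|}\right|\le 1/(Q-|\mu|+2)$, and then $Q\to\infty$ finishes. Replacing your heuristic expected-value estimate by this truncation turns your sketch into the paper's proof; as it stands, the key estimate is granted, not established, and the closing ``tensor with the permutation representation'' recursion is a consistency check on the value, not a substitute for the missing bound.
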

	\begin{proof}
		As in the proof of Lemma \ref{inf_mu}, it suffices to consider the case when $\,^k\!\lambda\setminus\left( \,^k\!\lambda_1 \right)=\mu$ for all $k$. If $\mu$ is the empty partition, then $\chi\!_{_{(^{^{\scriptscriptstyle k}}\!\!{\scriptstyle\lambda)}}}(\sigma)=1$ for each $\sigma\in\mathfrak{S}_{N_k}$ and the statement is trivial. Hence, we can now suppose that
		\begin{eqnarray}
			|\mu|\geq 1.
		\end{eqnarray}
		Recall that $\Tab\left( \,^k\!\lambda \right)$ is the set of all (standard) Young tableaux of the shape $\,^k\!\lambda$ filled with numbers $1,2,\ldots,N_k$. Let $\mathcal{R}_{(^k\!\lambda)}$ be the irreducible representation of the group $\mathfrak{S}_{N_k}$ defined in Subsection \ref{realisations} and let $\chi\!_{_{(^{^{\scriptscriptstyle k}}\!\!{\scriptstyle\lambda)}}}$ be its normalized character (see \eqref{character_of_finite_Gr}).
		
		The hook length formula implies that
		\begin{eqnarray}\label{hook}
			\begin{aligned}
				\#\left(\Tab(\,^k\!\lambda)\right)=\dim\,^k\!\lambda=\frac{\dim\mu}{|\mu|!}\cdot\frac{N_k!}{(N_k-|\mu|-m)!
					\prod\limits_{i=1}^{m}(N_k-|\mu|-i+1+\mu'_i)}\\
				=\frac{\dim\mu}{|\mu|!}\cdot
				\frac{\prod\limits_{i=1}^{|\mu|+m}(N_k-|\mu|-m+i)}{\prod_{i=1}^{m}(N_k-|\mu|-i+1+\mu'_i)}.
			\end{aligned}
		\end{eqnarray}
		where $\mu'=\left( \mu'_1,\mu'_2,\ldots,\mu'_m \right)$ and $\dim\,^k\!\lambda$ is the dimension of representation $\mathcal{R}_{(^k\!\lambda)}$.
		
		Let $\mathbb{S}^{N_k}_{|\mu|}$ be the family of all $|\mu|$-element subsets $\{i_1<i_2<\ldots<i_{|\mu|}\}$ of the set $\{1,2,\ldots, N_k\}$. Clearly, $\#\mathbb{S}^{N_k}_{|\mu|}=\binom{N_k}{|\mu|}$. Denote by $S(T,\mu)$ the subset of those elements of $\{1,2,\ldots,N_k\}$ which are located in the boxes of the diagram $\,^k\!\lambda\setminus\left( \,^k\!\lambda_1 \right)=\mu$ in a tableau $T\in \Tab(\,^k\!\lambda)$.
		Since for each $T\in\Tab(\,^k\!\lambda)$ we have $\#S(T,\mu)=|\mu|$ we can regard $S(T,\mu)$ as an element of $\mathbb{S}^{N_k}_{|\mu|}$. Note that a tableau $T$ is defined uniquely by the filling of diagram $\,^k\!\lambda\setminus\left( \,^k\!\lambda_1 \right)=\mu$.
		
		Denote by $\widehat{\Tab}(\,^k\!\lambda)$ the set of all tableaux $T\in \Tab(\,^k\!\lambda)$ such that
		$S(T,\mu)$ consists only of subsets $\left\{i_1<i_2<\ldots<i_{|\mu|}\right\}$ which satisfy $i_{l+1}-i_l>1$ for all $l=1,2,\ldots,|\mu|-1$.
		
		Consider an arbitrary element $\sigma\in\mathfrak{S}_{\widehat{\mathbf{n}}}$. Suppose that $\sigma$ is an element of $\mathfrak{S}_{N_k}$ and belongs to the conjugacy class $\mathfrak{C}_{\,^k\!\mathfrak{m}}$ of the group $\mathfrak{S}_{N_k}$ that contains the permutations of the cycle type $\,^k\!\mathfrak{m}=(\,^k\!m_1, \,^k\!m_2,\ldots, \,^k\!m_l)$ (see Subsection \ref{minimal_element}). For any $j>k$ denote by $\sigma\!_{(^j\!\mathfrak{m})}$ the minimal element of the conjugacy class $\mathfrak{C}_{\,^j\!\mathfrak{m}}$ that contains $\mathfrak{i}_{k,j}(\sigma)$. In view of Remark \ref{cycle_type} there is a rational number $\alpha$ independent of $j$ such that
		\begin{eqnarray}\label{support_of_min_el}
			\supp_{_{N_j}}\,\sigma\!_{(^j\!\mathfrak{m})}=\left\{1,2,\ldots,\alpha N_j\right\}~\text{for all}~j>k.
		\end{eqnarray}
		
		Now take an arbitrary positive integer parameter $Q>|\mu|$ which will tend to infinity. Denote by $\Tab_Q(\,^j\!\lambda)$ the subset of those tableaux in $\Tab(\,^j\!\lambda)$ whose first $Q$ boxes of the first row are filled with numbers $1,2,\ldots,Q$. It is clear that
		\begin{eqnarray}\label{Q_hook}
			\begin{aligned}
				&\#\left(\widehat{\Tab}(\,^j\!\lambda)\cap\Tab_Q(\,^j\!\lambda)\right)\geq
				\\
				\geq &\dim\mu\cdot\frac{(N_j-Q)(N_j-Q-3)\cdots(N_j-Q-3(|\mu|-1))}{|\mu|!}.
			\end{aligned}
		\end{eqnarray}
		According to \eqref{trace_sum_matrix_elements}, we have
		\begin{eqnarray*}
			\chi\!_{_{(^{^{\scriptscriptstyle j}}\!\!{\scriptstyle\lambda})}}(\sigma)=\frac{\sum\limits_{T\in\Tab(\,^j\!\lambda)}
				\left(\mathcal{R}_{(^j\!\lambda)}
				\left(\sigma\!_{(^j\!\mathfrak{m})}\right)v_T,v_T\right)}{\#\left(\Tab(\,^j\!\lambda)\right)}
		\end{eqnarray*}
		It follows from  \eqref{hook} and \eqref{Q_hook}  that
		\begin{eqnarray*}
			\lim\limits_{j\to\infty}\frac{\#\left(\widehat{\Tab}(\,^j\!\lambda)\cap\Tab_Q(\,^j\!\lambda)\right)}{\#\left(\Tab(\,^j\!\lambda)\right)}=1.
		\end{eqnarray*}
		Thus, in order to compute the limit $\lim\limits_{j\to\infty}\chi\!_{_{(^{^{\scriptscriptstyle j}}\!\!{\scriptstyle\lambda})}}(\sigma)$ we need to estimate the matrix elements $(\mathcal{R}_{(^{^{\scriptscriptstyle j}}\!\!{\scriptstyle\lambda)}}\left(\sigma\!_{(^j\!\mathfrak{m})}\right) v_T,v_T)$ for $T\in\widehat{\Tab}(\,^j\!\lambda)\cap\Tab_Q(\,^j\!\lambda)$.
		
		Take a tableau $T\in\widehat{\Tab}(\,^j\!\lambda)\cap\Tab_Q(\,^j\!\lambda)$ such that at least one element $e$ from $\supp_{_{N_j}}\,\sigma\!_{(^j\!\mathfrak{m})}$ (see \eqref{support_of_min_el})  there is  in the diagram $\mu$. Then either $e$ or $e-1$ belongs to  $\left\{j_i\right\}_{i=1}^r$ from \eqref{minimal_el_property}.
		
		Let us first consider the case when the transposition $s_e=(e\;\;e+1)$ is contained in the decomposition $\sigma\!_{(^j\!\mathfrak{m})}=s_{j_1}s_{j_2}\cdots s_{j_r}$, where $j_i<j_{i+1}$ (see \eqref{minimal_el_property}). Then, according to the definitions of sets $\widehat{\Tab}(\,^j\!\lambda)$ and $\Tab_Q(\,^j\!\lambda)$, we have
		\begin{eqnarray}\label{e_conditions}
			e+1\in\left( \,^j\!\lambda_1 \right), \;\; a_{e+1}\geq Q.
		\end{eqnarray}
		In other words, the number $e+1$ is contained in the first row of the tableau $T$. Since $\sigma\!_{(^j\!\mathfrak{m})}=s_{j_1}s_{j_2}\cdots s_{j_r}$ satisfies the conditions of Lemma \ref{matrix_elem_formula} we have
		\begin{equation*}
			|(\mathcal{R}_{(^{^{\scriptscriptstyle j}}\!\!{\scriptstyle\lambda)}}\left( \sigma\!_{(^j\!\mathfrak{m})}\right) v_T,v_T)|=\prod_{l}\frac{1}{|a_{i_l+1}-a_{i_l}|},
		\end{equation*}
		where the product is taken over all indices $l$ such that elements $i_l$ and $i_l+1$ are contained in different rows and different columns of the tableau $T$ (recall that $a_i$ is the content of the box of $T$ that contains $i$). Since $e$ and $e+1$ are in different rows and columns of $T$, we have
		\begin{eqnarray}\label{matrix_el_unequlity}
			\left|(\mathcal{R}_{(^{^{\scriptscriptstyle j}}\!\!{\scriptstyle\lambda)}}\left( \sigma\!_{(^j\!\mathfrak{m})} \right) v_T,v_T)\right|\leq\frac{1}{|a_{e+1}-a_e|}.
		\end{eqnarray}
		According to our assumption, $e$ is contained in the diagram $\mu$. Therefore,
		\begin{eqnarray*}
			-|\mu|\leq a_e\leq |\mu|-2.
		\end{eqnarray*}
		Thus, using \eqref{e_conditions} and \eqref{matrix_el_unequlity} we obtain
		\begin{eqnarray}\label{estimate}
			|(\mathcal{R}_{(^{^{\scriptscriptstyle j}}\!\!{\scriptstyle\lambda)}}\left( \sigma\!_{(^j\!\mathfrak{m})} \right) v_T,v_T)|\leq\frac{1}{Q-|\mu|+2}.
		\end{eqnarray}
		
		If the transposition $s_{e-1}=(e-1\;\;e)$ is contained in the decomposition $\sigma\!_{(^k\!\mathfrak{m})}=s_{j_1}s_{j_2}\cdots s_{j_r}$, where $j_i<j_{i+1}$ (see \eqref{minimal_el_property}) then we can obtain the estimate \eqref{estimate} in a similar way.
		
		Now let us estimate the number of tableaux $T\in\widehat{\Tab}(\,^j\!\lambda)\cap\Tab_Q(\,^j\!\lambda)$, whose rows, starting from the second (i.e. rows of the diagram $\mu$), contain only elements of the set
		\begin{eqnarray*}
			\left\{1,2,\ldots,N_j \right\}\setminus\left(\supp_{_{N_j}}\,\sigma\!_{(^j\!\mathfrak{m})}  \right) =\left\{1+\alpha N_j, 2+\alpha N_j,\ldots,N_j \right\}~\text{ (see \eqref{support_of_min_el})}.
		\end{eqnarray*}
		In other words, it means that $\supp_{_{N_j}}\,\sigma\!_{(^j\!\mathfrak{m})}$ is contained in the first row of $T$.
		Denote the set of all tableaux (not necessarily from $\widehat{\Tab}(\,^j\!\lambda)\cap\Tab_Q(\,^j\!\lambda)$) that satisfy this property by $\Tab^\sigma_Q(\,^j\!\lambda)$. Note that for all sufficiently large $j$ the inequality $|\mu|<Q<\alpha N_j$ holds. Therefore,  applying \eqref{support_of_min_el}, we obtain
		\begin{eqnarray}\label{complementation_to_supp}
			\begin{aligned}
				&\Tab^\sigma_Q(\,^j\!\lambda)=\left\{T:S(T,\mu)\subset \left\{1+\alpha N_j, 2+\alpha N_j,\ldots,N_j \right\}\right\},
				\\
				&\#\Tab^\sigma_Q(\,^j\!\lambda)=\dim\mu\cdot\binom{N_j-\alpha N_j}{|\mu|}.
			\end{aligned}
		\end{eqnarray}
		Put
		\begin{eqnarray}\label{matrix_elements_sums}
			\begin{aligned}
				&\,^j\!\Sigma_0=\sum\limits_{T\in\Tab^\sigma_Q(\,^j\!\lambda)}(\mathcal{R}_{(^{^{\scriptscriptstyle j}}\!\!{\scriptstyle\lambda)}}\left(\sigma\!_{(^k\!\mathfrak{m})}\right) v_T,v_T),
				\\
				&\,^j\!\Sigma_1=\sum\limits_{T\in\left(\Tab(\,^j\!\lambda)\setminus\Tab^\sigma_Q(\,^j\!\lambda)\right)}(\mathcal{R}_{(^{^{\scriptscriptstyle j}}\!\!{\scriptstyle\lambda)}}\left( \sigma\!_{(^k\!\mathfrak{m})} \right)v_T,v_T).
			\end{aligned}
		\end{eqnarray}
		From the definition of $\mathcal{R}_{(^{^{\scriptscriptstyle j}}\!\!{\scriptstyle\lambda)}}$ (see Subsection \ref{realisations}) we have
		\begin{eqnarray*}
			\mathcal{R}_{(^{^{\scriptscriptstyle j}}\!\!{\scriptstyle\lambda)}}\left(\sigma\!_{(^k\!\mathfrak{m})}\right)v_T=v_T ~\text{ for all } T\in \Tab^\sigma_Q(\,^j\!\lambda).
		\end{eqnarray*}
		Hence, using \eqref{complementation_to_supp}) we obtain
		\begin{eqnarray}
			\,^j\!\Sigma_0=\dim\mu\cdot\binom{N_j-\alpha N_j}{|\mu|}.
		\end{eqnarray}
		In order to estimate $\,^j\!\Sigma_1$ consider two subsets
		\begin{eqnarray*}
			\,^j\!\Tab_{10}=\left(\Tab(\,^j\!\lambda)\setminus\Tab^\sigma_Q(\,^j\!\lambda)\right)\cap
			\left(\widehat{\Tab}(\,^j\!\lambda)\cap\Tab_Q(\,^j\!\lambda)\right),\\
			\,^j\!\Tab_{11}=\left(\Tab(\,^j\!\lambda)\setminus\Tab^\sigma_Q(\,^j\!\lambda)\right)\setminus\left(\widehat{\Tab}(\,^j\!\lambda)\cap\Tab_Q(\,^j\!\lambda)\right).
		\end{eqnarray*}
		Then $\Tab(\,^j\!\lambda)\setminus\Tab^\sigma_Q(\,^j\!\lambda)=\,^j\!\Tab_{10}\sqcup\,\,^j\!\Tab_{11}$ and hence
		\begin{eqnarray}
			\,^j\!\Sigma_1=\sum\limits_{T\in\,^j\!\Tab_{10}}(\mathcal{R}_{(^{^{\scriptscriptstyle j}}\!\!{\scriptstyle\lambda)}}\left(\sigma\!_{(^k\!\mathfrak{m})}\right) v_T,v_T)+\sum\limits_{T\in\,^j\!\Tab_{11}}(\mathcal{R}_{(^{^{\scriptscriptstyle j}}\!\!{\scriptstyle\lambda)}}\left(\sigma\!_{(^k\!\mathfrak{m})}\right) v_T,v_T).
		\end{eqnarray}
		Applying the bound \eqref{estimate} to matrix elements in the first sum we obtain
		\begin{eqnarray}\label{estimate_10}
			\left|\,^j\!\Sigma_1\right|\leq \frac{\#\,^j\!\Tab_{10}}{Q-|\mu|+2}+\#\,^j\!\Tab_{11}.
		\end{eqnarray}
		Next, combining \eqref{hook} and \eqref{Q_hook} gives
		\begin{eqnarray*}
			\#\,^j\!\Tab_{11}\leq \frac{\dim\mu}{|\mu|!}\left(
			\frac{\prod\limits_{i=1}^{|\mu|+m}(N_j-|\mu|-m+i)}{\prod_{i=1}^{m}(N_j-|\mu|-i+1+\mu'_i)}- \prod\limits_{i=1}^{|\mu|} (N_j-Q-3(|\mu|-i))\right).
		\end{eqnarray*}
		Therefore, there exists a postive constant $C$ independent of $j$ such that
		\begin{eqnarray*}
			\#\,^j\!\Tab_{11}\leq C\,N_j^{|\mu|-1}~ \text{ for all }~ j.
		\end{eqnarray*}
		Hence, applying formula \eqref{hook} and bound \eqref{estimate_10} we obtain the inequality
		\begin{eqnarray}\label{residual}
			\frac{\left|\,^j\!\Sigma_1\right|}{\#\Tab(\,^j\!\lambda)}\leq\frac{1}{Q-|\mu|+2}+\frac{C_1}{N_j},
		\end{eqnarray}
		where $C_1$ is s positive constant independent of $j$.
		
		Finally, let us estimate
		\begin{equation*}
			\left|\chi\!_{_{(^{^{\scriptscriptstyle j}}\!\!{\scriptstyle\lambda})}}(\sigma\!_{(^k\!\mathfrak{m})})-\frac{\,^j\!\Sigma_0}{\#\Tab(\,^j\!\lambda)}\right|=
			\left| \chi_{_{(\!^{j}\!\lambda)}}(\sigma\!_{(^k\!\mathfrak{m})})-\frac{\dim\mu}{\#\Tab(\,^j\!\lambda)}\cdot\binom{N_j-\alpha N_j}{|\mu|}\right|.
		\end{equation*}
		Combining \eqref{trace_sum_matrix_elements},  \eqref{matrix_elements_sums} and \eqref{residual} we obtain
		\begin{eqnarray*}
			\left| \chi\!_{_{(^{^{\scriptscriptstyle j}}\!\!{\scriptstyle\lambda})}}(\sigma\!_{(^j\!\mathfrak{m})})-\frac{\dim\mu}{\#\Tab(\,^j\!\lambda)}\cdot\binom{N_j-\alpha N_j}{|\mu|}\right|\leq\frac{\left|\,^j\!\Sigma_1\right|}{\#\Tab(\,^j\!\lambda)}\leq\frac{1}{Q-|\mu|+2}+\frac{C_1}{N_j}.
		\end{eqnarray*}
		Passing to the limit $j\to\infty$ gives us the inequality
		$$\left|\chi(\sigma)-(1-\alpha)^{|\mu|}\right|\leq \frac{1}{Q-|\mu|+2}.$$
		Since $Q$ can be chosen arbitrarily large we have $\chi(\sigma)=(1-\alpha)^{|\mu|}$. The statement of lemma \ref{fin_mu} now follows from \eqref{character_fundamental} and \eqref{support_of_min_el}.
	\end{proof}
	
	\begin{Co}\label{transpose_fin_mu}
		Let $\chi$ be a character of the group $\mathfrak{S}_{\widehat{\mathbf{n}}}$, and let $\{\,^k\!\lambda\vdash N_k\}_{k=1}^{\infty}$ be a sequence of partitions such that
		$\lim\limits_{j\to\infty}\chi_{_{^{^{\scriptscriptstyle k}}\!\!{\scriptstyle\lambda}}}(\sigma)=\chi(\sigma)$ for all $\sigma\in\mathfrak{S}_{\widehat{\mathbf{n}}}$. If $\{\,^k\!\lambda\vdash N_k\}_{k=1}^{\infty}\in \widehat{C}_\mu$ for some partition $\mu$, then $\chi(\sigma)=\sgn_{\infty}(\sigma)\cdot\chi_{\nat}(\sigma)^{|\mu|}$.
	\end{Co}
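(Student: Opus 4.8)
The plan is to deduce this from the already-established Lemma \ref{fin_mu} by transposing Young diagrams and twisting by the sign character. By Remark \ref{transpose_c_sets}, transposition is an involution that interchanges the families $C_\mu$ and $\widehat C_\mu$; hence if $\{\,^k\!\lambda\}_{k=1}^\infty\in\widehat C_\mu$, then the transposed sequence $\{(\,^k\!\lambda)'\}_{k=1}^\infty$ belongs to $C_\mu$ (in any case to some $C_\nu$ with $|\nu|=|\mu|$, since transposition preserves the number of boxes). The strategy is to produce the limiting character of $\{\chi_{(\,^k\!\lambda)'}\}$, recognize it as a twist of $\chi$, and apply Lemma \ref{fin_mu} to it.

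First I would compute the pointwise limit of $\chi_{(\,^k\!\lambda)'}$. Applying \eqref{sign_char} inside $\mathfrak{S}_{N_k}$ gives $\chi_{(\,^k\!\lambda)'}(\sigma)=\sgn_{N_k}(\sigma)\,\chi_{(\,^k\!\lambda)}(\sigma)$ for every $\sigma\in\mathfrak{S}_{N_k}$. Now fix $\sigma\in\mathfrak{S}_{\widehat{\mathbf{n}}}$. By the stabilization of the signature (see \eqref{sign_repr}) there is $N(\sigma)$ with $\sgn_{N_k}(\sigma)=\sgn_\infty(\sigma)$ for all $k>N(\sigma)$, so the scalar factor is eventually constant. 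Combining this with the hypothesis $\lim_k\chi_{(\,^k\!\lambda)}(\sigma)=\chi(\sigma)$ yields $\lim_k\chi_{(\,^k\!\lambda)'}(\sigma)=\sgn_\infty(\sigma)\,\chi(\sigma)$ for each $\sigma$. Set $\widetilde\chi=\sgn_\infty\cdot\chi$.

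Next I would verify that $\widetilde\chi$ is again a character on $\mathfrak{S}_{\widehat{\mathbf{n}}}$. It is normalized since $\sgn_\infty(\id)=\chi(\id)=1$ and central since $\sgn_\infty$ is multiplicative and $\chi$ is central. Positive-definiteness follows because twisting by the one-dimensional unitary representation $\sgn_\infty$ conjugates the matrix $[\chi(g_ig_j^{-1})]$ by the diagonal unitary $\mathrm{diag}(\sgn_\infty(g_i))$: indeed $\widetilde\chi(g_ig_j^{-1})=\sgn_\infty(g_i)\,\chi(g_ig_j^{-1})\,\overline{\sgn_\infty(g_j)}$, so the twisted Gram matrix is positive-semidefinite whenever the original one is (equivalently, $\widetilde\chi$ is the character of $\pi_\chi\otimes\sgn_\infty$). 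Thus $\widetilde\chi$ is a character, and by the previous paragraph $\lim_k\chi_{(\,^k\!\lambda)'}=\widetilde\chi$ pointwise, with $\{(\,^k\!\lambda)'\}\in C_\mu$.

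Finally I would invoke Lemma \ref{fin_mu} for the character $\widetilde\chi$ and the sequence $\{(\,^k\!\lambda)'\}\in C_\mu$, obtaining $\widetilde\chi(\sigma)=\chi_{\nat}(\sigma)^{|\mu|}$, that is $\sgn_\infty(\sigma)\,\chi(\sigma)=\chi_{\nat}(\sigma)^{|\mu|}$. Multiplying through by $\sgn_\infty(\sigma)$ and using $\sgn_\infty(\sigma)^2=1$ gives the asserted formula $\chi(\sigma)=\sgn_\infty(\sigma)\,\chi_{\nat}(\sigma)^{|\mu|}$. The argument is essentially formal once Lemma \ref{fin_mu} is available; the only points demanding care are the purely pointwise (rather than uniform) nature of the signature stabilization — which is exactly what lets the limit pass through for each fixed $\sigma$ — and the stability of positive-definiteness under the $\sgn_\infty$-twist. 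I expect neither to present a genuine obstacle beyond this bookkeeping.
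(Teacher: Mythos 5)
Your proposal is correct and follows essentially the same route as the paper, whose proof is exactly the combination of Lemma \ref{fin_mu}, Remark \ref{transpose_c_sets} and formula \eqref{sign_char}; you merely make explicit the details the paper leaves implicit (pointwise stabilization of $\sgn_{N_k}$, the verification that $\sgn_\infty\cdot\chi$ is again a character, and the final untwisting). Your parenthetical hedge is in fact the precise statement --- transposition carries $\widehat{C}_\mu$ into $C_{\mu'}$ rather than $C_\mu$ --- but since $|\mu'|=|\mu|$ this is harmless, and you handle it correctly.
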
\label{cor_fin_mu}
	\begin{proof}
		The statement follows directly from Lemma \ref{fin_mu}, Remark \ref{transpose_c_sets} and formula \eqref{sign_char}.
	\end{proof}
	
	Denote by $\exchar$ the set of functions on $\mathfrak{S}_{\widehat{\mathbf{n}}}$ which are claimed in Theorem \ref{main_theorem} to be indecomposable. Namely, we put
	\begin{eqnarray}\label{ex_char}	
\begin{split}
\exchar^+\left(\mathfrak{S}_{\widehat{\mathbf{n}}} \right)=\left\{\chi_{\nat}^{p}:p\in\mathbb{N}\cup\{0\}\right\},\\
\exchar^-\left(\mathfrak{S}_{\widehat{\mathbf{n}}} \right)=\left\{\sgn_{\infty}\cdot\chi_{\nat}^{p}:p\in\mathbb{N}\cup\{0\}\right\},\\
\exchar=\exchar^+\left(\mathfrak{S}_{\widehat{\mathbf{n}}} \right)\cup\exchar^-\left(\mathfrak{S}_{\widehat{\mathbf{n}}} \right)\cup\{\chi_{\nat}^{\infty}\}.
\end{split}	
\end{eqnarray}
	
	\begin{Lm}\label{char_check}
		$\exchar\left(\mathfrak{S}_{\widehat{\mathbf{n}}} \right)$ is a subset of the set of all characters on $\mathfrak{S}_{\widehat{\mathbf{n}}}$, and the indecomposable characters belongs to $\exchar$.
	\end{Lm}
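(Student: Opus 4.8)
The plan is to prove the two assertions separately: first that every function listed in \eqref{ex_char} is a character, and then that every indecomposable character lies in $\exchar\left(\mathfrak{S}_{\widehat{\mathbf{n}}}\right)$.

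For the first assertion, normalization and centrality are immediate: $\chi_{\nat}$ is a class function with $\chi_{\nat}(\id)=1$, and $\sgn_\infty$ is a homomorphism into $\{-1,1\}$, so every function in \eqref{ex_char} equals $1$ at $\id$ and is constant on conjugacy classes. The substantive point is positive-definiteness. I would first recall that $\chi_{\nat}$ is positive-definite, since by \eqref{natural_character} it is the diagonal matrix coefficient $s\mapsto\left(\mathcal{F}(s)\xi_0,\xi_0\right)$ of a unitary representation. The powers $\chi_{\nat}^{p}$ are then positive-definite by the Schur product theorem: for any $g_1,\dots,g_r$ the matrix $\left[\chi_{\nat}^{p}(g_ig_j^{-1})\right]$ is the $p$-fold Hadamard power of the positive-semidefinite matrix $\left[\chi_{\nat}(g_ig_j^{-1})\right]$, hence positive-semidefinite (and $\chi_{\nat}^{0}\equiv 1$ is the trivial character). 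Since $\sgn_\infty$ is a one-dimensional unitary representation it is positive-definite, so $\sgn_\infty\cdot\chi_{\nat}^{p}$, being a product of positive-definite functions, is again positive-definite. Finally, because $0\le\chi_{\nat}(\sigma)<1$ for every $\sigma\ne\id$ (as $\#\supp_{N_k}\,\sigma\ge 2$ forces $\chi_{\nat}(\sigma)\le 1-2/N_k$), the powers $\chi_{\nat}^{p}$ converge pointwise to $\chi_{\nat}^{\infty}$ as $p\to\infty$; since positive-definiteness is preserved under pointwise limits, $\chi_{\nat}^{\infty}$ is positive-definite as well. This shows that $\exchar\left(\mathfrak{S}_{\widehat{\mathbf{n}}}\right)$ is contained in the set of characters on $\mathfrak{S}_{\widehat{\mathbf{n}}}$.

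For the second assertion, I would combine the approximation theorem with the case analysis of the previous section. Let $\chi$ be an indecomposable character. By Proposition \ref{approx_thm} there is an increasing sequence $\{k(l)\}$ and partitions $\,^{k(l)}\!\lambda\vdash N_{k(l)}$ with $\lim_{l\to\infty}\chi_{\,^{k(l)}\!\lambda}(g)=\chi(g)$ for all $g\in\mathfrak{S}_{\widehat{\mathbf{n}}}$. Regarding $\{\,^{k(l)}\!\lambda\}_{l}$ as a sequence of partitions, Lemma \ref{mu_cases} shows it belongs to at least one of the families $C_\infty$, $\widehat{C}_\infty$, $C_\mu$, $\widehat{C}_\mu$. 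Passing to the corresponding subsequence, along which convergence to $\chi$ persists, exactly one of Lemma \ref{inf_mu}, Lemma \ref{fin_mu}, or Corollary \ref{transpose_fin_mu} applies and identifies $\chi$ as $\chi_{\nat}^{\infty}$, as $\chi_{\nat}^{|\mu|}$, or as $\sgn_\infty\cdot\chi_{\nat}^{|\mu|}$, respectively. In every case $\chi\in\exchar\left(\mathfrak{S}_{\widehat{\mathbf{n}}}\right)$, which is the required inclusion.

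I expect no serious obstacle here, since the analytic work has already been carried out in Proposition \ref{approx_thm} and Lemmas \ref{inf_mu}, \ref{fin_mu}. The one point demanding care is bookkeeping: the classification lemmas are phrased for a sequence $\{\,^k\!\lambda\vdash N_k\}$ indexed over all $k$, whereas Proposition \ref{approx_thm} supplies partitions only along a subsequence $\{k(l)\}$. I would resolve this by relabeling and observing that the proofs of those lemmas use only convergence to $\chi$ together with the divergence (respectively stabilization) structure along the chosen subsequence, so they apply verbatim. The positive-definiteness step in the first part is the only place where a genuine, though entirely standard, argument—the Schur product theorem and the stability of positive-definiteness under pointwise limits—is needed.
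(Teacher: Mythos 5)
Your proof is correct, and your second half (indecomposable $\Rightarrow$ belongs to $\exchar$) is exactly the paper's argument: Proposition \ref{approx_thm} followed by the trichotomy of Lemma \ref{mu_cases} and the identifications in Lemma \ref{inf_mu}, Lemma \ref{fin_mu} and Corollary \ref{transpose_fin_mu} (note only that Lemma \ref{mu_cases} guarantees \emph{at least} one case applies, not exactly one — harmless, since the conclusions are forced to agree by convergence of the full sequence). Where you genuinely diverge is the positive-definiteness of the listed functions. The paper works representation-theoretically: starting from $\chi_{\nat}(\sigma)=(\mathcal{F}(\sigma)\xi_0,\xi_0)$ as in \eqref{natural_character}, it realizes $\chi_{\nat}^{p}$ as the vector state of the tensor power $\mathcal{F}^{\otimes p}$ on $\mathcal{H}^{\otimes p}$ with cyclic vector $\xi_0^{\otimes p}$, and $\sgn_\infty\cdot\chi_{\nat}^{p}$ via the twisted representation $\sgn_\infty\cdot\mathcal{F}^{\otimes p}$, while $\chi_{\nat}^{\infty}$ is simply recognized as the regular character (positive-definite on any group, and indecomposable on an ICC group, which the paper exploits later in Lemma \ref{irr_lemma}). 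You instead stay at the level of matrices: Hadamard powers via the Schur product theorem for $\chi_{\nat}^{p}$, products of positive-definite functions for the $\sgn_\infty$-twist, and a pointwise-limit argument for $\chi_{\nat}^{\infty}$, using $0\le\chi_{\nat}(\sigma)<1$ for $\sigma\neq\id$. Both are valid; the Schur-product route is more elementary and self-contained, whereas the paper's tensor-power construction buys a concrete unitary realization of each character, which is in the spirit of Section 2 (where explicit GNS-type realizations underpin the factoriality and irreducibility statements). Your explicit handling of the subsequence bookkeeping (the classification lemmas being stated for full sequences $\{\,^k\!\lambda\vdash N_k\}$ while Proposition \ref{approx_thm} yields partitions only along $\{k(l)\}$) is sound and is exactly what the paper does implicitly, e.g.\ the reduction ``$k_i=i$'' inside the proof of Lemma \ref{inf_mu}.
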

	\begin{proof}
		It is clear that $\chi_{\nat}^{\infty}=\sgn_{\infty}\cdot\chi_{\nat}^{\infty}$ is a character on $\mathfrak{S}_{\widehat{\mathbf{n}}}$. It remains to check that $\chi_{\nat}^p$ and $\sgn_{\infty}\cdot\chi_{\nat}^p$ are characters on the group $\mathfrak{S}_{\widehat{\mathbf{n}}}$.
		
		Recall that $\mathcal{F}$ is a unitary representation of the group $\mathfrak{S}_{\widehat{\mathbf{n}}}$ which acts in the Hilbert space $\mathcal{H}=L^2\left( \mathbb{X}_{\widehat{\mathbf{n}}},\nu_{\widehat{\mathbf{n}}} \right)\otimes l^2(\mathbb{Z})$ (see Subsection \ref{factor_repr_constr}). Moreover, this representation satisfies the following property: for $\xi_0=\gimel\otimes\delta_0\in\mathcal{H}$ the equality
		\begin{equation*}
			\chi_{\nat}(\sigma)=(\mathcal{F}(\sigma)\xi_0,\xi_0)_{\mathcal{H}}      \;\; \text( see\ \  (\ref{natural_character}))
		\end{equation*}
		holds for all $\sigma\in\mathfrak{S}_{\widehat{\mathbf{n}}}$. Thus, $\chi_{\nat}$ is a character on $\mathfrak{S}_{\widehat{\mathbf{n}}}$.
		
		Now for any $p\in\mathbb{N}\cup\{0\}$  consider the unitary representation $\mathcal{F}^{\otimes p}$ acting on $\mathcal{H}^{\otimes p}$. It is clear that
		\begin{equation*}
			\chi_{\nat}^{p}(\sigma)=(\mathcal{F}^{\otimes p}(\sigma)\xi_0^{\otimes p},\xi_0^{\otimes p})_{\mathcal{H}^{\otimes p}}~\text{for all}~\sigma\in\mathfrak{S}_{\widehat{\mathbf{n}}}.
		\end{equation*}
		Since $\tau(B)=(B\xi_0^{\otimes p},\xi_0^{\otimes p})_{\mathcal{H}^{\otimes p}}$ is a vector state, $\chi_{\nat}^{p}$ is a character on $\mathfrak{S}_{\widehat{\mathbf{n}}}$. Similarly, $\sgn_{\infty}\cdot\,\mathcal{F}^{\otimes p}$ is also a unitary representation acting on $\mathcal{H}^{\otimes p}$ and the same argument implies that $\sgn_{\infty}\cdot\chi_{\nat}^p$ is also a character on $\mathfrak{S}_{\widehat{\mathbf{n}}}$.

If $\chi$ is indecomposable characters then, applying Proposition \ref{approx_thm}, Lemmas \ref{mu_cases},  \ref{inf_mu}, \ref{fin_mu} and Corollary \ref{transpose_fin_mu}, we obtain that $\chi$ belongs to $\exchar\left(\mathfrak{S}_{\widehat{\mathbf{n}}} \right)$.
	\end{proof}
	
	\begin{Lm}\label{irr_lemma}
Each character from  $\exchar\left(\mathfrak{S}_{\widehat{\mathbf{n}}} \right)$ is indecomposable.
	\end{Lm}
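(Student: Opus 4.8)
The plan is to reduce the indecomposability of every $\chi\in\exchar\left(\mathfrak{S}_{\widehat{\mathbf{n}}}\right)$ to the factoriality of an associated $w^*$-algebra, using the criterion established above that $\pi_\chi$ is a factor representation (equivalently, $\dim E\mathcal{H}_\chi=1$). First I would dispose of the $\sgn_\infty$-twist: since $\sgn_\infty$ is a homomorphism into $\{-1,1\}$, the operators $\sgn_\infty(\sigma)\pi_\chi(\sigma)$ and $\pi_\chi(\sigma)$ generate the same $w^*$-algebra, so $\sgn_\infty\cdot\chi$ is indecomposable whenever $\chi$ is; it therefore suffices to treat $\chi_{\nat}^p$ for $p\in\mathbb{N}\cup\{0,\infty\}$. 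The case $p=0$ is the constant character $1$, whose GNS representation is one-dimensional and hence factorial. For $p=\infty$ the function $\chi_{\nat}^\infty$ is the canonical trace, so $\pi_{\chi_{\nat}^\infty}$ is the left regular representation and generates $L\left(\mathfrak{S}_{\widehat{\mathbf{n}}}\right)$, which is a ${\rm II}_1$-factor as soon as $\mathfrak{S}_{\widehat{\mathbf{n}}}$ has infinite conjugacy classes (ICC). I would verify the ICC property directly: for $g\neq\id$ with $g\in\mathfrak{S}_{N_k}$, the cycle-type evolution \eqref{cycle_type_evolution} shows that the cardinality of the conjugacy class of $g$ in $\mathfrak{S}_{N_j}$ grows without bound as $j\to\infty$, so every nontrivial conjugacy class of $\mathfrak{S}_{\widehat{\mathbf{n}}}$ is infinite.

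The essential case is $1\le p<\infty$. Using the representation $\mathcal{F}^{\otimes p}$ and the identity $\chi_{\nat}^p(\sigma)=\left(\mathcal{F}^{\otimes p}(\sigma)\xi_0^{\otimes p},\xi_0^{\otimes p}\right)$ from the proof of Lemma \ref{char_check}, I would realize $\pi_{\chi_{\nat}^p}$ on $L^2\!\left(N_p,\tau_p\right)$, where $N_p=\mathcal{F}^{\otimes p}\!\left(\mathfrak{S}_{\widehat{\mathbf{n}}}\right)^{\prime\prime}$ and $\tau_p(\cdot)=\left(\cdot\,\xi_0^{\otimes p},\xi_0^{\otimes p}\right)$ is a trace for which $\xi_0^{\otimes p}$ is cyclic. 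Thus $\pi_{\chi_{\nat}^p}\!\left(\mathfrak{S}_{\widehat{\mathbf{n}}}\right)^{\prime\prime}\cong N_p$, and by the equivalence of conditions {\rm(i)} and {\rm(iii)} recorded after Proposition \ref{center=commutant}, indecomposability of $\chi_{\nat}^p$ is equivalent to $N_p$ being a factor, i.e.\ to the subspace of vectors fixed by the diagonal conjugations $\operatorname{Ad}\mathcal{F}^{\otimes p}(\sigma)$ being reduced to $\mathbb{C}\,\xi_0^{\otimes p}$.

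To analyze $N_p$ I would follow the template of Proposition \ref{factor}. Repeating the weak-limit argument of Lemma \ref{mult_operator} for $\mathcal{F}\left(\,^k\!D_y\right)^{\otimes p}=\mathcal{F}^{\otimes p}\!\left(\,^k\!D_y\right)$ shows that $N_p$ contains the ``cube'' multiplication operators $\mathfrak{M}\!\left(\mathfrak{I}_{\,^j\!\!\mathbb{A}_y}\right)^{\otimes p}$, while $O\in\overline{\mathfrak{S}}_{\widehat{\mathbf{n}}}$ supplies the unitaries $\mathcal{F}(O)^{\otimes p}$ and, by \eqref{fundamental_repr} and \eqref{full_GR}, each $\mathcal{F}^{\otimes p}(\beta)$ with $\beta\in\overline{\mathfrak{S}}_{\widehat{\mathbf{n}}}$ acts in the $\ell^2(\mathbb{Z})^{\otimes p}$-factors by the \emph{independent} shifts $\bigl(d(\beta,x^{(1)}),\dots,d(\beta,x^{(p)})\bigr)$. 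Expanding $A\,\xi_0^{\otimes p}$ for $A$ in the relative commutant as a series in these shift unitaries with $L^\infty\!\left(\mathbb{X}_{\widehat{\mathbf{n}}}^{\,p}\right)$-coefficients, exactly as in Proposition \ref{factor}, reduces the triviality of the center of $N_p$ to an ergodicity-plus-freeness statement for the diagonal action of $\overline{\mathfrak{S}}_{\widehat{\mathbf{n}}}$ on $\left(\mathbb{X}_{\widehat{\mathbf{n}}}^{\,p},\nu_{\widehat{\mathbf{n}}}^{\,p}\right)$.

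The main obstacle is precisely this ergodicity statement, the $p$-tuple analogue of Lemma \ref{free_action}, which is genuinely more delicate than the case $p=1$. The subtlety is that the diagonal odometer $O\times\cdots\times O$ is \emph{not} ergodic on $\mathbb{X}_{\widehat{\mathbf{n}}}^{\,p}$: having pure point spectrum, it leaves the nonconstant function $f\!\left(x^{(1)}\right)\overline{f\!\left(x^{(2)}\right)}$ invariant for any eigenfunction $f$. Ergodicity must instead be extracted from the full group $\overline{\mathfrak{S}}_{\widehat{\mathbf{n}}}=[O]$, which by \eqref{full_GR} realizes independent powers $O^{d(\beta,x^{(i)})}$ in distinct coordinates. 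Concretely, at each finite level $\mathfrak{S}_{N_k}$ acts transitively on the $p$-tuples of $\mathbb{X}_{N_k}^{\,p}$ sharing a fixed coincidence pattern, and since $\nu_{\widehat{\mathbf{n}}}$ is non-atomic the locus where two coordinates coincide is $\nu_{\widehat{\mathbf{n}}}^{\,p}$-null; hence $\nu_{\widehat{\mathbf{n}}}^{\,p}$-almost every tuple has pairwise distinct coordinates and any invariant measurable function is constant almost everywhere. Combining this ergodicity with the essential freeness inherited from Lemma \ref{free_action} {\rm d)}, I would conclude, as in Proposition \ref{factor}, that $N_p$ has trivial center and is therefore a factor, which yields the indecomposability of $\chi_{\nat}^p$ and, by the twist reduction of the first paragraph, of $\sgn_\infty\cdot\chi_{\nat}^p$ as well.
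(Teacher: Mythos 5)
Your easy reductions are sound and agree with the paper: the twist by $\sgn_\infty$ leaves the generated $w^*$-algebra unchanged, $p=0$ is trivial, and for $p=\infty$ the regular character of the ICC group $\mathfrak{S}_{\widehat{\mathbf{n}}}$ is indecomposable (the paper invokes exactly this, via Proposition 7.9 of \cite{Takesaki_1}). The problem is the core case $1\le p<\infty$, where your plan to rerun Proposition \ref{factor} for $N_p=\mathcal{F}^{\otimes p}\left(\mathfrak{S}_{\widehat{\mathbf{n}}}\right)^{\prime\prime}$ ``exactly as in Proposition \ref{factor}'' breaks down at a specific point. That proof hinges on $M$ containing \emph{all} multiplication operators $\mathfrak{M}(f)$, $f\in L^\infty\left(\mathbb{X}_{\widehat{\mathbf{n}}},\nu_{\widehat{\mathbf{n}}}\right)$: one expands a central $A$ as $\sum_i\mathfrak{M}(f_i)\mathcal{F}(O^{-i})$ and tests it against arbitrary $\mathfrak{M}(f)$. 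For $p\ge 2$ this is provably unavailable: every generator $\mathcal{F}(\sigma)^{\otimes p}$ commutes with the unitaries permuting the tensor legs, so $N_p$ lies in the commutant of the leg-permutation action and can only contain multiplications by \emph{symmetric} functions on $\mathbb{X}_{\widehat{\mathbf{n}}}^{\,p}$; moreover your Lemma \ref{mult_operator}-style limits yield only the diagonal cubes $\mathfrak{M}\left(\mathfrak{I}_{\,^j\!\!\mathbb{A}_y}\right)^{\otimes p}$, whose generated algebra does not even exhaust the symmetric part (e.g.\ $\mathfrak{I}_{A_1}(x^{(1)})\mathfrak{I}_{A_2}(x^{(2)})+\mathfrak{I}_{A_2}(x^{(1)})\mathfrak{I}_{A_1}(x^{(2)})$ requires extra work, and the unsymmetrized cross term is not in $N_p$ at all). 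Consequently the coefficients $f_{\mathbf{i}}\in L^\infty\left(\mathbb{X}_{\widehat{\mathbf{n}}}^{\,p}\right)$ of a central element cannot be reinserted as operators of $N_p$, and the commutation tests that drive Proposition \ref{factor} cannot be performed; one would have to replace this by a genuinely different analysis (fixed vectors on the $p$-fold product groupoid, as in \cite{Dudko_MED}), which your sketch does not supply. A second, smaller gap: your ergodicity claim for the diagonal action on $\left(\mathbb{X}_{\widehat{\mathbf{n}}}^{\,p},\nu_{\widehat{\mathbf{n}}}^{\,p}\right)$ is true, but levelwise transitivity on tuples with a fixed coincidence pattern plus nullity of the coincidence locus is not a proof; you need to average over $\mathfrak{S}_{N_k}$ (the invariant function then becomes measurable with respect to the $p$-th power of the tail $\sigma$-algebra beyond level $k$) and conclude by a Kolmogorov zero--one law. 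You also implicitly equate factoriality with triviality of the diagonal-fixed vectors in all of $\mathcal{H}^{\otimes p}$, whereas the criterion {\rm(iii)} lives on the GNS cyclic subspace $\left[N_p\,\xi_0^{\otimes p}\right]$ only.

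For contrast, the paper avoids tensor-power factoriality entirely and gives two short arguments. The first exploits the classification already established (Proposition \ref{approx_thm} together with Lemmas \ref{inf_mu}, \ref{fin_mu}): a putative convex decomposition of $\chi_{\nat}^{m}$ over $\exchar$ is evaluated along $\sigma_n$ with $\chi_{\nat}(\sigma_n)\to 0$ and then on the set $\left\{\chi_{\nat}(\sigma)\right\}_{\sigma\in\mathfrak{A}_{\widehat{\mathbf{n}}}}$, which is dense in $[0,1)$; linear independence of the power functions $\gamma\mapsto\gamma^{j}$ forces all coefficients to vanish except possibly $\alpha_\infty$, and indecomposability of $\chi_{\nat}^{\infty}$ (ICC) finishes it. The second is an asymptotic-multiplicativity argument: using the homomorphisms $\mathfrak{I}_n$ and conjugators $\Theta_n$ from Corollary \ref{densety_orbit}, the identity $\chi_{\nat}^{m}\left(s_i\,\mathfrak{I}_n(s_j)\right)=\chi_{\nat}^{m}(s_i)\,\chi_{\nat}^{m}\left(\mathfrak{I}_n(s_j)\right)$ contradicts the existence of a nontrivial central projection, via the estimate $0\ge\delta(1-\delta)-3\epsilon$. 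Your route (essentially that of \cite{Dudko_MED}) is viable in principle and would give an independent, more structural proof, but as written the factoriality of $N_p$ for $2\le p<\infty$ is not established.
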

	
	\begin{proof}
It is clear that character $\chi_p^-=\sgn_{\infty}\cdot\chi_{\nat}^{p}$ is indecomposable if and only if character $\chi_p^+=\chi_{\nat}^{p}$ is indecomposable.

First we recall that, by Proposition \ref{factor}, character $\chi_{\rm nat}$ is indecomposable.

 {\bf First proof.} We suppose  the opposite; i.~e.   some character $\chi^+_m$ is not extreme point in the set of all normalized characters. Then there  exist  the numbers $\alpha_p^+,\alpha_p^-\in [0,1]$ with the property
\begin{eqnarray}\label{decomposition_1}
\begin{split}
\chi^+_m(\sigma)=\alpha_0^++\alpha_0^-\sgn_{\infty}(\sigma)+\sum\limits_{j=1}^{m-1}\alpha_j^+\chi_j^+(\sigma)+
\sum\limits_{j=1}^{m-1}\alpha_j^-\chi_j^-(\sigma)\\
+\sum\limits_{j=m+1}^{\infty}\alpha_j^+\chi_j^+(\sigma)+
\sum\limits_{j=m+1}^{\infty}\alpha_j^-\chi_j^-(\sigma)+ \alpha_\infty\chi_\infty(\sigma) ~ \text{ for all }~ \sigma\in \mathfrak{S}_{\widehat{\mathbf{n}}}.
\end{split}
\end{eqnarray}
Take the sequence $\left\{\sigma_n\right\}\subset\mathfrak{A}_{\widehat{\mathbf{n}}}$, satisfying the following conditions:
\begin{eqnarray*}
\sigma_n\neq {\rm id} ~\text{ for all }~ n ~\text{and}~ \lim\limits_{n\to\infty}\chi_{\rm nat}(\sigma_n)=0.
\end{eqnarray*}
Substituting $\sigma_n$ instead $\sigma$ into (\ref{decomposition_1}) and passing to the limit $n\to\infty$, we obtain
\begin{eqnarray}\label{Zero_nul}
\alpha_0^++\alpha_0^-=0 \Rightarrow \alpha_0^+=0 ~\text{ and }~ \alpha_0^-=0.
\end{eqnarray}
Put $\mathfrak{A}_{\widehat{\mathbf{n}}}=\left\{\sigma\in\mathfrak{S}_n:\sgn_{\infty}(\sigma)=1 \right\}$.
Since a set $\left\{\chi_{\rm nat}(\sigma)\right\}_{\sigma\in\mathfrak{A}_{\widehat{\mathbf{n}}}}$ is dense in $[0,1)$, it follows from (\ref{decomposition_1})  that
\begin{eqnarray*}
\gamma^m=\sum\limits_{j=1}^{m-1}\left(\alpha_j^++\alpha_j^-\right)\gamma^j+
\sum\limits_{j=m+1}^{\infty}\left(\alpha_j^++\alpha_j^-\right)\gamma^j ~ \text{ for all } \gamma\in [0,1).
\end{eqnarray*}
An easy computation shows that
\begin{eqnarray*}
\alpha_j^+=\alpha_j^-=0 ~ \text{ for all  naturale }~ j\neq m.
\end{eqnarray*}
Hence, using (\ref{decomposition_1}) and (\ref{Zero_nul}), we obtain that
\begin{eqnarray*}
\chi^+_m(\sigma)=\alpha_\infty\chi_\infty(\sigma) ~ \text{ for all }~ \sigma\in \mathfrak{S}_{\widehat{\mathbf{n}}}.
\end{eqnarray*}
Therefore, $\chi^+_m=\chi_\infty$. Since character $\chi_\infty$ is indecomposable as a regular cha\-rac\-ter of the ICC group (Proposition 7.9 \cite{Takesaki_1}), this contradicts the assumption that  $\chi^+_m$ is decomposable character. \qed

{\bf Second proof.} By above we can to suppose that $m<\infty$. If a character $\chi^+_m$ is not indecomposable then the corresponding GNS-representation $(\pi,\xi,\mathcal{H})$, where $\xi$ is unit cyclic vector such that $\chi^+_m(g)=\left< \pi(g)\xi,\xi \right>_\mathcal{H}$ for all $g\in\mathfrak{S}_{\widehat{\mathbf{n}}}$, is not factor-representation. Therefore, there are nonzero orthogonal projection $E$ in the center $C(M)$ of $w^*$-algebra $M$, generated by operators $\left\{\pi(s)\right\}_{s\in\mathfrak{S}_{\widehat{\mathbf{n}}}}$, and $\delta\in(0,1)$ such that
\begin{eqnarray}
\|E\xi\|^2=\delta>0.
\end{eqnarray}
Since $\xi$ is cyclic vector, there exist finite subset  $\left\{s_i\right\}_{i=1}^{K_p}\subset\mathfrak{S}_{N_p}$ and a collection  $\left\{ \vartheta_i \right\}$ of the complex numbers with the property
\begin{eqnarray}\label{E_approximation}
\left\|E\xi-\sum\limits_{i=1}^{K_p}\vartheta_i\pi(s_i)\xi\right\|<\epsilon.
\end{eqnarray}
Let $q_n, r_n$, $\Theta_n$, $\mathfrak{I}_n$, $\,^{r_n}_{q_n}\!\,\!\mathfrak{S}_{\widehat{\mathbf{n}}}$ be the objects are the same as in Corollary \ref{densety_orbit}. It follows from (\ref{chi_nat}) that
 \begin{eqnarray}\label{multi}
\chi^+_m\left(s_i\mathfrak{I}_n(s_j)\right) =\chi^+_m\left(s_i\right)\cdot\chi^+_m\left(\mathfrak{I}_n(s_j)\right)~ \text{ for all }~ i,j.
 \end{eqnarray}
 Since $\chi^+_m$ is continuous under the topology, defined on  $\mathfrak{S}_{\widehat{\mathbf{n}}}$ by metric $\rho$, we have from (\ref{E_approximation})
\begin{eqnarray}\label{I_E_approx}
\left\|(I-E)\xi-\left(I-\sum\limits_{i=1}^{K_p}\vartheta_i\pi(\mathfrak{I}_n(s_i))\right)\xi\right\|<\epsilon ~\text{ for all  } ~ n> K.
\end{eqnarray}
For simplicity of the notations we put $A=\sum\limits_{i=1}^{K_p}\vartheta_i\pi(s_i)$ and $A_n=I-\sum\limits_{i=1}^{K_p}\vartheta_i\pi(\mathfrak{I}_n(s_i))$.
Without loss of generality we suppose that $A=A^*$ and $A_n=A_n^*$. Now we obtain the following chain of inequalities
\begin{eqnarray*}
\left<E\xi,(I-E)\xi\right>\stackrel{(\ref{E_approximation})}{\geq} \left<A\xi,(I-E)\xi\right>-\epsilon\|(I-E)\xi\|\\
\stackrel{(\ref{I_E_approx})}{\geq}\left<A\xi,A_n\xi\right>-\epsilon\|A\xi\|-\epsilon\|(I-E)\xi\|\\
\geq\left<A\xi,A_n\xi\right>-\epsilon(\|E\xi\|+\epsilon)-\epsilon\|(I-E)\xi\|\geq \left<A\xi,A_n\xi\right>-2\epsilon-\epsilon^2.
  \end{eqnarray*}
  Hence, applying (\ref{multi}), we have
  \begin{eqnarray*}
0=\left<E\xi,(I-E)\xi\right>\geq \left<A\xi,\xi\right>\left<A_n\xi,\xi\right>-2\epsilon-\epsilon^2\\
\geq\left(\left<E\xi,\xi \right>-\epsilon \right)\left(\left<(I-E)\xi,\xi \right>-\epsilon \right)-2\epsilon-\epsilon^2\geq\delta(I-\delta)-3\epsilon.
  \end{eqnarray*}
  This inequality is false for  $\epsilon< \frac{\delta-\delta^2}{3}$.
	\end{proof}

	\section{The proof of Theorem \ref{main_theorem}}
	
	In this section we prove the main result of the present paper.
	
	\begin{proof}[Proof of Theorem \ref{main_theorem}]
		Suppose that $\chi$ is an indecomposable character on $\mathfrak{S}_{\widehat{\mathbf{n}}}$. According to Proposition \ref{approx_thm}, there exist a subsequence $\{k(l)\}_{l=1}^{\infty}$ and a sequence $\{\,^{k(l)}\!\lambda\vdash N_{k(l)}\}_{l=1}^{\infty}$ of partitions such that
		\begin{equation}
			\chi(g)=\lim_{l\to\infty}\chi_{_{^{k(l)}\!{\scriptstyle\lambda}}}(g)~\text{for any}~g\in\mathfrak{S}_{\widehat{\mathbf{n}}}.
		\end{equation}
		Lemma \ref{mu_cases} implies that three cases are possible:
		\begin{itemize}
			\item
			The sequence $\{\,^{k(l)}\!\lambda\vdash N_{k(l)}\}_{l=1}^{\infty}$ belongs to the union $C_\infty\cup \widehat{C}_\infty$. In this case Lemma \ref{inf_mu} implies that $\chi=\chi_{\nat}^{\infty}$.
			
			\item
			The sequence $\{\,^{k(l)}\!\lambda\vdash N_{k(l)}\}_{l=1}^{\infty}$ belongs to $C_{\mu}$ for some partition $\mu$. In this case Lemma \ref{fin_mu} implies that $\chi=\chi_{\nat}^{|\mu|}$.
			
			\item
			The sequence $\{\,^{k(l)}\!\lambda\vdash N_{k(l)}\}_{l=1}^{\infty}$ belongs to $\widehat{C}_{\mu}$ for some partition $\mu$. In this case Corollary \ref{cor_fin_mu} implies that $\chi=\sgn_\infty\cdot\chi_{\nat}^{|\mu|}$.
		\end{itemize}
		Thus, the character $\chi$ equals either $\chi_{\nat}^{p}$, or $\sgn_\infty\cdot\chi_{\nat}^{p}$ for some $p\in\mathbb{N}\cup\{0,\infty\}$, i.e. $\chi\in\exchar$ (see \eqref{ex_char}). In other words, we proved that the set of all indecomposable characters on $\mathfrak{S}_{\widehat{\mathbf{n}}}$ is a subset of $\exchar$. Finally, Lemma \ref{irr_lemma} implies that all these functions are indeed indecomposable characters on $\mathfrak{S}_{\widehat{\mathbf{n}}}$.
	\end{proof}
	
	\section{The proof of Theorem \ref{isomorphclass}}
	
	In this section we prove Theorem \ref{isomorphclass} about the isomorphism classes of groups $\mathfrak{S}_{\widehat{\mathbf{n}}}$.
	
	We need the following simple lemmas in the proof of Theorem \ref{isomorphclass}.
	
	\begin{Lm}\label{equiv_cond_lemma}
		Let $\widehat{\mathbf{n}}'=\{n_k'\}_{k=1}^{\infty}$ and $\widehat{\mathbf{n}}''=\{n_k''\}_{k=1}^{\infty}$, where $n_k',n_k''>1$ for all $k$, be the sequences of positive integers. Put $N_k'=\prod_{i=1}^{k}n_k'$ and $N_k''=\prod_{i=1}^{k}n_k''$. Denote
		\begin{eqnarray}
			\Div(\widehat{\mathbf{n}}')=\{N\in\mathbb{N}:N~\text{divides}~N_k'~\text{for some}~k\},
			\\
			\Div(\widehat{\mathbf{n}}'')=\{N\in\mathbb{N}:N~\text{divides}~N_k''~\text{for some}~k\}
		\end{eqnarray}
		Then, the following conditions are equivalent:
		\begin{enumerate}
			\item[(a)]
			for each prime number $p$ the following condition holds:
			\begin{equation}
				\lim_{k\to\infty}\deg_p(N_k')=\lim_{k\to\infty}\deg_p(N_k'');
			\end{equation}
			
			\item[(b)]
			for any element $N_i'$ of the sequence $\{N_k'\}_{k=1}^{\infty}$ there is an element $N_j''$ of the sequence $\{N_k''\}_{k=1}^{\infty}$ such that $N_i'$ divides $N_j''$ and vice versa.
			
			\item[(c)]
			$\Div(\widehat{\mathbf{n}}')=\Div(\widehat{\mathbf{n}}'')$.
			
		\end{enumerate}	
	\end{Lm}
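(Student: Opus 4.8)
The plan is to prove the equivalence of (a), (b), (c) by establishing the cycle of implications $(a)\Rightarrow(b)\Rightarrow(c)\Rightarrow(a)$, or more efficiently by showing each condition is an unpacking of a single underlying fact about the multisets of prime exponents. The cleanest route, in my view, is to translate everything into statements about the functions $k\mapsto\deg_p(N_k')$ and $k\mapsto\deg_p(N_k'')$, which are nondecreasing sequences of nonnegative integers (since $N_k'\mid N_{k+1}'$), and to recall that $N\mid M$ iff $\deg_p(N)\le\deg_p(M)$ for every prime $p$.

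\emph{First} I would prove $(c)\Rightarrow(b)$, which is essentially a restatement: if $\Div(\widehat{\mathbf{n}}')=\Div(\widehat{\mathbf{n}}'')$, then each $N_i'$ lies in $\Div(\widehat{\mathbf{n}}')=\Div(\widehat{\mathbf{n}}'')$, so by definition of $\Div(\widehat{\mathbf{n}}'')$ there is some $k$ with $N_i'\mid N_k''$, and symmetrically. \emph{Next}, for $(b)\Rightarrow(c)$, note that (b) says every $N_i'$ divides some $N_j''$ and conversely; since $\Div(\widehat{\mathbf{n}}')$ consists exactly of the divisors of the various $N_k'$, any $N\in\Div(\widehat{\mathbf{n}}')$ satisfies $N\mid N_i'\mid N_j''$ for suitable $i,j$, giving $N\in\Div(\widehat{\mathbf{n}}'')$, and symmetrically, so the two divisor sets coincide.

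\emph{The analytic heart} of the lemma is the equivalence of (a) with (b) (or (c)), and here I would argue prime by prime. For the direction $(a)\Rightarrow(b)$, fix $N_i'$ and a prime $p$ dividing it; the hypothesis $\lim_k\deg_p(N_k')=\lim_k\deg_p(N_k'')$ (as elements of $\mathbb{N}\cup\{\infty\}$) together with monotonicity guarantees that for each of the finitely many primes $p$ dividing $N_i'$ one can find an index $j_p$ with $\deg_p(N_{j_p}'')\ge\deg_p(N_i')$; taking $j=\max_p j_p$ and using monotonicity of $\deg_p(N_k'')$ in $k$ yields a single $N_j''$ with $\deg_p(N_j'')\ge\deg_p(N_i')$ for all $p$, i.e. $N_i'\mid N_j''$. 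The converse divisibility is symmetric. For $(b)\Rightarrow(a)$, fix a prime $p$: given any $i$, choose $j$ with $N_i'\mid N_j''$, whence $\deg_p(N_i')\le\deg_p(N_j'')\le\lim_k\deg_p(N_k'')$; taking the limit over $i$ gives $\lim_k\deg_p(N_k')\le\lim_k\deg_p(N_k'')$, and the reverse inequality follows by the symmetric use of (b).

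\emph{The main subtlety} to handle carefully is the passage to limits in $\mathbb{N}\cup\{\infty\}$: the sequences $\{\deg_p(N_k')\}$ are nondecreasing, so the limits always exist in $\mathbb{N}\cup\{\infty\}$, and one must treat the unbounded case ($\lim=\infty$) uniformly with the eventually-constant case. In the implication $(a)\Rightarrow(b)$ the only delicate point is ensuring a \emph{single} index $j$ works for all primes dividing $N_i'$ simultaneously; this is exactly where the finiteness of the prime support of $N_i'$ together with monotonicity of each $\deg_p(N_k'')$ in $k$ is used to take the maximum of finitely many indices. No deep machinery is required — the argument is elementary number theory — so I do not anticipate a genuine obstacle, only the bookkeeping of the $\infty$ case and the ``single $j$'' step.
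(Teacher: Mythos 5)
Your proof is correct, and it differs from the paper's in the one place where the difference matters. The paper uses the same cycle of implications but declares $(a)\Rightarrow(b)$ and $(b)\Rightarrow(c)$ trivial, proves $(c)\Rightarrow(b)$ by the same short contradiction you give, and handles the analytic heart $(b)\Rightarrow(a)$ by \emph{contraposition}: assuming $\lim_k\deg_p(N_k')\neq\lim_k\deg_p(N_k'')$ for some prime $p$, it splits into cases (both limits finite, one finite and one infinite) and carries out explicit arithmetic with $p$-adic valuations and remainders --- writing $N_k''=L_k''p^s+\mathit{rem}_k''$ and so on --- to exhibit an $N_j'$ dividing no $N_k''$. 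Your direct argument replaces all of that casework with a single monotone-limit comparison: from $N_i'\mid N_j''$ you get $\deg_p(N_i')\le\deg_p(N_j'')\le\lim_k\deg_p(N_k'')$, and taking the supremum over $i$ (valid uniformly in $\mathbb{N}\cup\{\infty\}$ since the valuation sequences are nondecreasing) gives the inequality in one line, with equality by symmetry. This is cleaner than the paper's treatment and avoids its somewhat opaque remainder computations; it also makes the $\infty$ case genuinely uniform rather than a separate branch. You additionally spell out $(a)\Rightarrow(b)$, and you correctly identify its one delicate point --- obtaining a \emph{single} index $j$ valid for all primes dividing $N_i'$ --- which you resolve by finiteness of the prime support together with monotonicity of $k\mapsto\deg_p(N_k'')$; the paper simply asserts this implication. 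One small point worth making explicit when you write it up: for a finite limit $L=\lim_k\deg_p(N_k'')\ge\deg_p(N_i')$, the existence of $j_p$ with $\deg_p(N_{j_p}'')\ge\deg_p(N_i')$ uses that an integer-valued nondecreasing bounded sequence eventually attains its limit, but this is immediate.
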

	\begin{proof}
		The implications $(a)\Rightarrow (b)$ and $(b)\Rightarrow (c)$ are trivial.
		It remain to prove the implications $(b)\Rightarrow (a)$ and $(c)\Rightarrow (b)$.

{\bf The case  $(b)\Rightarrow (a)$}.

Let the condition $(a)$ does not holds for some prime $p$.

First we consider the case when $\lim\limits_{k\to\infty}\deg_p(N_k')<\infty$ and $\lim\limits_{k\to\infty}\deg_p(N_k'')<\infty$.
For the sake of definiteness, we will   assume that  there exist a positive integers $s$ and $D_p$  with the properties:
\begin{itemize}
  \item {\bf i)}  $\frac{N_k'}{p^{s}}=pL_k'+{\it r\!e\!m}_k'$ for all $k>D_p$, where ${\it r\!e\!m}_k'\in\mathds{N}$, ${\it r\!e\!m}_k'<p$ and $L_k'$  is nonnegative integer; i.~e. $\lim\limits_{k\to\infty}\deg_p(N_k')=s$;
  \item {\bf ii)} $p^{s+1}\big|N_k''$ for all $k>D_p$ or $N_k''=L_k''\,p^s+{\it r\!e\!m}_k''$ for all $k$, where $1\leq{\it r\!e\!m}_k''<p^s$ and $L_k''$ is nonnegative integer; i.~e. $\lim\limits_{k\to\infty}\deg_p(N_k'')\neq s$.
\end{itemize}
Hence, if $p^{s+1}\big|N_k''$ then, by property  {\bf i)}, $\frac{N_j'}{N_k''}=\frac{A_j'}{p\,A_k''}$, where $A_j'=\frac{N_j'}{p^s}\in\mathds{N}$, $A_k''=\frac{N_k''}{p^s}\in\mathds{N}$, $p$ $\nmid A_j'$ and $k>D_p$. Here the notation $l\nmid j$ means that `1Therefore, $N_k''$ not divides $N_j'$ for all  $j$.

Now we take $N_j'$ with $j>D_p$ and assume that holds second part  from property {\bf ii)}; i~e. $N_k''=L_k''\,p^s+{\it r\!e\!m}_k'' $  for all $k$. Then $\frac{N_k''}{N_j'}=\frac{L''_k\,+\frac{{\it r\!e\!m}_k''}{p^s}}{A_j'}$. Hence, since $1\leq{\it r\!e\!m}_k''<p^s$, then $N_j'$ not divides $N_k''$ for all $k$.

Now we assume that $\lim\limits_{k\to\infty}\deg_p(N_k')=\infty$ and $\lim\limits_{k\to\infty}\deg_p(N_k'')=s<\infty$.

Then there exists natural $D_p'$  such that  for all $k,j>D_p'$
 \begin{eqnarray}
 N_j'=p^{s+1}A_j',\; \frac{N_k''}{p^s}=L''_k\,p+{\it r\!e\!m}_k'', ~\text{where}~ A_j',L''_k,{\it r\!e\!m}_k''\in\mathds{N} ~ \text{and} ~ {\it r\!e\!m}_k''<p.\;\;\;
 \end{eqnarray}
 Therefore, if $j>D_p'$ then $N_j'$ not divides $N_k''$ for all $k$. This proves the implication $(b)\Rightarrow (a)$.

 At last we will prove that $(c)\Rightarrow (b)$. Suppose that there exists $j$ with the property: ${N_j'}\nmid {N_k''}$ for all $k$; ~i.~e. $N_j'\notin \Div(\widehat{\mathbf{n}}'')$. Since $N_j'\in\Div(\widehat{\mathbf{n}}')$, this contradicts the condition $(c)$.
	\end{proof}
	The following statement is the direct consequence of  Theorem \ref{main_theorem}.
	\begin{Lm}\label{max_char}
Let $\chi^0$ be a trivial character on $\mathfrak{S}_{\widehat{\mathbf{n}}} $. Put $CH(\mathfrak{S}_{\widehat{\mathbf{n}}} )=\exchar\left(\mathfrak{S}_{\widehat{\mathbf{n}}} \right)\setminus \{\chi^0,\, \sgn_{\infty}\}$.
		For any element $\sigma\in\mathfrak{S}_{\widehat{\mathbf{n}}}\setminus\{\id\}$ we have
$\chi_{\nat}(\sigma)=\max\limits_{\chi\in CH(\mathfrak{S}_{\widehat{\mathbf{n}}} ) }\left\{|\chi(\sigma)| \right\}$.
	\end{Lm}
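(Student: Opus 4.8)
The plan is to exploit the explicit description of $\exchar\left(\mathfrak{S}_{\widehat{\mathbf{n}}}\right)$ given in \eqref{ex_char} and reduce the claim to an elementary comparison of powers of the single real number $\chi_{\nat}(\sigma)$. First I would identify the members of $CH(\mathfrak{S}_{\widehat{\mathbf{n}}})$. Since $\chi^0=\chi_{\nat}^{0}$ and $\sgn_{\infty}=\sgn_{\infty}\cdot\chi_{\nat}^{0}$ are precisely the two functions corresponding to the exponent $p=0$, removing them from $\exchar$ leaves exactly the functions $\chi_{\nat}^{p}$ and $\sgn_{\infty}\cdot\chi_{\nat}^{p}$ with $p\in\mathbb{N}$ (i.e.\ $p\ge 1$), together with $\chi_{\nat}^{\infty}$.

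Next I would establish the two-sided bound $0\le\chi_{\nat}(\sigma)<1$ for every $\sigma\ne\id$. By \eqref{chi_nat}, if $\sigma\in\mathfrak{S}_{N_k}$ then $\chi_{\nat}(\sigma)=1-\#\supp_{N_k}\,\sigma/N_k$; since $0\le\#\supp_{N_k}\,\sigma\le N_k$ this already gives $\chi_{\nat}(\sigma)\in[0,1]$, and the hypothesis $\sigma\ne\id$ forces $\#\supp_{N_k}\,\sigma\ge 2$, whence the strict inequality $\chi_{\nat}(\sigma)<1$. This strictness is the only place where the assumption $\sigma\ne\id$ is used, and it is exactly what makes the exponent $p=1$ dominate all higher powers.

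Finally I would compute $|\chi(\sigma)|$ for each $\chi\in CH(\mathfrak{S}_{\widehat{\mathbf{n}}})$. Because $|\sgn_{\infty}(\sigma)|=1$, both $\chi_{\nat}^{p}$ and $\sgn_{\infty}\cdot\chi_{\nat}^{p}$ have absolute value $\left(\chi_{\nat}(\sigma)\right)^{p}$, while $|\chi_{\nat}^{\infty}(\sigma)|=0$. Thus the supremum over $CH(\mathfrak{S}_{\widehat{\mathbf{n}}})$ equals $\max_{p\ge 1}\left(\chi_{\nat}(\sigma)\right)^{p}$. Since $\chi_{\nat}(\sigma)\in[0,1)$, the sequence $p\mapsto\left(\chi_{\nat}(\sigma)\right)^{p}$ is non-increasing and attains its maximum at $p=1$, giving the value $\chi_{\nat}(\sigma)$; moreover this value is genuinely attained because $\chi_{\nat}=\chi_{\nat}^{1}\in CH(\mathfrak{S}_{\widehat{\mathbf{n}}})$ and $\chi_{\nat}(\sigma)\ge 0$, so that $|\chi_{\nat}(\sigma)|=\chi_{\nat}(\sigma)$. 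This yields the asserted equality. There is no genuine obstacle here: the entire content of the statement is carried by Theorem \ref{main_theorem} through the classification \eqref{ex_char}, and the only point requiring care is the strict bound $\chi_{\nat}(\sigma)<1$, which guarantees that the first power strictly beats all later ones.
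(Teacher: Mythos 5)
Your proof is correct and follows exactly the route the paper intends: Lemma \ref{max_char} is stated there as a direct consequence of Theorem \ref{main_theorem} via the explicit list \eqref{ex_char}, and your argument simply fills in the obvious details (identifying $CH(\mathfrak{S}_{\widehat{\mathbf{n}}})$ as $\{\chi_{\nat}^{p},\,\sgn_{\infty}\cdot\chi_{\nat}^{p}: p\ge 1\}\cup\{\chi_{\nat}^{\infty}\}$, noting $0\le\chi_{\nat}(\sigma)<1$ for $\sigma\ne\id$ since a non-identity permutation moves at least two points, and observing that $p\mapsto(\chi_{\nat}(\sigma))^{p}$ attains its maximum at $p=1$, with $\chi_{\nat}^{\infty}$ contributing $0$). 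Nothing further is needed.
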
	
	\begin{proof}[\rm\bf {Proof of Proposition \ref{isomorphclass}}]
		Let  $\widehat{\mathbf{n}}'=\{n_k'\}_{k=1}^{\infty}$ and $\widehat{\mathbf{n}}''=\{n_k''\}_{k=1}^{\infty}$, where $n_k',n_k''>1$ for all $k$. Suppose that there exists an isomorphism  $\alpha\colon\mathfrak{S}_{\widehat{\mathbf{n}}'}\to\mathfrak{S}_{\widehat{\mathbf{n}}''}$ . Then the map
\begin{eqnarray}\label{bijection_between_extremal_characters}
		\chi\ni\exchar\left(\mathfrak{S}_{\widehat{\mathbf{n}}}'' \right)\stackrel{\alpha^*}{\mapsto}\chi\circ\alpha\in\exchar\left(\mathfrak{S}_{\widehat{\mathbf{n}}}'\right)
\end{eqnarray}
is a bijection of a set $\exchar\left(\mathfrak{S}_{\widehat{\mathbf{n}}}'' \right)$ onto $\exchar\left(\mathfrak{S}_{\widehat{\mathbf{n}}}'\right)$. By definition $\alpha^*$, we have
\begin{eqnarray}
\alpha^*\left(\exchar^+\left(\mathfrak{S}_{\widehat{\mathbf{n}}}'' \right)\right)=\exchar^+\left(\mathfrak{S}_{\widehat{\mathbf{n}}}'\right).
\end{eqnarray}
Let $\chi_{\nat}'$ and $\chi_{\nat}''$ be the natural characters on $\mathfrak{S}_{\widehat{\mathbf{n}}}'$ and $\mathfrak{S}_{\widehat{\mathbf{n}}}''$, respectively (see (\ref{chi_nat}), (\ref{natural_character})). Using the characterization of the natural character from lemma \ref{max_char}, we obtain
\begin{eqnarray}
\alpha*\left(\chi_{\nat}'' \right)=\chi_{\nat}''\circ\alpha=\chi_{\nat}'.
\end{eqnarray}
Thus, from (\ref{chi_nat}) it follows that
\begin{eqnarray*}
\chi_{\nat}''\left(\mathfrak{S}_{\widehat{\mathbf{n}}}'' \right)=\chi_{\nat}''\left(\alpha\left(\mathfrak{S}_{\widehat{\mathbf{n}}}'\right) \right)= \chi_{\nat}'\left(\mathfrak{S}_{\widehat{\mathbf{n}}}'\right)
=\left\{\frac{L}{N_k'}:k\in\mathbb{N}, 0\le L\le N_k'\right\}\\
=\left\{\frac{p}{q}:q\in\Div(\widehat{\mathbf{n}}'),~0\le p\le q,~\gcd(p,q)=1\right\}.
\end{eqnarray*}
Hence
\begin{eqnarray*}
\left\{\frac{p}{q}:q\in\Div(\widehat{\mathbf{n}}'),~0\le p\le q,~\gcd(p,q)=1\right\}\\
=\left\{\frac{p}{q}:q\in\Div(\widehat{\mathbf{n}}''),~0\le p\le q,~\gcd(p,q)=1\right\}.
\end{eqnarray*}
Therefore, $Div(\widehat{\mathbf{n}}')=Div(\widehat{\mathbf{n}}'')$. Finally, Lemma \ref{equiv_cond_lemma} implies that the condition \eqref{isomorph_condition} holds.
	\end{proof}
\section{Appendix}
Take the natural numbers $p,q,r$ such that $1<p<q<r$.
 Let $\mathbb{X}_{\widehat{\mathbf{n}}}=\prod\limits_{k=1}^\infty \mathbb{X}_{n_k}$. For $x=(x_1,x_2,\ldots)\in\mathbb{X}_{\widehat{\mathbf{n}}}$ we set $\,^r_q\!x=(x_{q+1},x_{q+2},\ldots,x_r)\in\prod\limits_{k=q+1}^r\mathbb{X}_{n_k}$. Each element $y\in \prod\limits_{k=q+1}^r\mathbb{X}_{n_k}$ defines a cylindric set
	\begin{eqnarray}\label{cylinder}
		\,^r_q\!\mathbb{A}_y=\left\{ x\in\mathbb{X}_{\widehat{\mathbf{n}}}:\,\,^r_q\!x=y \right\}\subset\mathbb{X}_{\widehat{\mathbf{n}}}.
	\end{eqnarray}
Put $\,^{^q}\!\!N_r=\frac{N_r}{N_q}$.

Let us introduce subgroup $\,^r_q\!\mathfrak{S}_{\widehat{\mathbf{n}}}\subset \mathfrak{S}_{N_r}$ by
\begin{eqnarray*}
\,^r_q\!\mathfrak{S}_{\widehat{\mathbf{n}}}=\left\{\sigma\in\mathfrak{S}_{\widehat{\mathbf{n}}}:\left(\sigma (x) \right)_i=x_i ~\text{ for all }~ x\in \mathbb{X}_{\widehat{\mathbf{n}}}~\text{and}~i \notin\left\{q+1,q+2,\ldots,r \right\} \right\}.
\end{eqnarray*}
It is clear that $\,^r_q\!\mathfrak{S}_{\widehat{\mathbf{n}}}$ is isomorphic to $\mathfrak{S}_{(\!\,^{^q}\!\!N_r\!)}$.
For large enough $r$ we find the nonnegative integer numbers $m$ and ${\it r\!e\!m}$ such that
\begin{eqnarray}\label{conditions}
\,^{^q}\!\!N_r=N_p\,m+{\it r\!e\!m},~\text{ where }~ \,^{^q}\!\!N_r>N_p~ \text{and}~  {\it r\!e\!m}<N_p.
\end{eqnarray}
Further we denote by $T$  automorphism $O^{N_q}$, where $O$ has been defined in chapter \ref{prelim_1}.

Set $\mathbf{0}=(0,0,\ldots,0,\ldots)\in \mathbb{X}_{\widehat{\mathbf{n}}}$. Then the subsets in the collection $\left\{\,^r_q\!\mathbb{A}_{\mathbf{y}_i} \right\}_{i=0}^{\,^{^q}\!\!N_r-1}$, where $\mathbf{y}_i=\,^r_q\!(T^i(\mathbf{0}))$, are pairwise disjunct and $\bigsqcup\limits_{i=0}^{\,^{^q}\!\!N_r-1}\,^r_q\!\mathbb{A}_{\mathbf{y}_i}=\mathbb{X}_{\widehat{\mathbf{n}}}$. For simplicity of notation, we denote   $\,^r_q\!\mathbb{A}_{\mathbf{y}_i}$ by $\mathbb{B}_i$ and
 define periodic automorphism $\,_q\!T\in \,^r_q\!\mathfrak{S}_{\widehat{\mathbf{n}}}$ as follows
\begin{eqnarray}
\,_q\!T(x)=\left\{
	\begin{array}{rl}
	T\,(x)=O^{N_q}(x),\text{ if } x\in\bigcup\limits_{j=0}^{^{^q}\!\!N_r-2}\mathbb{B}_j;\\
	T^{(1-\,^{^q}\!\!N_r)}x=O^{N_q(1-\,^{^q}\!\!N_r)}x,\text{ if } x\in \mathbb{B}_{(^{^q}\!\!N_r-1)}.
	\end{array}
	\right.
\end{eqnarray}
Put
\begin{eqnarray}\label{set E}
\mathbb{E}_k=\bigcup\limits_{i=km}^{(k+1)m-1}\,\mathbb{B}_i.
\end{eqnarray}
Hence we can conclude that
\begin{eqnarray}\label{measure_of_E_k}
\nu_{_{\widehat{\mathbf{n}}}}\left(\mathbb{E}_k\right)=\frac{m}{\,^{^q}\!\!N_r}=\frac{m}{N_p\,m+{\it r\!e\!m}}=\frac{1}{N_p}-\frac{\it r\!e\!m}{N_p\left(mN_p+{\it r\!e\!m} \right)}
\end{eqnarray}
for all $k\in\overline{0,N_p-1}$.

We now define automorphism $\,_q^{^p}\!T$ as follows
\begin{eqnarray}
\,_q^{^p}\!T\,x=\left\{
	\begin{array}{rl}
	\,_q\!T^m\,x	,\text{ if } x\in\bigcup\limits_{j=0}^{N_p-2}\mathbb{E}_j;\\
	\,_q\!T^{m(1-N_p)},\text{ if } x\in \mathbb{E}_{N_p-1};\\
     x~\text{ if } x\in\mathbb{X}_{\widehat{\mathbf{n}}}\setminus\left(\bigcup\limits_{j=0}^{N_p-1}\mathbb{E}_j \right).
	\end{array}
	\right.
\end{eqnarray}
Hence we obtain that $\,_q^{^p}\!T^{N_p}(x) =x$ for all $x\in\mathbb{X}_{\widehat{\mathbf{n}}}$ and
\begin{eqnarray}
\,_q^{^p}\!T\left(\mathbb{E}_k \right)=\mathbb{E}_{k+1({\rm mod}\,N_p)}
\end{eqnarray}
   We recall that
   \begin{eqnarray}\label{aut_beta}
   (\,_q^{^p}\!T)\,^p\!\mathbb{A}_y  =\,^p\!\mathbb{A}_y ~\text{ for all }~  y\in \prod\limits_{k=1}^p\mathbb{X}_{n_k}.
     \end{eqnarray}
     If $\mathbf{0}_p=\underbrace{(0,\ldots,0)}_{p}$, then the cylindric sets from the collection $\left\{\left({^{^p}_{\scriptscriptstyle{\mathbf{0}}}}\!O\right)^i \,^p\!\mathbb{A}_{\mathbf{0}_p}=\,^p\!\mathbb{A}_{\mathbf{y}_i}\right\}_{i=0}^{N_p-1}$, where $\mathbf{y}_i=\,^r_q\!(T^i(\mathbf{0}))$, are pairwise disjunct and $\bigsqcup\limits_{i=0}^{N_p-1}\,^p\!\mathbb{A}_{\mathbf{y}_i}=\mathbb{X}_{\widehat{\mathbf{n}}}$. Therefore, the following expression
\begin{eqnarray}\label{Theta_involution}
\Theta (x)=\left\{
	\begin{array}{rl}
	\left({^{^p}_{\scriptscriptstyle{\mathbf{0}}}}\!O\right)^{i-j}\,\left(\,_q^{^p}\!T \right)^{j-i} x,&~\text{ if }~ x\in \mathbb{E}_{i}\cap\,^p\!\mathbb{A}_{\mathbf{y}_j};\\
     x,&~\text{ if } x\in\mathbb{X}_{\widehat{\mathbf{n}}}\setminus\left(\bigcup\limits_{j=0}^{N_p-1}\mathbb{E}_j \right)
	\end{array}
	\right.
\end{eqnarray}
define automorphism from $\mathfrak{S}_{N_r}$. An ordinary verification shows that $\Theta^2x=x$ for all $x\in \mathbb{X}_{\widehat{\mathbf{n}}}$.

 Denote by $\,^\mathbb{E}\!\!\left[\,_q^{^p}\!T\right]$ the subgroup, consisting of the automorphisms $\alpha_r\in\,^r_q\!\mathfrak{S}_{\widehat{\mathbf{n}}}$, acting   as follows
 \begin{eqnarray}
 \alpha_r (x)=\left\{
	\begin{array}{rl}
	(\,_q^{^p}\!T)^{k_i} \,(x),&~\text{ if }~ x\in \mathbb{E}_{i};\\
     x,&~\text{ if } x\in\mathbb{X}_{\widehat{\mathbf{n}}}\setminus\left(\bigcup\limits_{j=0}^{N_p-1}\mathbb{E}_j \right),
	\end{array}
	\right.
 \end{eqnarray}
 where $\left(k_0,k_1,\ldots,k_{N_p-1} \right)$ is the collection of the integer numbers, belonging to the set $\left\{0,1,\ldots,N_p-1 \right\}$. Hence, using (\ref{Theta_involution}), we have
 \begin{eqnarray}\label{sigma_r_action}
 \sigma_r(x)=\Theta^{-1}\alpha_r\Theta\,(x)=\left\{
	\begin{array}{rl}
	\left({^{^p}_{\scriptscriptstyle{\mathbf{0}}}}\!O\right)^{k_i}\,(x),&~\text{ if }~ x\in \,^p\!\mathbb{A}_{\mathbf{y}_i}\cap\left(\bigcup\limits_{j=0}^{N_p-1}\mathbb{E}_j \right),\\
     x,&~\text{ if } x\in\mathbb{X}_{\widehat{\mathbf{n}}}\setminus\left(\bigcup\limits_{j=0}^{N_p-1}\mathbb{E}_j \right).
	\end{array}
	\right.
 \end{eqnarray}
 Now we take an arbitrary automorphism $\sigma\in \mathfrak{S}_{N_p}$.
 By lemma \ref{free_action}, for every $\sigma\in\mathfrak{S}_{N_p}$ exists a collection of the nonnegative integers  $\left\{k_i(\sigma) \right\}_{i=0}^{N_p-1}$, where $k_i(\sigma)\in\left\{0,1,\ldots,N_p-1 \right\}$, such that $\sigma(x)=\left({^{^p}_{\scriptscriptstyle{\mathbf{0}}}}\!O\right)^{k_i(\sigma)}\,(x)$ if $x\in\,^p\!\mathbb{A}_{\mathbf{y}_i}$. Since $q>p$, the cylindric sets $\,^r_q\!\mathbb{A}_{\mathbf{y}_i}$ are invariant under the group $\mathfrak{S}_{N_p}$;  i. e. $s\left( ^r_q\!\mathbb{A}_{\mathbf{y}_i}\right)=^r_q\!\mathbb{A}_{\mathbf{y}_i}$ for all $s\in\mathfrak{S}_{N_p}$ and $i=0,1,\ldots, \,^{^q}\!\!N_r-1$. Therefore, automorphism $\sigma_r\in \,\mathfrak{S}_{N_p}\,\cdot\,^r_q\!\mathfrak{S}_{\widehat{\mathbf{n}}}$ from (\ref{sigma_r_action}), where we will write $k_i(\sigma)$ instead $k_i$, is well defined.  If $\rho$ is the metric introduced in (\ref{metr}) then, using (\ref{conditions}), (\ref{measure_of_E_k})  and (\ref{sigma_r_action}), we have
 \begin{eqnarray}
 \rho\left(\sigma,\sigma_r \right)\leq\frac{\it r\!e\!m}{mN_p+{\it r\!e\!m}}\leq\frac{1}{m} , ~\text{ where }~ mN_p=\,^{^q}\!\!N_r-{\it r\!e\!m}, {\it r\!e\!m}<N_p.
 \end{eqnarray}
From the above, we obtain the following statement.
\begin{Prop}
Let the natural numbers $p,q,r$ satisfy the inequality $1<p<q<r$  and the condition (\ref{conditions}). There exist
injective homomorphism $\mathfrak{I}: \mathfrak{S}_{N_p}\rightarrow\,^r_q\!\mathfrak{S}_{\widehat{\mathbf{n}}}$
and automorphism $\Theta\in\,\mathfrak{S}_{N_p}\,\cdot\,^r_q\!\mathfrak{S}_{\widehat{\mathbf{n}}}$ such that
\begin{eqnarray}
\rho\left(\Theta\sigma\Theta^{-1},\mathfrak{I}(\sigma)\right)\leq\frac{1}{m}~\text{ for all }~\sigma\in\mathfrak{S}_{N_p}.
\end{eqnarray}
\end{Prop}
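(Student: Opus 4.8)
The plan is to realize $\mathfrak{I}$ as the encoding map $\sigma\mapsto\alpha_r$ built up before the statement, and then to read off the metric bound from the estimate $\rho(\sigma,\sigma_r)\leq\frac{1}{m}$ already obtained, using only the invariance of $\rho$ under conjugation. Concretely, for $\sigma\in\mathfrak{S}_{N_p}$ let $\left\{k_i(\sigma)\right\}_{i=0}^{N_p-1}$, with $k_i(\sigma)\in\left\{0,1,\ldots,N_p-1\right\}$, be the exponents given by Lemma \ref{free_action}, so that $\sigma(x)=\left({^{^p}_{\scriptscriptstyle{\mathbf{0}}}}\!O\right)^{k_i(\sigma)}(x)$ for $x\in\,^p\!\mathbb{A}_{\mathbf{y}_i}$; I set $\mathfrak{I}(\sigma)$ equal to the automorphism $\alpha_r\in\,^\mathbb{E}\!\!\left[\,_q^{^p}\!T\right]$ whose code is $\left(k_0(\sigma),\ldots,k_{N_p-1}(\sigma)\right)$, i.e. $\mathfrak{I}(\sigma)$ acts on $\mathbb{E}_i$ as $\left(\,_q^{^p}\!T\right)^{k_i(\sigma)}$ and trivially off $\bigcup_j\mathbb{E}_j$. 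Since $\,_q\!T\in\,^r_q\!\mathfrak{S}_{\widehat{\mathbf{n}}}$ and both $\,_q^{^p}\!T$ and $\alpha_r$ are assembled from its powers acting on unions of the $\,^r_q$-cylinder blocks $\mathbb{B}_i$, the image of $\mathfrak{I}$ is contained in $\,^r_q\!\mathfrak{S}_{\widehat{\mathbf{n}}}$.

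The main point to verify is that $\mathfrak{I}$ is an injective homomorphism, and this is where the bulk of the work lies. The idea is that $\left({^{^p}_{\scriptscriptstyle{\mathbf{0}}}}\!O\right)$ and $\,_q^{^p}\!T$ induce the same cyclic permutation $i\mapsto i+1\pmod{N_p}$ of their respective families $\left\{\,^p\!\mathbb{A}_{\mathbf{y}_i}\right\}_{i=0}^{N_p-1}$ and $\left\{\mathbb{E}_i\right\}_{i=0}^{N_p-1}$, and both have order $N_p$ as such permutations. Hence the exponent system transports faithfully: writing $i'=i+k_i(\tau)\pmod{N_p}$, one checks $k_i(\sigma\tau)\equiv k_{i'}(\sigma)+k_i(\tau)\pmod{N_p}$, and because $\left(\,_q^{^p}\!T\right)^{N_p}=\id$ only the residues mod $N_p$ are relevant. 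Composing the block formulas for $\alpha_r$ then yields $\mathfrak{I}(\sigma\tau)=\mathfrak{I}(\sigma)\mathfrak{I}(\tau)$; injectivity follows since $\mathfrak{I}(\sigma)=\id$ forces every $k_i(\sigma)=0$ and hence $\sigma=\id$ by the freeness in Lemma \ref{free_action}.

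For the metric estimate I would invoke the involution $\Theta$ of \eqref{Theta_involution}, which by its very definition is assembled from powers of ${^{^p}_{\scriptscriptstyle{\mathbf{0}}}}\!O\in\mathfrak{S}_{N_p}$ and of $\,_q^{^p}\!T\in\,^r_q\!\mathfrak{S}_{\widehat{\mathbf{n}}}$ and satisfies $\Theta^2=\id$. Formula \eqref{sigma_r_action} says exactly that $\sigma_r=\Theta^{-1}\alpha_r\Theta$, i.e. $\mathfrak{I}(\sigma)=\alpha_r=\Theta\,\sigma_r\,\Theta^{-1}$. Because the metric $\rho$ of \eqref{metr} is invariant under both left and right translations and $\Theta$ is measure preserving, conjugation by $\Theta$ is a $\rho$-isometry, so
\begin{equation*}
\rho\left(\Theta\sigma\Theta^{-1},\mathfrak{I}(\sigma)\right)=\rho\left(\Theta\sigma\Theta^{-1},\Theta\sigma_r\Theta^{-1}\right)=\rho\left(\sigma,\sigma_r\right).
\end{equation*}
The estimate $\rho(\sigma,\sigma_r)\leq\frac{1}{m}$ obtained right before the statement from \eqref{conditions} and \eqref{measure_of_E_k} then gives the claim.

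The chief obstacle is the homomorphism property of $\mathfrak{I}$: one must confirm carefully that the two odometers ${^{^p}_{\scriptscriptstyle{\mathbf{0}}}}\!O$ and $\,_q^{^p}\!T$ act on their respective $N_p$ blocks by identical order-$N_p$ cyclic permutations, so that the additive cocycle governing the exponents $k_i(\cdot)$ is the same in both settings. Once this compatibility is in place, the remaining ingredients — the location of the image in $\,^r_q\!\mathfrak{S}_{\widehat{\mathbf{n}}}$, the identity $\mathfrak{I}(\sigma)=\Theta\sigma_r\Theta^{-1}$, and the passage to the bound — are immediate from the constructions already assembled in the Appendix together with the conjugation-invariance of $\rho$.
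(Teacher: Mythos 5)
Your proposal is correct and follows exactly the paper's route: the paper's ``proof'' is precisely the preceding Appendix construction, with $\mathfrak{I}(\sigma)=\alpha_r$ coded by the exponents $k_i(\sigma)$ from Lemma \ref{free_action}, the identity $\sigma_r=\Theta^{-1}\alpha_r\Theta$ from \eqref{sigma_r_action}, and the bound $\rho(\sigma,\sigma_r)\leq\frac{1}{m}$ transported by the bi-invariance of $\rho$ under the measure-preserving involution $\Theta$. The only difference is that you spell out the cocycle verification $k_i(\sigma\tau)\equiv k_{i+k_i(\tau)}(\sigma)+k_i(\tau)\pmod{N_p}$ making $\mathfrak{I}$ an injective homomorphism, a routine step the paper leaves implicit in ``From the above, we obtain the following statement.''
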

\begin{Co}\label{densety_orbit}
For a fixed $p>1$ take two sequences of the positive integer $q_n$,  $r_n$ such that $p\leq q_n<r_n$  and $\lim\limits_{n\to\infty}q_n=\lim\limits_{n\to\infty}(r_n-q_n)=\infty$. Let $\,^{^{(\!q_n\!)}}\!\!N_{r_n}=N_p\,m_n+{(\it r\!e\!m)_n}$, where $ m_n>1$ and  ${(\it r\!e\!m)_n}<N_p$. Then for each $n$ there exist injective homomorphism $\mathfrak{I}_n:\mathfrak{S}_{N_p}\rightarrow\,^{r_n}_{q_n}\!\,\!\mathfrak{S}_{\widehat{\mathbf{n}}}$ and automorphism $\Theta_n\in\mathfrak{S}_{N_p}\cdot\,^{r_n}_{q_n}\!\,\!\mathfrak{S}_{\widehat{\mathbf{n}}}$ such that
\begin{eqnarray}
\rho\left(\Theta_n\sigma\Theta_n,\mathfrak{I}_n(\sigma)\right)\leq\frac{1}{m_n} ~\text{for all }~ \sigma \in\mathfrak{S}_{N_p}.
\end{eqnarray}
Since $\lim\limits_{n\to\infty}(r_n-q_n)=\infty$, we have $\lim\limits_{n\to\infty}\rho\left(\Theta_n\sigma\Theta_n,\mathfrak{I}_n(\sigma)\right)=0$.
\end{Co}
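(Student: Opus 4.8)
The plan is to obtain the Corollary as a direct, index-by-index application of the preceding Proposition. Fix $p>1$. For every $n$ large enough that $q_n>p$ — which holds eventually, since $\lim_n q_n=\infty$ — the triple $(p,q_n,r_n)$ satisfies $1<p<q_n<r_n$, and the decomposition $\,^{^{(\!q_n\!)}}\!\!N_{r_n}=N_p\,m_n+{(\it r\!e\!m)_n}$ with ${(\it r\!e\!m)_n}<N_p$ and $m_n>1$ is exactly condition \eqref{conditions} for that triple (the inequality $\,^{^{(\!q_n\!)}}\!\!N_{r_n}>N_p$ being guaranteed by $m_n>1$). Hence the Proposition furnishes an injective homomorphism $\mathfrak{I}_n\colon\mathfrak{S}_{N_p}\to\,^{r_n}_{q_n}\!\,\!\mathfrak{S}_{\widehat{\mathbf{n}}}$ and an automorphism $\Theta_n\in\mathfrak{S}_{N_p}\cdot\,^{r_n}_{q_n}\!\,\!\mathfrak{S}_{\widehat{\mathbf{n}}}$ with $\rho\left(\Theta_n\sigma\Theta_n^{-1},\mathfrak{I}_n(\sigma)\right)\le 1/m_n$ for all $\sigma\in\mathfrak{S}_{N_p}$. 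For the finitely many indices with $q_n=p$ one may define $\mathfrak{I}_n$ and $\Theta_n$ arbitrarily, since these do not affect the limit.

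Next I would recall that the automorphism $\Theta_n$ may be taken to be the involution $\Theta$ constructed before the Proposition, for which $\Theta^2=\id$. Then $\Theta_n^{-1}=\Theta_n$, so $\Theta_n\sigma\Theta_n=\Theta_n\sigma\Theta_n^{-1}$, and the bound above becomes precisely the asserted inequality $\rho\left(\Theta_n\sigma\Theta_n,\mathfrak{I}_n(\sigma)\right)\le 1/m_n$.

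It remains to show $m_n\to\infty$, and this is the only place where the hypothesis $\lim_n(r_n-q_n)=\infty$ enters. Since $n_i\ge 2$ for every $i$, one has $\,^{^{(\!q_n\!)}}\!\!N_{r_n}=\prod_{i=q_n+1}^{r_n}n_i\ge 2^{\,r_n-q_n}$, so $\,^{^{(\!q_n\!)}}\!\!N_{r_n}\to\infty$; together with $N_p\,m_n=\,^{^{(\!q_n\!)}}\!\!N_{r_n}-{(\it r\!e\!m)_n}\ge \,^{^{(\!q_n\!)}}\!\!N_{r_n}-N_p$ this forces $m_n\to\infty$ and hence $1/m_n\to 0$. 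Combined with the uniform bound of the first step, this yields $\lim_n\rho\left(\Theta_n\sigma\Theta_n,\mathfrak{I}_n(\sigma)\right)=0$ for each fixed $\sigma\in\mathfrak{S}_{N_p}$, as claimed. The argument is essentially bookkeeping built on top of the Proposition, so no genuine obstacle arises; the only minor points requiring care are the edge case $q_n=p$ (dispatched by passing to large $n$) and the elementary estimate $\,^{^{(\!q_n\!)}}\!\!N_{r_n}\ge 2^{\,r_n-q_n}$ that converts $r_n-q_n\to\infty$ into $m_n\to\infty$.
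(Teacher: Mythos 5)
Your proposal is correct and matches the paper's intended derivation: the Corollary is stated as an immediate consequence of the preceding Proposition applied to each triple $(p,q_n,r_n)$, with $\Theta_n$ the involution $\Theta$ (so $\Theta_n^{-1}=\Theta_n$, explaining the paper's writing $\Theta_n\sigma\Theta_n$) and $1/m_n\to 0$ following from $\,^{^{(\!q_n\!)}}\!\!N_{r_n}\ge 2^{\,r_n-q_n}\to\infty$, exactly as you argue. The only minor refinement: for the indices with $q_n=p$ you need not define $\mathfrak{I}_n,\Theta_n$ arbitrarily (which would forfeit the per-$n$ inequality), since the Proposition's construction only uses $q\ge p$ to ensure the cylinders $\,^{r}_{q}\!\mathbb{A}_{\mathbf{y}_i}$ are $\mathfrak{S}_{N_p}$-invariant, so it applies verbatim when $q_n=p$ as well.
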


\end{document}